\newtheorem*{theorem-intro}{Theorem}
\newtheorem{theorem}{Theorem}[section]
\newtheorem{lemma}[theorem]{Lemma}
\newtheorem{proposition}[theorem]{Proposition}
\newtheorem{corollary}[theorem]{Corollary}
\newtheorem{question}[theorem]{Question}
\theoremstyle{definition}
\newtheorem{remark}[theorem]{Remark}
\newtheorem{example}[theorem]{Example}
\newcommand{\Na}{\mathbb{N}}
\newcommand{\Z}{\mathbb{Z}}
\newcommand{\Co}{\mathbb{C}}
\newcommand{\F}{\mathbb{F}}
\newcommand{\B}{\mathcal{B}}
\newcommand{\M}{\mathrm{M}}
\newcommand{\PSL}{\mathbf{PSL}}
\newcommand{\SL}{\mathbf{SL}}
\newcommand{\GL}{\mathbf{GL}}
\newcommand{\Sp}{\mathbf{Sp}}
\newcommand\Hom{\operatorname{Hom}}
\newcommand\Rad{\operatorname{Rad}}
\newcommand\car{\operatorname{char}}
\newcommand*\sq{\mathbin{\vcenter{\hbox{\rule{0.6ex}{0.6ex}}}}}
\newcommand{\overbarM}[1]{\mkern 0mu\overline{\mkern -4.7mu#1\mkern-1.5mu}\mkern 0mu}
\newcommand{\overbarB}[1]{\mkern 0mu\overline{\mkern -1.5mu#1\mkern-1.5mu}\mkern 0mu}
\newcommand{\overbarK}[1]{\mkern 0mu\overline{\mkern -4.7mu#1\mkern-1.5mu}\mkern 1mu}
\begin{document}

\title[Integral Hopf orders in twists of group algebras]{Existence of integral Hopf orders in twists of group algebras}

\author[J. Cuadra and E. Meir]{Juan Cuadra and Ehud Meir}

\address{J. Cuadra: Universidad de Almer\'{\i}a, Dpto. Matem\'aticas, 04120 Almer\'{\i}a, Spain}
\email{jcdiaz@ual.es}

\address{E. Meir: Institute of Mathematics, University of Aberdeen, Fraser Noble Building, Aberdeen AB24 3UE, United Kingdom}
\email{meirehud@gmail.com}

\thanks{2020 \emph{Mathematics Subject Classification.} 16T05 (primary), 16H10 (secondary)}

\begin{abstract}
We find a group-theoretical condition under which a twist of a group algebra, in Movshev's way, admits an integral Hopf order. Let $K$ be a (large enough) number field with ring of integers $R$. Let $G$ be a finite group and $M$ an abelian subgroup of $G$ of central type. Consider\vspace{-0.75pt} the twist $J$ for $K\hspace{-0.8pt}G$ afforded by a non-degenerate $2$-cocycle on the character group $\widehat{M}$. We show\vspace{-0.75pt} that if there is a Lagrangian decomposition $\widehat{M} \simeq L \times \widehat{L}$ such that $L$ is contained in a normal abelian subgroup $N$ of $G$, then the twisted group algebra $(K\hspace{-0.8pt}G)_J$ admits a Hopf order $X$ over $R$. The Hopf order $X$ is constructed as the $R$-submodule generated by the primitive idempotents of $K\hspace{-1.1pt}N$ and the elements of $G$. It is indeed a Hopf order of $K\hspace{-0.8pt}G$ such that $J^{\pm 1} \in X \otimes_R X$. Furthermore, we give some criteria for this Hopf order to be unique. We illustrate this construction with several families of examples. As an application, we provide a further example of a simple and semisimple complex Hopf algebra that does not admit integral Hopf orders.
\end{abstract}
\maketitle

\section{Introduction}

Orders have played an important role in algebra and number theory for a long time. To contextualize our work, we begin with a brief overview of their development within noncommutative algebra and their connection to Hopf algebras.

\subsection{Noncommutative arithmetic}
In his review \cite{Gu} of Reiner’s monograph \cite{Re}, Gustafson traces the origins of the theory of orders, from the ring of integers in a number field to its noncommutative counterpart. In 1916, Brandt introduced orders in generalized quaternion algebras to study quadratic forms. Soon after, in 1919, Hurwitz published his treatise on the now well-known integral quaternions, \cite[Chapter 11]{Vo}. In 1932, Hasse’s work on maximal orders in division algebras marked a milestone in class field theory: the computation\textemdash{}jointly with Brauer and Noether\textemdash{}of the Brauer group of local and global fields and, ultimately, that of a number field (see \cite{Ro}). Reiner refers to the theory of maximal orders as \emph{''noncommutative arithmetic''}. As outlined in \cite{Gu}, this theory finds applications in topology, geometry, algebraic number theory, ring and module theory, and representation theory. \par \vspace{2pt}

In the representation theory of finite groups, several results illustrate how the arithmetical properties of an order help derive structural features of the algebra containing it. For a finite group $G$, a prominent example of an order is the group ring $\Z G$ within the group algebra $\Co G$. Specifically, $\Z G$ is a subring of $\Co G$, finitely generated as a $\Z$-module, and $\Z G$ spans $\Co G$ as a vector space over $\Co$. These conditions ensure that every element of $\Z G$ is integral, meaning it satisfies a monic polynomial with coefficients in $\Z$. This integrality underpins the proof of Frobenius' Theorem, which states that the degree of every complex irreducible representation of $G$ divides the order of $G$ (see \cite[Proposition 9.32]{CR}). A refinement of this result, due to It\^{o}, asserts that if $N$ is a normal abelian subgroup of $G$, then the degree of every complex irreducible representation of $G$ divides the index of $N$ in $G$.

\subsection{Hopf orders in semisimple Hopf algebras}
Semisimple Hopf algebras, and more broadly, tensor categories, establish a natural framework where the representation theory of finite groups fits and inspires a wealth of deep mathematical problems. Among these, Kaplansky's sixth conjecture (1975) remains open. Frobenius' Theorem gives a key divisibility property for the degrees of irreducible representations of a finite group. Kaplansky's conjecture aims to extend this property to semisimple Hopf algebras (see \cite[Section 5]{So} and \cite[Subsection 4.2]{A}). Concretely, it asserts that, for any complex semisimple Hopf algebra $A$, the dimension of every irreducible representation of $A$ divides the dimension of $A$. \par \vspace{2pt}

When $\Co G$ is endowed with its canonical Hopf algebra structure, $\Z G$ becomes a Hopf order of $\Co G$. First studied by Larson in \cite{L1}, the concept of Hopf order brings a number-theoretic perspective to the study of Hopf algebras. Alongside other tools, it provides a bridge between characteristic $0$ and characteristic $p$, enabling the use of the local-global principle. Applications of this principle appear in \cite[Proposition 20.1 and p. 108]{Ch2}, in the cocommutative setting, and in \cite{EG3}, \cite{CEW1} and \cite{CEW2}, in the quantum group setting. \par \vspace{2pt}

Let $H$ be a finite-dimensional Hopf algebra over a field $K$, and let $R$ be a subring of $K$. A Hopf order of $H$ over $R$ is a Hopf algebra $X$ over $R$, finitely generated and projective as an $R$-module, such that the canonical map $X \otimes_R K \rightarrow H$ is an isomorphism (see Subsection \ref{SHor} for details). Larson \cite[Proposition 4.2]{L1} proved that if a complex semisimple Hopf algebra admits a Hopf order over a number ring, it satisfies Kaplansky's sixth conjecture. This result revisits the core argument of Frobenius' Theorem and raises the question of whether every complex semisimple Hopf algebra admits a Hopf order over a number ring (integral Hopf order). \par \vspace{2pt}

For cocommutative Hopf algebras (group algebras), a well-developed theory of Hopf orders emerged from Larson's seminal papers \cite{L1} and \cite{L2}; see, for example, \cite{Ch2} and \cite{U} and the references therein. The theory of quantum groups emphasizes Hopf algebras that are neither commutative nor cocommutative. However, despite the extensive growth of this field, the study of Hopf orders has received little attention. \par \vspace{2pt}

In \cite{CM1}, we showed that, unlike group algebras, complex semisimple Hopf algebras may not admit integral Hopf orders. This phenomenon was further examined in \cite{CM3} and \cite{CCM}. The semisimple Hopf algebras studied in these works are twisted group algebras of non-abelian groups. Their coalgebra structure and antipode are modified following Movshev's construction, while their algebra structure remains unchanged (see Subsections \ref{Sc:drt} and \ref{Sc:mv} for details). As algebras, they are group algebras and thus satisfy the conjecture. Our results indicate that the approach used to prove Frobenius' Theorem is insufficient to establish Kaplansky's sixth conjecture. Moreover, all the examples analyzed are simple Hopf algebras. This led us to ask in \cite{CM3} whether simplicity is the reason behind the absence of integral Hopf orders. \par \vspace{2pt}

The twisting procedure has a categorical interpretation. In fact, it preserves the representation category of $H$: if $J$ is a twist for $H$, then $\text{Rep}(H)$, is isomorphic, as a tensor category, to $\text{Rep}(H_J)$, the category of representations of the twisted Hopf algebra $H_J$. There is a one-to-one correspondence between the equivalence classes of fiber functors $\text{Rep}(H) \to \text{Vec}_K$ and the equivalence classes of twists of $H$ (see \cite[Section 5.14]{EGNO}). The formulation of Frobenius' Theorem in the framework of tensor categories, together with the existing results in this context (see \cite[Theorem 1.5]{ENO1} and \cite[Section 5]{ENO2}), confirms that Kaplansky's sixth conjecture is a difficult problem. \par \vspace{2pt}

Movshev's method for twisting a group algebra has yielded structural results in Hopf algebra theory. For example, it appears in the classification of triangular and cotriangular Hopf algebras (\cite{AEGN}, \cite{EG4}, and \cite{EOV}), and in the construction of simple and semisimple ones (\cite{N} and \cite{GN}). Against this background, it is natural to ask when a twist of a group algebra admits an integral Hopf order.

\subsection{Subject of this paper}
This paper addresses that question. We shift our focus from the non-existence of integral Hopf orders in twisted group algebras to identifying conditions under which  they do exist, how to construct them, and when they are unique. Our motivation comes from the example of a Hopf order that we introduced in \cite[Proposition 4.1]{CM3} for a twisted group algebra on the symmetric group $S_4$. Most of this paper is devoted to fitting this construction into a general group-theoretical framework.

\subsection{Results and method of proofs}
Let $K$ be a number field with ring of integers $R$. Let $G$ be a finite group and $M$ an abelian\vspace{-0.5pt} subgroup of $G$. Suppose that $K$ is large enough so that $K\hspace{-0.9pt}M$ splits as an algebra. Let $\widehat{M}$ be the character\vspace{1.25pt} group of $M$. Take the set $\{e_{\phi}\}_{\phi \in \widehat{M}}$ of orthogonal primitive idempotents in $K\hspace{-0.9pt}M$ giving the Wedderburn\vspace{-3pt} decomposition.\vspace{-0.6pt} Recall that $e_{\phi} := \frac{1}{\vert M \vert} \sum_{m \in M} \phi(m^{-1}) m.$ If $\omega: \widehat{M} \times \widehat{M} \rightarrow K^{\times}$ is a (normalized) 2-cocycle, then
$$J:=\sum_{\phi,\psi \in \widehat{M}} \omega(\phi,\psi) e_{\phi} \otimes e_{\psi}$$
is a twist for the group algebra $K\hspace{-0.9pt}G$ (see Subsections \ref{Sc:drt} and \ref{Sc:mv}). This algebra can be endowed with a new Hopf algebra structure, denoted by $(K\hspace{-0.9pt}G)_J$, as follows. The algebra structure remains unchanged. The coproduct, counit, and antipode are defined from those of $K\hspace{-0.9pt}G$ in the following way:
$$\Delta_{J}(g)= J\Delta(g) J^{-1}, \hspace{0.7cm} \varepsilon_J(g)=\varepsilon(g), \hspace{0.7cm} S_J(g)=U_J\hspace{1pt}S(g)\hspace{0.5pt}U_J^{-1}, \hspace{0.7cm} \forall g \in G.$$
Here, $U_J:= \sum_{\phi \in \widehat{M}} \hspace{1pt}\omega(\phi,\phi^{-1}) e_{\phi}.$ \par \vspace{2pt}

Our first result stems from the following observation. The cocycle $\omega$ is cohomologous to one with values in roots of unity. We can replace $J$ by a cohomologous twist and extend $K$, if necessary, to ensure that $\omega$ takes values in $R$. Suppose that $M$ is contained in a normal abelian subgroup $N$ of $G$. Extend $K$ again, if necessary, to ensure that $K\hspace{-0.9pt}N$ splits as an algebra. Since $K\hspace{-0.9pt}N$ is commutative, every idempotent of $K\hspace{-0.9pt}M$ is a sum of idempotents of $K\hspace{-0.9pt}N$. Take the full set $\{e^N_{\nu}\}_{\nu \in \widehat{N}}$ of orthogonal primitive idempotents in $K\hspace{-0.9pt}N$ (the superscript $N$ is placed to distinguish\vspace{-0.5pt} these idempotents in different group algebras). Since $N$ is normal,\vspace{1.5pt} $G$ acts on $\widehat{N}$ by $(g \triangleright \nu)(n)=\nu(g^{-1}ng)$ for all $n \in N$. The following rule holds\vspace{-1.5pt} in $K\hspace{-0.8pt} G$: $(e_{\nu}^Ng)(e_{\nu'}^Ng')=e_{\nu}^Ne^N_{g \hspace{0.65pt}\triangleright\hspace{0.65pt} \nu'}gg'$. Thus, the $R$-subalgebra $X$ of $K\hspace{-0.8pt} G$ generated by the set $\{e_{\nu}^Ng: \nu \in \widehat{N}, g\in G\}$ is finitely\vspace{1pt} generated as an $R$-module. One can see\vspace{1pt} that $X$ is a Hopf order of $K\hspace{-0.8pt} G$ by using the formulas:\linebreak $\Delta(e^N_{\nu})=\sum_{\eta \in \widehat{N}} e^N_{\eta} \otimes e^N_{\eta^{-1}\nu}, \hspace{3pt} \varepsilon(e^N_{\nu})=\delta_{\nu,1},$ and $S(e^N_{\nu})=e^N_{\nu^{-1}}$. The idempotents of $K\hspace{-0.9pt}M$ belong\vspace{1pt} to $X$. Hence, $J^{ \pm 1} \in X \otimes_R X$. This implies that $\Delta_J(X) \subseteq X \otimes_R X$, $\varepsilon_J(X) \subseteq R,$ and $S_J(X) \subseteq X$. So, $X$ is a Hopf order of $(K\hspace{-0.9pt}G)_J$ over $R$. \par \vspace{2pt}

The previous idea\vspace{-0.25pt} can be refined through the concept of Lagrangian decomposition. Consider the\vspace{-0.25pt} skew-symmetric pairing \hspace{-1pt}$\B_{\omega}:\widehat{M} \times \widehat{M} \rightarrow K^{\times}$ associated to $\omega$. It is defined as $\B_{\omega}(\phi,\psi) = \omega(\phi,\psi)\omega(\psi,\phi)^{-1}$ for all $\phi,\psi \in \widehat{M}.$ Assume\vspace{1pt} that $\B_{\omega}$ is non-degenerate. We alternatively say that $\omega$ is non-degenerate. It is known that $M$ admits a non-degenerate 2-cocycle if and only if $M$ is of central type. This means that $M \simeq E \times E$ for some abelian group $E$. \par \vspace{2pt}

A subgroup $L$ of $\widehat{M}$ is called Lagrangian if $L=L^{\perp}$, where $\perp$ is taken with respect to $\B_{\omega}$. A Lagrangian subgroup produces a short exact sequence $1 \rightarrow L \rightarrow \widehat{M} \rightarrow \widehat{L} \rightarrow 1.$ If it splits, then $\widehat{M} \simeq L\times \widehat{L}$. Such\vspace{-0.5pt} a decomposition is called a Lagrangian decomposition of $\widehat{M}$. A Lagrangian decomposition\vspace{-0.25pt} always exists and has the following property: writing\vspace{1pt} every element of $\widehat{M}$ as a pair $(l,\lambda)$, with $l \in L,\lambda \in \widehat{L}$, the cocycle $\omega$ is (up to coboundary) given\vspace{1pt} by $\omega((l,\lambda),(l',\lambda')) = \lambda(l')$. Thanks to this property, we can show in Lemma \ref{lemJ} that $J$ and $J^{-1}$ can be expressed as:
$$J^{\pm 1} \hspace{1pt}=\hspace{1pt} \sum_{\lambda \in \widehat{L}} e_{\lambda}^{L} \otimes \lambda^{\pm 1} \hspace{1pt}=\hspace{1pt} \sum_{l \in L} l^{\pm 1} \otimes e_l^{\widehat{L}}.$$
By applying $\widehat{\hspace{5pt}\cdot\hspace{5pt}}$ to $\widehat{M} \simeq L\times \widehat{L}$, we get that $M \simeq \widehat{L} \times  L$. This allows to view $L$ as a subgroup of $M$. \par \vspace{2pt}

Our first main result (Theorem \ref{thm:main1}) is stated as follows:

\begin{theorem-intro}
Let $K$ be a (large enough) number field with ring\vspace{1pt} of integers $R$. Let $G$ be a finite group and $M$ an abelian subgroup of $G$ of central type. Consider the twist \linebreak $J$ in $K\hspace{-1pt}M \otimes K\hspace{-1pt}M$ afforded by a non-degenerate $2$-cocycle $\omega: \widehat{M} \times \widehat{M} \rightarrow K^{\times}$. \par \vspace{2pt}

Fix a Lagrangian decomposition $\widehat{M} \simeq L \times \widehat{L}$. Suppose that $L$ (viewed as inside of $M$) is contained in a normal abelian subgroup $N$ of $G$. Then, $(K\hspace{-0.8pt}G)_J$ admits a Hopf order $X$ over $R$.
\end{theorem-intro}

As before, $X$ is the Hopf order of $K\hspace{-0.8pt} G$ generated by $\{e_{\nu}^Ng: \nu \in \widehat{N}, g\in G\}$ and it satisfies that $J^{ \pm 1} \in X \otimes_R X$. Under the extra hypothesis that the action of $G/N$ on $N$ induced by conjugation is faithful, $X$ can be characterized as the unique Hopf order of $(K\hspace{-0.69pt}G)_J$ containing all the primitive idempotents of $K\hspace{-0.9pt}N$ (Proposition \ref{cond:unique}). \par \vspace{2pt}

We wonder if, up to cohomologous twist, every Hopf order of a twisted group algebra $(K\hspace{-0.69pt} G)_J$ is a Hopf order $X$ of $K\hspace{-0.69pt} G$ such that $J^{\pm 1} \in X \otimes_R X$. All examples of integral Hopf orders in twisted group algebras known so far arise in this form. \par \vspace{2pt}

Our second main result (Theorem \ref{thm:main2}) deals with the uniqueness of the Hopf order constructed above in the case of semidirect products of groups:

\begin{theorem-intro}
Let $K$ be a (large enough) number field with ring of integers $R$. Consider the semidirect product $G := N \rtimes Q$ of two finite groups $N$ and $Q$, with $N$ abelian. Let $L$ and $P$ be abelian subgroups of $N$ and $Q$, respectively. Set $M=LP$. Let $\tau \in Q$. Suppose that $N,Q,L,P,$ and $\tau$ satisfy the following conditions:
\begin{enumerate}
\item[{\it (i)}] $L$ and $P$ are isomorphic and commute with one another. \vspace{2pt}
\item[{\it (ii)}] $Q$ acts on $N$ faithfully. \vspace{2pt}
\item[{\it (iii)}] $N=L \oplus (\tau \cdot L)$, where $N$ is written additively. \vspace{2pt}
\item[{\it (iv)}] $N^{\tau} \neq \{1\}$. \vspace{2pt}
\item[{\it (v)}] $\widehat{N}^{\sigma \tau}=\big(\widehat{N}^{\tau}\big) \cap \big(\widehat{N}^{\sigma \tau \sigma^{-1}}\big)=\{\varepsilon\}$ for every $\sigma \in P$ with $\sigma \neq 1$.
\end{enumerate}
Let $J$ be the twist in $K\hspace{-1pt}M \otimes K\hspace{-1pt}M$ arising from an isomorphism $f:P \rightarrow \widehat{L}$ (see Section \ref{ex-uniq}). Then, the Hopf order of $(K\hspace{-0.9pt}G)_J$ over $R$ generated by the primitive idempotents of $K\hspace{-1.1pt}N$ and the elements of $Q$ is unique.
\end{theorem-intro}

To establish the uniqueness, it is enough to prove that any Hopf order $Y$ of $(K\hspace{-0.9pt}G)_J$ over $R$ contains the idempotent $e^L_{\varepsilon}$ (Propositions \ref{cond:unique} and \ref{squeeze}). This is achieved by obtaining $e^L_{\varepsilon}$ from certain elements of $(K\hspace{-0.9pt}G)_J$ that must belong to  $Y$. Set, for brevity, $H=(K\hspace{-0.9pt}G)_J$. The dual Hopf order $Y^{\star}$ consists of those $\varphi \in H^*$ such that $\varphi(Y) \subseteq R$. Any character of $H$ belongs to $Y^{\star}$ and any cocharacter of $H$ belongs to $Y$ (Proposition \ref{character}). We can construct elements in $Y$ by manipulating characters and cocharacters and by using the operations of Hopf order of $Y$ and $Y^{\star}$ and the evaluation map $Y^{\star} \otimes_R Y \rightarrow R$. Another tool that helps to obtain elements in $Y$ is the following (Proposition \ref{sub}): if $A$ is a Hopf subalgebra of $H$, then $Y \cap A$ is a Hopf order of $A$. We will prove that $e^L_{\varepsilon} \in Y$ by exploiting these facts. \par \vspace{2pt}

The previous strategy is reinforced with the knowledge of the cocharacters of $(K\hspace{-0.9pt}G)_J$. In Proposition \ref{Prop:irr}, we determine the irreducible cocharacters of a general twisted group algebra $(K\hspace{-0.9pt}G)_J$. For any $\tau \in G$, we show in Proposition \ref{vipcharacter} that the element $|M|e^M_{\varepsilon} \tau e^M_{\varepsilon}$ is a cocharacter of $(K\hspace{-0.9pt}G)_J$ when $\omega$ is non-degenerate. \par \vspace{2pt}

Our second theorem is illustrated with a family of examples in which $N=\F_q^{2n},$ $Q=\SL_{2n}(q)$, and $Q$ acts on $N$ in the natural way (see Section \ref{example}). Here, $\F_q$ is the finite field with cardinality $q$. The role of $\SL_{2n}(q)$ can be also played by $\GL_{2n}(q)$ or $\Sp_{2n}(q)$. The subgroup $P$ of $Q$ is defined by means of an algebra homomorphism $\Phi:\F_{q^n}\to \M_n(\F_q)$ induced by the product of $\F_{q^n}$. \par \vspace{2pt}

The group $\F_q^{2n} \rtimes \SL_{2n}(q)$ embeds in $\PSL_{2n+1}(q)$. The twist $J$ for $K(\F_q^{2n} \rtimes \SL_{2n}(q))$ of the previous theorem is also a twist for $K \PSL_{2n+1}(q)$. As an application of our results, we prove in Theorem \ref{PSL2n1} that $(K \PSL_{2n+1}(q))_J$ does not admit a Hopf order over $R$. The Hopf algebra $(\Co \PSL_{2n+1}(q))_J$ provides a further example of simple and semisimple complex Hopf algebra that does not admit a Hopf order over any number ring. \par \vspace{2pt}

\subsection{Organization of the paper} \enlargethispage{\baselineskip}
The remainder of this paper is organized as follows: \par \vspace{2pt}

In Section 2, we recall some background material on Hopf orders, Drinfeld's twist, and Movshev's method of twisting a group algebra. In Section 3, we discuss the coalgebra structure of a twisted group algebra and describe its irreducible cocharacters. \par \vspace{2pt}

In Section 4, we establish our first main theorem, after a preliminary discussion on Lagrangian decompositions. We also characterize the Hopf order constructed here in several ways, and underline, through an example, that our method of construction can produce different Hopf orders. The problem of the uniqueness is tackled in Section 5, where we establish our second main theorem. The above-mentioned families of examples are presented in Section 6. \par \vspace{2pt}

Lastly, Section 7 deals with the non-existence of integral Hopf orders for a twist of the group algebra on $\PSL_{2n+1}(q)$.

\subsection{Acknowledgements}
The work of Juan Cuadra was partially supported by the Spanish Ministry of Science and Innovation, through the grant PID2020-113552GB-I00 (AEI/FEDER, UE), by the Andalusian Ministry of Economy and Knowledge, through the grant P20\underline{ }00770, and by the research group FQM0211. \par \smallskip

The authors would like to thank the referee for their comments and suggestions, which helped improve the motivation and broaden the scope of the introduction.

\section{Preliminaries}

The preliminary material necessary for this paper is the same as that of \cite{CM1}, \cite{CM3}, and \cite{CCM}. For convenience, we briefly collect here the indispensable content and refer the reader to there for further information.

\subsection{Conventions and notation}
We will work over a base field $K$ (mostly a number field). Unless otherwise specified, vector spaces, linear maps, and undecorated tensor products $\otimes$ will be over $K$. Throughout, $H$ will stand for a finite-dimensional Hopf algebra over $K$, with identity element $1_H$, coproduct $\Delta$, counit $\varepsilon$, and antipode $S$. The dual Hopf algebra of $H$ will be denoted by $H^*$. For the general aspects of  the theory of Hopf algebras, our reference books are \cite{Mo} and \cite{Ra}.

\subsection{Hopf orders}\label{SHor} Let $R$ be a subring of $K$ and $V$ a finite-dimensional vector space over $K$. A \emph{lattice of\hspace{1pt} $V$\hspace{-1.5pt} over $R$} is a finitely generated and projective $R$-submodule $X$ of $V$ such that the natural map $X \otimes_R K \rightarrow V$ is an isomorphism. Under this isomorphism, $X$ corresponds to the image of $X \otimes_R R$. \par \vspace{2pt}

A \textit{Hopf order of $H$ over $R$} is a lattice $X$ of $H$ which is closed under the Hopf algebra operations; that is, $1_H \in X$, $XX \subseteq X$, $\Delta(X)\subseteq X\otimes_{R} X$, $\varepsilon(X) \subseteq R,$ and $S(X)\subseteq X$. (For the condition on the coproduct, we use the natural identification of $X\otimes_{R} X$ as an $R$-submodule of $H\otimes H$.) Our reference books for the theory of Hopf orders in group algebras are \cite{Ch2} and \cite{U}. \par \vspace{2pt}

In the next three results, $K$ is assumed to be a number field with ring of integers $R$. Hopf orders are understood to be over $R$.

\begin{lemma}\cite[Lemma 1.1]{CM1} \label{dual}
Let $X$ be a Hopf order of $H$.
\begin{enumerate}
\item[{\it (i)}] The dual lattice $X^{\star}:=\{\varphi \in H^* : \varphi(X) \subseteq R\}$ is a Hopf order of $H^*$. \vspace{2pt}
\item[{\it (ii)}] The natural isomorphism $H \simeq H^{**}$ induces an isomorphism $X \simeq X^{\star \star}$ of Hopf orders. \vspace{2pt}
\end{enumerate}
\end{lemma}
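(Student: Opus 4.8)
The plan is to deduce everything from two standard facts about lattices over the Dedekind domain $R$: first, if $X$ is a lattice of a finite-dimensional $K$-vector space $V$, then $X^{\star}$ is a lattice of $V^*$; and second, under the canonical identification $V \simeq V^{**}$ one has $X^{\star\star} = X$. Both follow by localizing at the nonzero primes of $R$, where the modules become free and the assertions reduce to elementary linear algebra over the fraction field, together with the fact that $X = \bigcap_{\mathfrak p} X_{\mathfrak p}$ inside $V$. A third ingredient I will need is that, since $X$ is finitely generated and projective over $R$, the natural map $X^{\star} \otimes_R X^{\star} \to (X \otimes_R X)^{\star}$ is an isomorphism; so, inside $(H \otimes H)^* = H^* \otimes H^*$, the $R$-submodule $X^{\star} \otimes_R X^{\star}$ is exactly $(X \otimes_R X)^{\star}$.

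Granting these, part (i) is a matter of transporting the Hopf order axioms of $X$ through duality. Recall that the structure maps of $H^*$ are the transposes of those of $H$: the product of $\varphi,\psi \in H^*$ is $(\varphi \otimes \psi)\circ\Delta$, the unit is $\varepsilon$, the coproduct is $m^*$ with $m$ the product of $H$, the counit is evaluation at $1_H$, and the antipode is $S^*$. Then $\varepsilon \in X^{\star}$ because $\varepsilon(X)\subseteq R$; for $\varphi,\psi \in X^{\star}$ one has $(\varphi\otimes\psi)(\Delta(X)) \subseteq (\varphi\otimes\psi)(X\otimes_R X)\subseteq R$, so $\varphi\psi \in X^{\star}$; for $\varphi \in X^{\star}$, evaluating $m^*(\varphi)$ on $X\otimes_R X$ gives $\varphi(XX)\subseteq\varphi(X)\subseteq R$, whence $m^*(\varphi) \in (X\otimes_R X)^{\star} = X^{\star}\otimes_R X^{\star}$; evaluation of $\varphi \in X^{\star}$ at $1_H \in X$ lands in $R$; and $S^*(\varphi)$ on $X$ equals $\varphi(S(X))\subseteq\varphi(X)\subseteq R$. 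Combined with the lattice property, this shows $X^{\star}$ is a Hopf order of $H^*$.

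For part (ii), the canonical isomorphism $H \simeq H^{**}$ sends $h$ to evaluation at $h$. If $h \in X$, then $\varphi(h)\in R$ for every $\varphi \in X^{\star}$, so the image of $h$ lies in $X^{\star\star}$; conversely, biduality of lattices gives $X^{\star\star} = X$ inside $H^{**}\simeq H$, so the image of $X$ is precisely $X^{\star\star}$. Finally, $H \simeq H^{**}$ is an isomorphism of Hopf algebras carrying the Hopf order $X$ onto the Hopf order $X^{\star\star}$, hence it restricts to an isomorphism $X \simeq X^{\star\star}$ of Hopf orders, and no additional compatibility needs to be verified.

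The only genuinely non-formal points are the three lattice-theoretic facts quoted at the start, and among them the identification $X^{\star}\otimes_R X^{\star} = (X\otimes_R X)^{\star}$ is the one to handle with care: it is exactly where finite projectivity of $X$ over $R$ (equivalently, $X$ being finitely generated and torsion-free over the Dedekind ring $R$) is essential, since for a general module the natural map need be neither injective nor surjective. Everything else is bookkeeping with the transpose structure maps of $H^*$.
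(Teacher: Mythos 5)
Your proof is correct and follows the same standard dualization argument as the source the paper cites for this lemma (\cite[Lemma 1.1]{CM1}; the paper itself states the result without reproving it): transpose each Hopf order axiom through the structure maps of $H^*$, use finite projectivity of $X$ over the Dedekind ring $R$ to identify $(X\otimes_R X)^{\star}$ with $X^{\star}\otimes_R X^{\star}$, and invoke biduality of lattices for part (ii). You correctly isolate the tensor--dual identification as the only non-formal ingredient, so there is nothing to add.
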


The importance of the dual order for us ultimately lies in the following:

\begin{proposition}\cite[Proposition 1.2]{CM1} \label{character}
Let $X$ be a Hopf order of $H$. Then:
\begin{enumerate}
\item[{\it (i)}] Every character of $H$ belongs to $X^{\star}$. \vspace{2pt}
\item[{\it (ii)}] Every character of $H^*$ (cocharacter of $H$) belongs to $X$. In particular, $X$ contains every group-like element of $H$.
\end{enumerate}
\end{proposition}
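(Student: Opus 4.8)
The plan is to deduce everything from the fact that an element of a finitely generated $R$-subring of a matrix algebra is integral over $R$, combined with the self-duality of Hopf orders recorded in Lemma \ref{dual}.

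I would begin with (i). Let $\chi$ be the character of a finite-dimensional representation $\rho \colon H \to \mathrm{End}_K(V)$, so that $\chi(h) = \mathrm{tr}\,\rho(h)$. Since $\rho$ is an $R$-linear algebra map and $X$ is finitely generated over $R$, the image $\rho(X)$ is a unital $R$-subalgebra of $\mathrm{End}_K(V)$ that is finitely generated as an $R$-module. Over the Noetherian ring $R$ this forces $R[x]$ to be a finitely generated $R$-module for each $x \in \rho(X)$, hence $x$ is integral over $R$; so the eigenvalues of $x$ in $\overline{K}$ are integral over $R$, and thus so is $\mathrm{tr}\,x$, being their sum. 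As $R$ is the ring of integers of a number field it is integrally closed in $K$, and therefore $\chi(h) = \mathrm{tr}\,\rho(h) \in R$ for all $h \in X$. By $R$-linearity the same holds for any $\Z$-combination of such characters, so every character of $H$ lies in $X^\star = \{\varphi \in H^* : \varphi(X) \subseteq R\}$.

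Part (ii) is then purely formal. By Lemma \ref{dual}(i), $X^\star$ is a Hopf order of $H^*$; applying (i) to the pair $(H^*, X^\star)$ shows that every character of $H^*$, i.e.\ every cocharacter of $H$, belongs to $(X^\star)^\star = X^{\star\star}$, which Lemma \ref{dual}(ii) identifies with $X$ under $H \simeq H^{**}$. For the last assertion, a group-like element $g \in H$ defines the evaluation map $H^* \to K$, $\varphi \mapsto \varphi(g)$, which is an algebra homomorphism because $\Delta(g) = g \otimes g$ and $\varepsilon(g) = 1$; this is precisely the one-dimensional character of $H^*$ whose image in $H^{**} \simeq H$ is $g$, so $g \in X$ by what has just been shown.

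The one step requiring a little care is the implication ``$x$ integral over $R$ $\Rightarrow$ $\mathrm{tr}\,x \in R$'': one passes to eigenvalues in an algebraic closure, notes that the trace is a symmetric polynomial in them and hence integral over $R$, and then uses that $R$ is integrally closed in $K$ to land back in $R$ rather than in some extension. Everything else is bookkeeping; indeed the statement is quoted verbatim from \cite[Proposition 1.2]{CM1}.
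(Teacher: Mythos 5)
Your argument is correct and is essentially the standard one from \cite[Proposition 1.2]{CM1}, which the paper cites rather than reproves: characters take values in $R$ on $X$ because every element of the finitely generated $R$-algebra $\rho(X)$ is integral over the integrally closed ring $R$, and part (ii) follows by dualizing via Lemma \ref{dual}. The only point worth flagging is cosmetic: closure under arbitrary $\Z$-combinations is more than the statement needs, since a character is by definition the trace of an actual representation.
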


We will often use the following technical tool:

\begin{proposition}\cite[Proposition 1.9]{CM1} \label{sub}
Let $X$ be a Hopf order of $H$. If $A$ is a Hopf subalgebra of $H$, then $X\cap A$ is a Hopf order of $A$.
\end{proposition}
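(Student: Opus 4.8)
The plan is to verify directly that $Y := X \cap A$ satisfies each of the defining properties of a Hopf order of $A$ over $R$. Most of these are formal; the real content is the compatibility with the coproduct, and that is the step I expect to be the main obstacle.

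First I would check that $Y$ is a lattice of $A$. Since $R$ is the ring of integers of a number field, it is a Noetherian domain, so the submodule $Y$ of the finitely generated module $X$ is again finitely generated; being contained in the torsion-free module $X$ it is torsion-free, and a finitely generated torsion-free module over a Dedekind domain is projective. For the spanning condition, given $a \in A$, the fact that $X$ is a lattice of $H$ lets us clear denominators to find $r \in R \setminus \{0\}$ with $ra \in X$; as $A$ is a $K$-subspace, $ra \in A$ as well, so $ra \in Y$ and $a \in K \cdot Y$. Hence $K \cdot Y = A$, and since $K$ is flat over $R$ the natural map $Y \otimes_R K \to A$ is an isomorphism. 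Thus $Y$ is a lattice of $A$. Closure under the remaining operations is then immediate: $1_H$ lies in both $X$ and $A$, so $1_H \in Y$; $XX \subseteq X$ and $AA \subseteq A$ give $YY \subseteq Y$; $\varepsilon(X) \subseteq R$ gives $\varepsilon(Y) \subseteq R$; and $S(X) \subseteq X$ together with $S(A) \subseteq A$ gives $S(Y) \subseteq Y$.

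For the coproduct the key point is the identity $(X \otimes_R X) \cap (A \otimes A) = Y \otimes_R Y$ inside $H \otimes H$. One inclusion is clear. For the other, I would use that $X/Y$ embeds into the $K$-vector space $H/A$, hence is torsion-free and therefore projective, so that the inclusion $Y \hookrightarrow X$ splits: $X = Y \oplus X'$ for some $R$-submodule $X'$. Tensoring this decomposition with $K$ gives a direct sum $H = A \oplus A'$ with $A' := X' \otimes_R K$, and this decomposition is compatible with the decomposition of $X \otimes_R X$ into its four summands $Y \otimes_R Y$, $Y \otimes_R X'$, $X' \otimes_R Y$, $X' \otimes_R X'$, which land respectively in $A \otimes A$, $A \otimes A'$, $A' \otimes A$, $A' \otimes A'$. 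An element of $(X \otimes_R X) \cap (A \otimes A)$ must then have its three off-diagonal components zero, so it lies in $Y \otimes_R Y$. Finally, for $y \in Y$ we have $\Delta(y) \in X \otimes_R X$ because $y \in X$, and $\Delta(y) \in A \otimes A$ because $A$ is a Hopf subalgebra; hence $\Delta(y) \in Y \otimes_R Y$. This completes the verification, and the only place where the arithmetic hypothesis on $R$ is genuinely used is in producing the splitting $X = Y \oplus X'$.
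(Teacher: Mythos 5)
The paper does not prove this proposition; it is quoted verbatim from \cite[Proposition 1.9]{CM1}, so there is no in-paper argument to compare against. Your proof is correct and complete: the lattice and closure checks are routine, and the essential point --- that $(X\otimes_R X)\cap(A\otimes A)=(X\cap A)\otimes_R(X\cap A)$ --- is handled properly by observing that $X/(X\cap A)$ embeds in the $K$-vector space $H/A$, hence is torsion-free and projective over the Dedekind domain $R$, so that $X\cap A$ is an $R$-module direct summand of $X$ and the four-block decomposition of $X\otimes_R X$ forces the off-diagonal components of $\Delta(y)$ to vanish. This is essentially the standard argument for this fact.
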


\subsection{Drinfeld twist} \label{Sc:drt}
An invertible element $J:=\sum J^{(1)} \otimes J^{(2)}$ in $H \otimes H$ is called a {\it twist} for $H$ provided that:
$$\begin{array}{c}
(1_H \otimes J)(id \otimes \Delta)(J)=(J \otimes 1_H)(\Delta \otimes id)(J), \quad \textrm{and} \vspace{4pt} \\
(\varepsilon \otimes id)(J)=(id  \otimes \varepsilon)(J)=1_H.
\end{array}$$
The {\it Drinfeld twist} of $H$ is the new Hopf algebra $H_J$ constructed as follows: $H_J=H$ as an algebra, the counit is that of $H$, and the coproduct and antipode are defined by:
$$\Delta_J(h)=J \Delta(h)J^{-1} \qquad \textrm{and} \qquad S_J(h)=U_J\hspace{1pt}S(h)\hspace{0.5pt}U_J^{-1} \qquad \forall h \in H.$$
Here, $U_J:=\sum J^{(1)}S(J^{(2)})$. Writing $J^{-1}=\sum J^{-(1)} \otimes J^{-(2)}$, we have that $U_J^{-1}=\sum S(J^{-(1)})J^{-(2)}.$ \par \vspace{2pt}

We stress that if $A$ is a Hopf subalgebra of $H$ and $J$ is a twist for $A$, then $J$ is a twist for $H$. \par \vspace{2pt}

Our main results will rely on the following fact, which is easy to prove:

\begin{proposition}\cite[Proposition 2.4]{CM3} \label{twistorder}
Let $H$ be a Hopf algebra over $K$ and $J$ a twist for $H$. Let $R$ be a subring of $K$ and $X$ a Hopf order of $H$ over $R$. Assume that $J$ and $J^{-1}$ belong to $X \otimes_R X$. Then, $X$ is a Hopf order of $H_J$ over $R$.
\end{proposition}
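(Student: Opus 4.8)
The plan is to verify the five defining conditions of a Hopf order directly, exploiting that $H_J$ and $H$ coincide as algebras and that $X\otimes_R X$, regarded as an $R$-submodule of $H\otimes H$, is a subalgebra. As a lattice there is nothing to check: the underlying $R$-module of $X$ is unchanged, so $X$ is still a finitely generated projective $R$-submodule of $H_J=H$ for which $X\otimes_R K\rightarrow H_J$ is an isomorphism. Since the multiplication and unit of $H_J$ are those of $H$, the conditions $1_{H_J}=1_H\in X$ and $XX\subseteq X$ hold automatically, and since $\varepsilon_J=\varepsilon$ we get $\varepsilon_J(X)=\varepsilon(X)\subseteq R$ at once.

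It then remains to handle $\Delta_J$ and $S_J$. For the coproduct, recall that $\Delta_J(x)=J\,\Delta(x)\,J^{-1}$ for $x\in X$. Here one uses that $X\otimes_R X$ contains $1_H\otimes 1_H$ and is closed under the product $(a\otimes b)(c\otimes d)=ac\otimes bd$ because $XX\subseteq X$; hence it is a subalgebra of $H\otimes H$. As $\Delta(x)\in X\otimes_R X$ by the hypothesis that $X$ is a Hopf order of $H$, and $J,J^{-1}\in X\otimes_R X$ by assumption, the product $J\,\Delta(x)\,J^{-1}$ lies in $X\otimes_R X$. Thus $\Delta_J(X)\subseteq X\otimes_R X$.

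For the antipode, the key point is that the elements $U_J$ and $U_J^{-1}$ both lie in $X$. Indeed, $U_J=\sum J^{(1)}S(J^{(2)})=m\big((\mathrm{id}\otimes S)(J)\big)$; since $S(X)\subseteq X$, the map $\mathrm{id}\otimes S$ sends $X\otimes_R X$ into $X\otimes_R X$, and the multiplication $m$ sends $X\otimes_R X$ into $X$, so $U_J\in X$. The same reasoning applied to $J^{-1}$, via $U_J^{-1}=\sum S(J^{-(1)})J^{-(2)}$, gives $U_J^{-1}\in X$. Consequently, for every $x\in X$ we have $S_J(x)=U_J\,S(x)\,U_J^{-1}\in X\cdot X\cdot X\subseteq X$, using $S(X)\subseteq X$ once more. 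This completes the verification that $X$ is a Hopf order of $H_J$ over $R$.

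I do not anticipate a genuine obstacle: the statement is essentially bookkeeping, and the only step needing a moment's care is checking that $U_J$ and $U_J^{-1}$ lie in $X$ rather than merely in $H$. This is precisely where the hypothesis $J^{\pm 1}\in X\otimes_R X$ is used beyond what the coproduct condition already demands, and it hinges on $X$ being closed under both $S$ and multiplication.
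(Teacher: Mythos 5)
Your proof is correct and follows essentially the same (standard) argument that the paper relies on: the paper cites this as \cite[Proposition 2.4]{CM3} and calls it ``easy to prove,'' and the sketch in its introduction is exactly your verification that $\Delta_J(x)=J\Delta(x)J^{-1}$ stays in the subalgebra $X\otimes_R X$ and that $U_J^{\pm 1}\in X$ forces $S_J(X)\subseteq X$. Nothing is missing.
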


In a similar fashion, we denote by $X_J$ the Drinfeld twist of $X$.

\subsection{Construction of twists for group algebras} \label{Sc:mv}
Movshev devised in \cite{Mv} the following ingenious method of constructing twists for a group algebra. Let $M$ be a finite abelian group. Assume\vspace{-1pt} that $\car K \nmid \vert M \vert$ and that $K$ is large enough for the group algebra $K\hspace{-0.9pt}M$ to split. Consider\vspace{1pt} the character group $\widehat{M}$ of $M$. For $\phi \in  \widehat{M}$, the primitive idempotent of $K\hspace{-0.9pt}M$ corresponding to $\phi$ is
$$e_{\phi} := \frac{1}{\vert M \vert } \sum_{m \in M} \phi(m^{-1}) m.$$
If $\omega: \widehat{M} \times \widehat{M} \rightarrow K^{\times}$ is a normalized 2-cocycle, then
$$J_{M, \omega}:=\sum_{\phi,\psi \in \widehat{M}} \omega(\phi,\psi) e_{\phi} \otimes e_{\psi}$$
is a twist for $K\hspace{-0.9pt}M$. \par \vspace{2pt}

Suppose now that $G$ is a finite group and $M$ is an abelian subgroup of $G$. Then, $K\hspace{-0.9pt}M$ is a Hopf subalgebra of $K\hspace{-0.9pt}G$ and, consequently, $J_{M, \omega}$ is a twist for $K\hspace{-0.9pt}G$. (It is pertinent to mention that not all twists of $K\hspace{-0.9pt}G$ arise in this way, see \cite{EG2}.) In the sequel, we will omit the subscripts in the twist and simply write $J$.

\section{A distinguished cocharacter}
\enlargethispage{\baselineskip}

Let $\tau \in G$. The aim of this section is to prove that the element $|M|e_{\varepsilon} \tau e_{\varepsilon}$ is a cocharacter of $(K\hspace{-0.9pt}G)_J$ and, hence, it is contained in every Hopf order of $(K\hspace{-0.9pt}G)_J$. To achieve this, some preparations are necessary.

\subsection{Irreducible representations of twisted group algebras over abelian groups}
Let $\hspace{3pt}\overbarK{K}\hspace{0pt}$ denote the algebraic closure of the number field $K$. Consider the following two abelian groups (see \cite[Section 2.1, p. 31]{Ka} and \cite[Section 1.2, p. 18]{Ka2} for the definitions):
\begin{enumerate}
\item[$\sq$] $H^2(M,\hspace{3pt}\overbarK{K}^{\times})$, the second cohomology group of $M$ with values in $\hspace{3pt}\overbarK{K}^{\times}\hspace{0pt}$. \vspace{2pt}
\item[$\sq$] $P_{sk}(M,\hspace{3pt}\overbarK{K}^{\times})$, the group of skew-symmetric pairings of $M$ with values in $\hspace{3pt}\overbarK{K}^{\times}\hspace{0pt}$.
\end{enumerate}
Every $2$-cocycle $\omega$ gives rise to a skew-symmetric pairing $\B_{\omega}$ defined by
$$\B_{\omega}(m,m') = \omega(m,m')\omega(m',m)^{-1} \hspace{8pt} \forall m,m' \in M,$$
and $\B_{\omega}$ depends only on the cohomology class of $\omega$. We know from \cite[Lemma 2.2, p. 19, and Theorem 3.6, p. 31]{Ka2} that the map
$$\B:H^2(M,\hspace{3pt}\overbarK{K}^{\times}) \rightarrow P_{sk}(M,\hspace{3pt}\overbarK{K}^{\times}), [\omega] \mapsto \B_{\omega},$$
is an isomorphism of abelian groups. \par \vspace{2pt}

The \emph{radical} of $\omega$ is defined to be the radical of the pairing $\B_{\omega}$. That is, $$\Rad(\omega)=\{m \in M : \omega(m,m')=\omega(m',m) \hspace{5pt} \forall m' \in M\}.$$
Clearly, $\Rad(\omega)$ is a subgroup of $M$. \par \vspace{2pt}

Recall that $\omega$ is said to be \emph{non-degenerate} if the pairing $\B_{\omega}$ is so; equivalently, if $\Rad(\omega)=\{1\}$. Being non-degenerate is a property preserved under multiplication by coboundaries. Let $\pi:M \rightarrow M/\Rad(\omega)$ denote the canonical projection. The pairing $\mathcal{B}_{\omega}$ on $M$ induces a skew-symmetric pairing $\overbarB{\mathcal{B}}_{\omega}$ on $M/\Rad(\omega)$ such that $\mathcal{B}_{\omega}=\overbarB{\mathcal{B}}_{\omega} \circ (\pi \times \pi)$. By construction, $\overbarB{\mathcal{B}}_{\omega}$ is non-degenerate. Then, there is a 2-cocycle $\bar{\omega}$ on $M/\Rad(\omega)$ such that $\B_{\bar{\omega}}=\overbarB{\mathcal{B}}_{\omega}.$ The cohomology class of $\omega$ satisfies $[\omega] = [\bar{\omega} \circ (\pi \times \pi)]$. This shows that, up to coboundary, any $2$-cocycle $\omega$ on $M$ is inflated from a unique non-degenerate $2$-cocycle $\bar{\omega}$ on $M/\Rad(\omega)$. \par \vspace{2pt}

On the other hand, by \cite[Proposition 2.1.1, p. 14]{Ka1} any $2$-cocycle on a finite group with values in $\hspace{3pt}\overbarK{K}^{\times}\hspace{0pt}$ is cohomologous to a cocycle with values in a cyclotomic ring of integers. So, if one starts with a $2$-cocycle $\omega$ on $M$ with values in $K^{\times}$, the process of inflating from $M/\Rad(\omega)$ described before can be indeed achieved in a cyclotomic field extension of $K$. \par \vspace{2pt}

Consider now the twisted group algebra $K^{\omega}[M]=\oplus_{m \in M} K\hspace{-1pt} u_m$, where the product is given by $u_m u_{m'}=\omega(m,m')u_{mm'}$. Assume that $K$ is large enough so that $K^{\omega}[M]$ splits as an algebra. The center of $K^{\omega}[M]$ is spanned, as a vector space, by the set $\{u_m: m \in \Rad(\omega)\}$. Write, for short, $\overbarM{M}=M/\Rad(\omega)$. Suppose that $\omega$ is inflated from a non-degenerate $2$-cocycle $\bar{\omega}$ on \hspace{2pt}$\overbarM{M}$ (we extend $K$ to achieve this if necessary). Then, the twisted group algebra $K^{\bar{\omega}}[\hspace{3.5pt}\overbarM{M}\hspace{2pt}]$ is a matrix algebra and there is surjective algebra map $K^{\omega}[M] \twoheadrightarrow K^{\bar{\omega}}[\hspace{3.5pt}\overbarM{M}\hspace{2pt}]$. Let $V$ be the unique (up to isomorphism) irreducible representation of $K^{\bar{\omega}}[\hspace{3.5pt}\overbarM{M}\hspace{2pt}]$. By inflation, $V$ is also an irreducible representation of $K^{\omega}[M].$ \par \vspace{2pt}

The following lemma can be easily proved:

\begin{lemma}\label{irrtwgpal}
Let $\widehat{M}$ denote the group of characters of $M$ with values in $K^{\times}$. For every $\phi \in \widehat{M}$, let $V_{\phi}$ be the representation of $K^{\omega}[M]$ which is equal to $V$ as a\vspace{1pt} vector space and whose action is defined by $u_m \hspace{1.5pt} \scalebox{0.82}{$\odot$}\hspace{1.5pt} v = \phi(m) (u_m \cdot v).$ Then:
\begin{enumerate}
\item[{\it (i)}] $V_{\phi}$ is an irreducible representation of $K^{\omega}[M]$. \vspace{2pt}
\item[{\it (ii)}] Every irreducible representation of $K^{\omega}[M]$ is isomorphic to $V_{\phi}$ for some \linebreak $\phi \in \widehat{M}$. \vspace{2pt}
\item[{\it (iii)}] $V_{\phi} \simeq V_{\psi}$ if and only if $\phi \vert_{\Rad(\omega)}=\psi \vert_{\Rad(\omega)}$. \vspace{2pt}
\item[{\it (iv)}] The irreducible representations of $K^{\omega}[M]$ are in one-to-one correspondence with the irreducible representations of $\Rad(\omega)$.
\item[{\it (v)}] The dimension of every irreducible representation of $K^{\omega}[M]$ equals $\sqrt{\hspace{-0.5pt}\big\vert\hspace{-0.5pt}\frac{M}{\Rad(\omega)}\hspace{-0.5pt}\big\vert\hspace{-0.5pt}}$.
\item[{\it (vi)}] The character $\chi_{\phi}:K^{\omega}[M] \rightarrow K$ afforded by $V_{\phi}$ is given by:
$$\chi_{\phi}(u_m) =
\begin{cases}
\sqrt{\big\vert\frac{M}{\Rad(\omega)}\big\vert}\hspace{2pt} \phi(m) & \text{ if } m \in \Rad(\omega), \\
\hspace{1cm}0 & \text{ otherwise. }
\end{cases}$$
\end{enumerate}
\end{lemma}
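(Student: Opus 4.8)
The plan is to reduce everything to two well-known facts: that a twisted group algebra $K^{\bar\omega}[\overbarM{M}]$ with $\bar\omega$ non-degenerate on a finite abelian group is a single matrix algebra $\M_d(K)$ with $d = \sqrt{|\overbarM{M}|}$, and that twisting the module structure by a linear character of $M$ produces another module of the same dimension. Everything in the lemma is then bookkeeping around the short exact sequence $1 \to \Rad(\omega) \to M \to \overbarM{M} \to 1$. Concretely, I would first check that $V_\phi$ as defined is genuinely a $K^\omega[M]$-module: the required identity $(u_m\,\scalebox{0.82}{$\odot$})(u_{m'}\,\scalebox{0.82}{$\odot$}) = \omega(m,m')\,(u_{mm'}\,\scalebox{0.82}{$\odot$})$ follows because $\phi(m)\phi(m') = \phi(mm')$ is multiplicative, so the cocycle scalar is untouched. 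Irreducibility (part (i)) is immediate since the underlying vector space and the $K$-span of the operators $\{u_m\,\scalebox{0.82}{$\odot$}\}$ agree with those of $V$ up to nonzero rescaling, so $V_\phi$ is irreducible iff $V$ is; and $V$ is irreducible because it is inflated from the unique simple module of the matrix algebra $K^{\bar\omega}[\overbarM{M}]$.

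For part (ii), I would count: $K^\omega[M]$ is semisimple (since $\car K \nmid |M|$), and its center is spanned by $\{u_m : m \in \Rad(\omega)\}$ — this is the standard computation $u_{m_0}u_m = \omega(m_0,m)\omega(m,m_0)^{-1} u_m u_{m_0} = \B_\omega(m_0,m)\, u_m u_{m_0}$, which for all $m$ forces $m_0 \in \Rad(\omega)$, together with the fact that such $u_{m_0}$ do commute with the $u_m$ up to the \emph{normalized} cocycle correction (one checks the central elements are suitable scalar multiples of the $u_{m_0}$, or passes to a cohomologous representative making $\Rad(\omega)$-elements genuinely central). Hence $\dim Z(K^\omega[M]) = |\Rad(\omega)|$, so $K^\omega[M]$ has exactly $|\Rad(\omega)| = |\widehat{\Rad(\omega)}|$ isomorphism classes of simple modules. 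The map $\phi \mapsto V_\phi$ factors through restriction $\phi \mapsto \phi|_{\Rad(\omega)}$ because if $\phi,\psi$ agree on $\Rad(\omega)$ then $\phi\psi^{-1}$ is inflated from $\overbarM{M}$, and twisting $V$ by a character inflated from $\overbarM{M}$ gives back $V$ (up to isomorphism) by uniqueness of the simple module of the matrix algebra $K^{\bar\omega}[\overbarM{M}]$. So we get a surjection from $\widehat{\Rad(\omega)}$ onto the set of iso classes; comparing cardinalities shows it is a bijection, which simultaneously proves (ii), (iii), and (iv). Then (v) is just $\dim V_\phi = \dim V = \sqrt{|\overbarM{M}|} = \sqrt{|M/\Rad(\omega)|}$.

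For part (vi): on $\Rad(\omega)$, after choosing a cohomologous representative of $\omega$ that is trivial on $\Rad(\omega) \times \Rad(\omega)$ (or tracking the coboundary explicitly), each $u_{m}$ with $m \in \Rad(\omega)$ acts on $V$ as a scalar — it is central, and $V$ is a faithful simple module over $\M_d(K)$, so a central element acts by a scalar $c_m$ with $c_m^{|\Rad(\omega)|}$ a root of unity; identifying this character of $\Rad(\omega)$ and noting the $\phi$-twist multiplies it by $\phi$, one gets $\chi_\phi(u_m) = d\cdot\phi(m)\cdot(\text{that scalar})$, which after the normalization becomes $\sqrt{|M/\Rad(\omega)|}\,\phi(m)$. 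For $m \notin \Rad(\omega)$, the image of $u_m$ in the matrix algebra $K^{\bar\omega}[\overbarM{M}]$ is a nonzero element $u_{\bar m}$ with $\bar m \neq 1$, and in a matrix algebra $K^{\bar\omega}[\overbarM{M}]$ with non-degenerate cocycle one has $\operatorname{tr}(u_{\bar m}) = 0$ for $\bar m \neq 1$ — the quick argument being that there exists $\bar m'$ with $\B_{\bar\omega}(\bar m', \bar m) \neq 1$, whence $u_{\bar m'} u_{\bar m} u_{\bar m'}^{-1} = \B_{\bar\omega}(\bar m',\bar m)\, u_{\bar m}$ and taking traces forces $\operatorname{tr}(u_{\bar m}) = 0$. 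Twisting by $\phi$ only rescales by $\phi(m) \neq 0$, so $\chi_\phi(u_m) = 0$.

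The main obstacle, and the only place that needs real care rather than invocation, is the normalization bookkeeping around $\Rad(\omega)$: the cocycle $\omega$ is normalized but need not be trivial on $\Rad(\omega) \times \Rad(\omega)$, so the $u_m$ for $m \in \Rad(\omega)$ span the center but are not literally a copy of the group algebra $K[\Rad(\omega)]$ inside it. One must either fix a cohomologous representative trivial on $\Rad(\omega)$ (possible since $H^2(\Rad(\omega) \hookrightarrow M)$ restriction, combined with the fact that $\omega|_{\Rad(\omega)}$ is symmetric hence a coboundary on the abelian group $\Rad(\omega)$ after enlarging $K$), or carry the resulting scalar correction through parts (iii), (iv), and (vi). Once that is pinned down, the rest is the routine comparison of dimensions and the standard matrix-algebra trace vanishing described above.
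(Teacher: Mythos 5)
The paper offers no proof of this lemma (it is introduced with ``can be easily proved''), so there is nothing to compare against; your argument is the standard one and is correct in substance. Two remarks. First, the normalization issue you isolate at the end is already discharged by the paper's standing hypothesis, stated just before the lemma, that $\omega$ is inflated from a non-degenerate cocycle $\bar{\omega}$ on $\overbarM{M}=M/\Rad(\omega)$: under that hypothesis $\omega$ is literally trivial on $\Rad(\omega)\times\Rad(\omega)$, each $u_m$ with $m\in\Rad(\omega)$ maps to $u_{\bar{1}}=1$ in $K^{\bar{\omega}}[\hspace{2pt}\overbarM{M}\hspace{1pt}]$ and hence acts as the identity on $V$, so the scalar you call $c_m$ equals $1$ and the formula in (vi) comes out on the nose with no coboundary bookkeeping. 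Second, in your treatment of (ii)--(iv) you assert that $\phi\mapsto V_\phi$ induces a \emph{surjection} from $\widehat{\Rad(\omega)}$ onto the set of isomorphism classes of simples and then ``compare cardinalities''; what you have actually established at that point is only well-definedness (the map factors through restriction), and well-definedness plus equal cardinalities does not yield a bijection. The missing half is injectivity, which you in fact possess implicitly: since $u_m$ acts on $V_\phi$ by the scalar $\phi(m)$ for $m\in\Rad(\omega)$, the central character (equivalently, the value $\chi_\phi(u_m)$ from (vi)) recovers $\phi\vert_{\Rad(\omega)}$, so $V_\phi\simeq V_\psi$ forces $\phi\vert_{\Rad(\omega)}=\psi\vert_{\Rad(\omega)}$. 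With injectivity in hand, the count $\#\{\text{simples}\}=\dim Z(K^{\omega}[M])=\vert\Rad(\omega)\vert=\vert\widehat{\Rad(\omega)}\vert$ gives surjectivity, and (ii), (iii), (iv) all follow. The remainder of your argument --- the module-axiom check, irreducibility of $V_\phi$, the dimension count in (v), and the conjugation trick $u_{m'}u_mu_{m'}^{-1}=\B_{\omega}(m',m)u_m$ forcing $\chi_\phi(u_m)=0$ for $m\notin\Rad(\omega)$ --- is correct as written.
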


\subsection{Irreducible cocharacters of $(K\hspace{-0.9pt}G)_J$} The Hopf algebra $(K\hspace{-0.9pt}G)_J$ is cosemisimple by \cite[Corollary 3.6]{AEGN}. The irreducible corepresentations of $(K\hspace{-0.9pt}G)_J$ were determined by Etingof and Gelaki in \cite[Section 3]{EG1}. The following result is \cite[Proposition 2.1]{CM3}. It reinterprets in our setting and summarizes \cite[Propositions 3.1, 4.1, and 4.2]{EG1}.

\begin{proposition}\label{decomp}
Let $\{\tau_{\ell}\}_{\ell=1}^n$ be a set of representatives of the double cosets of $M$ in $G$. Then:
\begin{enumerate}
\item[{\it (i)}] As a coalgebra, $(K\hspace{-0.9pt}G)_J$ decomposes as the direct sum of subcoalgebras
\begin{equation}\label{decompKGJ}
(K\hspace{-0.9pt}G)_J = \bigoplus_{\ell=1}^n K(M\tau_{\ell} M). \vspace{2pt}
\end{equation}

\item[{\it (ii)}] The subcoalgebra $K(M\tau_{\ell} M)$ has a basis given by $\{e_{\phi}\tau_{\ell}e_{\psi}\}_{(\phi,\psi)\in N_{\tau_{\ell}}}$, where \vspace{1pt}
$$\hspace{1cm} N_{\tau_{\ell}}=\big\{(\phi,\psi) \in \widehat{M}\times\widehat{M} \ : \ \psi(m)=\phi(\tau_{\ell}^{-1}m\tau_{\ell}) \hspace{7pt} \forall m \in M\cap (\tau_{\ell} M \tau_{\ell}^{-1}) \big\}.\vspace{1pt}$$
(Notice that if $M \cap (\tau_{\ell} M \tau_{\ell}^{-1})=\{1\}$, then $N_{\tau_{\ell}}=\widehat{M} \times\widehat{M}$.) \vspace{6pt}

\item[{\it (iii)}] The dual algebra of $K(M\tau_{\ell} M)$ is isomorphic to the twisted group algebra $K^{(\omega,\omega^{-1})\vert _{N_{\tau_{\ell}}}}[N_{\tau_{\ell}}]$.
\end{enumerate}
\end{proposition}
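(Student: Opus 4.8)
The plan is to obtain all three items directly from the explicit form of the twist $J\in K\hspace{-1pt}M\otimes K\hspace{-1pt}M$, rather than quoting the corepresentation theory of \cite{EG1}, using throughout the orthogonality $e_\phi e_\psi=\delta_{\phi,\psi}e_\phi$, the formula $\Delta(e_\phi)=\sum_{\eta\in\widehat M}e_\eta\otimes e_{\eta^{-1}\phi}$, and $J^{-1}=\sum_{\phi,\psi}\omega(\phi,\psi)^{-1}e_\phi\otimes e_\psi$. For the coalgebra decomposition in \textit{(i)}, I would note that since $J$ and $J^{-1}$ lie in $K\hspace{-1pt}M\otimes K\hspace{-1pt}M$, for $g\in G$ the element $\Delta_J(g)=J(g\otimes g)J^{-1}$ is a $K$-combination of tensors $(m_1gm_2)\otimes(m_3gm_4)$ with $m_i\in M$, and every factor $m_igm_j$ lies in the double coset $MgM$; hence $\Delta_J(g)\in K(MgM)\otimes K(MgM)$. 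Since $\varepsilon_J=\varepsilon$ and the double cosets partition $G$, extending linearly shows that $K\hspace{-0.9pt}G=\bigoplus_\ell K(M\tau_\ell M)$ is a decomposition into subcoalgebras, which is \eqref{decompKGJ}.

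For \textit{(ii)}, the set $\{e_\phi\tau_\ell e_\psi:\phi,\psi\in\widehat M\}$ clearly spans $(K\hspace{-1pt}M)\tau_\ell(K\hspace{-1pt}M)=K(M\tau_\ell M)$, so it suffices to determine which of these elements vanish and then count. I would factor $e_\phi\tau_\ell e_\psi=e_\phi\cdot(\tau_\ell e_\psi\tau_\ell^{-1})\cdot\tau_\ell$ and observe that $\tau_\ell e_\psi\tau_\ell^{-1}$ is the primitive idempotent of $K(\tau_\ell M\tau_\ell^{-1})$ attached to the character $n\mapsto\psi(\tau_\ell^{-1}n\tau_\ell)$. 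Both $e_\phi$ and $\tau_\ell e_\psi\tau_\ell^{-1}$ lie in group algebras of subgroups of $G$ containing $D_\ell:=M\cap(\tau_\ell M\tau_\ell^{-1})$; comparing their components in $K\hspace{-1pt}D_\ell$ shows that their product is zero unless the two characters agree on $D_\ell$, which is precisely the condition $(\phi,\psi)\in N_{\tau_\ell}$. Hence $e_\phi\tau_\ell e_\psi=0$ for $(\phi,\psi)\notin N_{\tau_\ell}$, and the remaining elements still span $K(M\tau_\ell M)$. Since $N_{\tau_\ell}$ is the kernel of a surjection $\widehat M\times\widehat M\twoheadrightarrow\widehat{D_\ell}$, one has $|N_{\tau_\ell}|=|M|^2/|D_\ell|=|M\tau_\ell M|=\dim_K K(M\tau_\ell M)$; a spanning set of that cardinality is a basis, and in particular each $e_\phi\tau_\ell e_\psi$ with $(\phi,\psi)\in N_{\tau_\ell}$ is nonzero.

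For \textit{(iii)}, I would compute $\Delta_J$ on the basis found in \textit{(ii)}. From $\Delta(e_\phi)\Delta(\tau_\ell)\Delta(e_\psi)$ one obtains $\Delta(e_\phi\tau_\ell e_\psi)=\sum_{\eta,\zeta}(e_\eta\tau_\ell e_\zeta)\otimes(e_{\eta^{-1}\phi}\tau_\ell e_{\zeta^{-1}\psi})$, and conjugating by $J$ on the left and $J^{-1}$ on the right collapses all idempotent products (because $e_\alpha e_\eta=\delta_{\alpha,\eta}e_\eta$) and pulls out the scalars $\omega(\eta,\eta^{-1}\phi)$ and $\omega(\zeta,\zeta^{-1}\psi)^{-1}$:
$$\Delta_J(e_\phi\tau_\ell e_\psi)=\sum_{(\eta,\zeta)\in N_{\tau_\ell}}\omega(\eta,\eta^{-1}\phi)\,\omega(\zeta,\zeta^{-1}\psi)^{-1}\,(e_\eta\tau_\ell e_\zeta)\otimes(e_{\eta^{-1}\phi}\tau_\ell e_{\zeta^{-1}\psi}),$$
the sum running over $N_{\tau_\ell}$ since this is a subgroup of $\widehat M\times\widehat M$ containing $(\phi,\psi)$ and the remaining terms vanish by \textit{(ii)}. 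Dualizing with respect to the basis $\{e_\phi\tau_\ell e_\psi\}$ of $K(M\tau_\ell M)$ then turns this coproduct into the multiplication $\xi_{(\eta,\zeta)}\xi_{(\eta',\zeta')}=\omega(\eta,\eta')\,\omega(\zeta,\zeta')^{-1}\,\xi_{(\eta\eta',\zeta\zeta')}$ on the dual basis, while $\varepsilon_J(e_\phi\tau_\ell e_\psi)=\delta_{\phi,1}\delta_{\psi,1}$ identifies the unit with $\xi_{(1,1)}$; this is exactly the twisted group algebra $K^{(\omega,\omega^{-1})|_{N_{\tau_\ell}}}[N_{\tau_\ell}]$.

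The step I expect to require the most care is \textit{(ii)}: pinning down the precise pairing condition that cuts out $N_{\tau_\ell}$ — in particular the direction of the conjugation and which of $\phi,\psi$ is coupled to $\tau_\ell$ versus $\tau_\ell^{-1}$ — together with the dimension count through the double-coset size formula $|M\tau_\ell M|=|M|^2/|M\cap(\tau_\ell M\tau_\ell^{-1})|$. Items \textit{(i)} and \textit{(iii)} are then routine bookkeeping with orthogonal idempotents and the cocycle $\omega$.
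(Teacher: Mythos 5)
Your proposal is correct in substance, and it takes a genuinely different route from the paper: the paper gives no proof at all here, but simply cites \cite[Proposition 2.1]{CM3}, which in turn summarizes \cite[Propositions 3.1, 4.1, 4.2]{EG1}. Your argument replaces that citation with a direct computation in $K\hspace{-0.8pt}G$: part (i) via $J^{\pm1}\in K\hspace{-1pt}M\otimes K\hspace{-1pt}M$, part (ii) via the vanishing criterion for $e_{\phi}\tau_{\ell}e_{\psi}$ plus the count $|N_{\tau_{\ell}}|=|M|^2/|M\cap\tau_{\ell}M\tau_{\ell}^{-1}|=|M\tau_{\ell}M|$, and part (iii) by dualizing the explicit formula for $\Delta_J(e_{\phi}\tau_{\ell}e_{\psi})$. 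This buys a self-contained, elementary verification (and your formula for the dual multiplication, $\xi_{(\eta,\zeta)}\xi_{(\eta',\zeta')}=\omega(\eta,\eta')\omega(\zeta,\zeta')^{-1}\xi_{(\eta\eta',\zeta\zeta')}$, agrees exactly with the product of the dual basis $u_{(\phi,\psi)}$ that the paper asserts without proof at the start of Proposition \ref{Prop:irr}); what it loses is only the conceptual link to the corepresentation theory of cotriangular Hopf algebras that \cite{EG1} provides.

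One point deserves more care, and it is precisely the one you flagged yourself and then waved through. Your computation (which I have checked and is right, given $e_{\phi}=\frac{1}{|M|}\sum_m\phi(m^{-1})m$) yields the nonvanishing condition $\phi(m)=\psi(\tau_{\ell}^{-1}m\tau_{\ell})$ for all $m\in M\cap(\tau_{\ell}M\tau_{\ell}^{-1})$, whereas the statement prints $\psi(m)=\phi(\tau_{\ell}^{-1}m\tau_{\ell})$. These two conditions define subgroups of $\widehat{M}\times\widehat{M}$ of the same order, but they are genuinely different subsets in general (they coincide only when conjugation by $\tau_{\ell}^2$ acts trivially on the relevant characters; try $M=C_7$, $\tau m\tau^{-1}=m^2$). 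So you cannot assert that your condition ``is precisely the condition $(\phi,\psi)\in N_{\tau_{\ell}}$'' as printed; you should either record your condition as the correct indexing set (the printed one is most plausibly a transposition of $\phi$ and $\psi$) or explicitly reconcile the two. Nothing downstream in your proof breaks — parts (i) and (iii) and the dimension count only use that $N_{\tau_{\ell}}$ is the subgroup indexing the nonzero $e_{\phi}\tau_{\ell}e_{\psi}$ — but the labelling matters for later formulas in the paper such as $c_{\tau}(m,m')$, so the discrepancy should be stated, not silently absorbed.
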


In view of the decomposition \eqref{decompKGJ}, to describe the irreducible cocharacters of $(K\hspace{-0.9pt}G)_J$, it suffices to restrict our attention to those of $K(M\tau_{\ell} M)$. Set, for short, $\tau=\tau_{\ell}$. Denote by $\Rad_{\tau}$ the radical of the restriction of $(\omega,\omega^{-1})$ to $N_{\tau}$.

\begin{proposition}\label{Prop:irr} For every $m,m'\in M,$ consider the element
\begin{equation}\label{irrcoc}
c_{\tau}(m,m'):=\sqrt{\Big\vert\frac{N_{\tau}}{\Rad_{\tau}}\Big\vert}\sum_{(\phi,\psi)\in\Rad_{\tau}}\phi(m)\psi(m') e_{\phi} \tau e_{\psi}.
\end{equation}
Then:
\begin{enumerate}
\item[{\it(i)}]  $c_{\tau}(m,m')$ is an irreducible cocharacter of $(K\hspace{-0.9pt}G)_J$. \vspace{2pt}
\item[{\it (ii)}] $c_{\tau}(m_1,m'_1) = c_{\tau'}(m_2,m'_2)$ if and only if $\tau=\tau'$ and $(m_1,m'_1)(m_2,m'_2)^{-1} \in (\Rad_{\tau})^{\perp}.$
\end{enumerate}
\end{proposition}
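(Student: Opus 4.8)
The plan is to dualize the coalgebra decomposition of $(KG)_J$ from Proposition \ref{decomp}, reduce the problem to a single block $K(M\tau M)$, and there read off the irreducible cocharacters from Lemma \ref{irrtwgpal}. Recall that cocharacters of $(KG)_J$ are the characters of $(KG)_J$-comodules, equivalently of $(KG)_J^*$-modules, so irreducible cocharacters correspond to irreducible $(KG)_J^*$-modules. Dualizing \eqref{decompKGJ} yields an isomorphism of algebras $(KG)_J^* \simeq \bigoplus_{\ell=1}^n K(M\tau_\ell M)^*$, a direct sum of two-sided ideals. Hence every irreducible $(KG)_J^*$-module factors through the projection onto a single summand $K(M\tau_\ell M)^*$, and its character, regarded as an element of $(KG)_J^{**} = (KG)_J$, lies in the corresponding subcoalgebra $K(M\tau_\ell M)$. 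Thus I may fix $\tau = \tau_\ell$ and work inside $K(M\tau M)$.

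The heart of the argument is to make the identification of Proposition \ref{decomp}(iii) explicit: the isomorphism $K(M\tau M)^* \simeq K^{(\omega,\omega^{-1})\vert_{N_\tau}}[N_\tau]$ carries the basis $\{\xi_{(\phi,\psi)}\}_{(\phi,\psi)\in N_\tau}$ dual to $\{e_\phi\tau e_\psi\}_{(\phi,\psi)\in N_\tau}$ onto the standard basis $\{u_{(\phi,\psi)}\}$ of the twisted group algebra. Through the evaluation pairing, an element $c = \sum_{(\phi,\psi)\in N_\tau} a_{(\phi,\psi)}\, e_\phi\tau e_\psi$ of $K(M\tau M)$ corresponds to the linear functional $u_{(\phi,\psi)}\mapsto a_{(\phi,\psi)}$; thus $c$ is an irreducible cocharacter if and only if this functional is an irreducible character of $K^{(\omega,\omega^{-1})\vert_{N_\tau}}[N_\tau]$. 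Since $N_\tau$ is finite abelian, Lemma \ref{irrtwgpal} applies, with $M,\omega,\Rad(\omega)$ replaced by $N_\tau,(\omega,\omega^{-1})\vert_{N_\tau},\Rad_\tau$: the irreducible characters are the $\chi_\theta$, $\theta \in \widehat{N_\tau}$, with $\chi_\theta(u_{(\phi,\psi)}) = \sqrt{\vert N_\tau/\Rad_\tau\vert}\,\theta(\phi,\psi)$ if $(\phi,\psi)\in\Rad_\tau$ and $\chi_\theta(u_{(\phi,\psi)})=0$ otherwise, and $\chi_\theta = \chi_{\theta'}$ precisely when $\theta\vert_{\Rad_\tau} = \theta'\vert_{\Rad_\tau}$. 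Because $K$ is large enough, the restriction map $M \times M \to \widehat{N_\tau}$, sending $(m,m')$ to $[\,(\phi,\psi)\mapsto\phi(m)\psi(m')\,]$, is surjective, so every $\theta$ has this form. Substituting into the formula for $\chi_\theta$ and translating back through the dual basis produces exactly the element $c_\tau(m,m')$ of \eqref{irrcoc}. This proves (i): as $(m,m')$ ranges over $M\times M$ these exhaust the irreducible cocharacters lying in $K(M\tau M)$, and letting $\tau$ run over the double coset representatives $\tau_1,\dots,\tau_n$ gives all irreducible cocharacters of $(KG)_J$. (The scalar $\sqrt{\vert N_\tau/\Rad_\tau\vert}$ is the common dimension of the irreducible representations of $K^{(\omega,\omega^{-1})\vert_{N_\tau}}[N_\tau]$ by Lemma \ref{irrtwgpal}(v), hence a positive integer, so it lies in $K$.)

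For (ii), if $\tau \neq \tau'$ then $c_\tau(m_1,m'_1)$ and $c_{\tau'}(m_2,m'_2)$ are nonzero elements of distinct summands of the direct sum \eqref{decompKGJ} — nonzero because the coefficient of $e_\varepsilon\tau e_\varepsilon$ in $c_\tau(m_1,m'_1)$ is $\sqrt{\vert N_\tau/\Rad_\tau\vert}\neq 0$ and $(\varepsilon,\varepsilon)\in\Rad_\tau$ — hence they cannot coincide, so equality forces $\tau = \tau'$. When $\tau = \tau'$, comparing coefficients in \eqref{irrcoc} shows $c_\tau(m_1,m'_1) = c_\tau(m_2,m'_2)$ if and only if $\phi(m_1)\psi(m'_1) = \phi(m_2)\psi(m'_2)$ for all $(\phi,\psi)\in\Rad_\tau$, i.e. $\phi(m_1 m_2^{-1})\psi(m'_1 m_2'^{-1}) = 1$ for all $(\phi,\psi)\in\Rad_\tau$, which is precisely the condition $(m_1,m'_1)(m_2,m'_2)^{-1} \in (\Rad_\tau)^{\perp}$, the orthogonal being taken in $M\times M$ with respect to the pairing with $N_\tau \subseteq \widehat{M}\times\widehat{M}$; equivalently, via Lemma \ref{irrtwgpal}(iii), the characters of $N_\tau$ attached to $(m_1,m'_1)$ and $(m_2,m'_2)$ agree on $\Rad_\tau$.

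The main obstacle is the second paragraph: one must verify that the isomorphism of Proposition \ref{decomp}(iii) really matches the dual basis with the standard basis of the twisted group algebra up to the correct normalization, so that the factor $\sqrt{\vert N_\tau/\Rad_\tau\vert}$ in \eqref{irrcoc} comes out right. Everything else is routine bookkeeping with characters of finite abelian groups.
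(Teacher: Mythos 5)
Your proposal is correct and follows essentially the same route as the paper: identify the dual of the block $K(M\tau M)$ with the twisted group algebra $K^{(\omega,\omega^{-1})\vert_{N_\tau}}[N_\tau]$ via the dual basis, apply Lemma \ref{irrtwgpal} to read off the irreducible characters as $\chi_{(m,m')}$ for $(m,m')\in M\times M$, and translate back to obtain \eqref{irrcoc}, with part (ii) following from Lemma \ref{irrtwgpal}(iii) and the directness of the sum \eqref{decompKGJ}. The only difference is presentational: you make explicit the surjectivity of $M\times M\to\widehat{N_\tau}$ and the block reduction, which the paper leaves implicit.
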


\begin{proof}
Let $\{u_{(\phi,\psi)}\}_{(\phi,\psi)\in N_{\tau}}$ be the dual basis of $\{e_{\phi}\tau e_{\psi}\}_{(\phi,\psi)\in N_{\tau}}$. One can check that:
$$u_{(\phi_1,\psi_1)} u_{(\phi_2,\psi_2)}=\omega(\phi_1,\phi_2) \omega^{-1}(\psi_1,\psi_2)u_{(\phi_1\phi_2,\psi_1\psi_2)}.$$
We identify $K(M\tau M)^*$ with the twisted group algebra $K^{(\omega,\omega^{-1})\vert_{N_{\tau}}}[N_{\tau}]$. By duality, the irreducible cocharacters of $K(M\tau M)$ are obtained as the irreducible characters of $K(M\tau M)^*$. \par \smallskip

(i) The elements of $M\times M$ can be viewed\vspace{1pt} as characters on $\widehat{N_{\tau}}$ in the natural way. For every pair $(m,m') \in M \times M$, consider\vspace{-1pt} the irreducible representation $V_{(m,m')}$ of $K^{(\omega,\omega^{-1})\vert_{N_{\tau}}}[N_{\tau}]$ and the corresponding character $\chi_{(m,m')}$ as\vspace{1.5pt} in Lemma \ref{irrtwgpal}. Every irreducible cocharacter of $K(M\tau M)$ will be of this form.\vspace{1.5pt} We describe $\chi_{(m,m')}$ as an element in $K(M\tau M)$ by using Lemma \ref{irrtwgpal}(vi):
$$\begin{array}{rl}
\chi_{(m,m')} & \hspace{-4.5pt}=\hspace{1.5pt} {\displaystyle \sum_{(\phi,\psi)\in N_{\tau}} \chi_{(m,m')}(u_{(\phi,\psi)})e_{\phi} \tau e_{\psi}}  \vspace{5pt} \\
              & \hspace{-4.5pt}=\hspace{1.5pt} {\displaystyle \sqrt{\Big\vert\frac{N_{\tau}}{\Rad_{\tau}}\Big\vert} \sum_{(\phi,\psi)\in \Rad_{\tau}} \phi(m)\psi(m') e_{\phi} \tau e_{\psi}} \vspace{5pt} \\
              & \hspace{-4.5pt}=\hspace{1.5pt} c_{\tau}(m,m').
\end{array}$$
\smallskip

(ii) According to Lemma \ref{irrtwgpal}(iii), $V_{(m_1,m'_1)} \simeq V_{(m_2,m'_2)}$ if and only if $(m_1,m'_1) \vert_{\Rad_{\tau}}=(m_2,m'_2) \vert_{\Rad_{\tau}}$. This condition\vspace{0.5pt} is equivalent to $(m_1,m'_1)(m_2,m'_2)^{-1} \in (\Rad_{\tau})^{\perp}.$ Finally, notice\vspace{0.66pt} that $c_{\tau}(m_1,m'_1) \neq c_{\tau'}(m_2,m'_2)$ if $\tau \neq \tau'$ because\vspace{0.5pt} the intersection of $K(M\tau M)$ and $K(M\tau' M)$ is trivial in view of the decomposition \eqref{decompKGJ} of $(K\hspace{-0.9pt}G)_J$.
\end{proof}

Let $\{(m_i,m'_i)\}_{i=1}^r$ be coset representatives of $(\Rad_{\tau})^{\perp}$ in $M \times M$. By using that $\phi(e_{\varepsilon})=\delta_{\phi,\varepsilon}$ and $(\varepsilon,\varepsilon)\in \Rad_{\tau}$, we have the following chain of equalities:
$$\begin{array}{rl}
e_{\varepsilon} \tau e_{\varepsilon} & \hspace{-4.5pt}=\hspace{1.5pt} {\displaystyle \sum_{(\phi,\psi)\in \Rad_{\tau}}  \phi(e_{\varepsilon}) \psi(e_{\varepsilon}) e_{\phi} \tau e_{\psi}} \vspace{5pt} \\
& \hspace{-4.5pt}=\hspace{1.5pt} {\displaystyle \frac{1}{\vert M \vert^2} \sum_{(\phi,\psi)\in \Rad_{\tau}} \hspace{2pt} \sum_{m,m' \in M} \phi(m)\psi(m') e_{\phi} \tau e_{\psi}} \vspace{5pt} \\
& \hspace{-4.5pt}=\hspace{1.5pt} {\displaystyle  \frac{1}{\vert M \vert^2} \sum_{(\phi,\psi)\in \Rad_{\tau}} \big\vert (\Rad_{\tau})^{\perp} \big\vert  \sum_{i=1}^r \phi(m_i)\psi(m'_i) e_{\phi} \tau e_{\psi}} \vspace{5pt} \\
& \hspace{-4.5pt}=\hspace{1.5pt} {\displaystyle \frac{1}{\vert \Rad_{\tau} \vert} \sum_{i=1}^r \sum_{(\phi,\psi)\in \Rad_{\tau}} \phi(m_i)\psi(m'_i) e_{\phi} \tau e_{\psi}}.
\end{array}$$
Hence:
\begin{equation}\label{S2Eq1}
\sum_{i=1}^r c_{\tau}(m_i,m'_i) = \sqrt{\Big\vert\frac{N_{\tau}}{\Rad_{\tau}}\Big\vert} |\Rad_{\tau}| e_{\varepsilon} \tau e_{\varepsilon} = \sqrt{|N_{\tau}||\Rad_{\tau}|} e_{\varepsilon} \tau e_{\varepsilon}.
\end{equation}

\begin{proposition}
If $\omega$ is non-degenerate, then $\sqrt{|N_{\tau}||\Rad_{\tau}|}$ is a natural number that divides $|M|$.
\end{proposition}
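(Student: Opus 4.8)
The plan is to make both $|N_\tau|$ and $|\Rad_\tau|$ fully explicit and then run an elementary divisibility argument. Set $D := M\cap\tau M\tau^{-1}$. First I would record that $|N_\tau| = |M|^2/|D|$: by Proposition \ref{decomp}(ii) the set $\{e_\phi\tau e_\psi\}_{(\phi,\psi)\in N_\tau}$ is a basis of the subcoalgebra $K(M\tau M)$, whose dimension is the number of elements of the double coset $M\tau M$, namely $|M|^2/|M\cap\tau M\tau^{-1}|$. Next, the skew-symmetric pairing $(\omega,\omega^{-1})\vert_{N_\tau}$ descends to a non-degenerate one on $N_\tau/\Rad_\tau$, so Lemma \ref{irrtwgpal}(v), applied to $K^{(\omega,\omega^{-1})\vert_{N_\tau}}[N_\tau]\cong K(M\tau M)^*$ via Proposition \ref{decomp}(iii), shows that $d := \sqrt{\lvert N_\tau/\Rad_\tau\rvert}$ is a natural number. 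Hence $|N_\tau| = d^2\lvert\Rad_\tau\rvert$ and $\sqrt{\lvert N_\tau\rvert\lvert\Rad_\tau\rvert} = d\,\lvert\Rad_\tau\rvert\in\Na$, which disposes of the first assertion.

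The crux is then the divisibility relation $\lvert\Rad_\tau\rvert \bigm| |D|$. Here I would use that $\omega$ is non-degenerate to view $\B_\omega$ as a group isomorphism $\mu\colon\widehat{M}\xrightarrow{\ \sim\ }M$, characterised by $\B_\omega(\phi,\phi') = \phi'(\mu(\phi))$ for all $\phi,\phi'\in\widehat{M}$ under the identification $M=\widehat{\widehat{M}}$. Unwinding definitions, the pairing of $N_\tau$ sends $\bigl((\phi,\psi),(\phi',\psi')\bigr)$ to $\B_\omega(\phi,\phi')\B_\omega(\psi,\psi')^{-1} = \phi'(\mu(\phi))\,\psi'\bigl(\mu(\psi)^{-1}\bigr)$, so $(\phi,\psi)\in\Rad_\tau$ exactly when $\bigl(\mu(\phi),\mu(\psi)^{-1}\bigr)$ lies in $N_\tau^{\perp}\subseteq M\times M$, the annihilator of $N_\tau$ under the perfect pairing $(M\times M)\times(\widehat{M}\times\widehat{M})\to K^\times$, $\bigl((m,m'),(\phi,\psi)\bigr)\mapsto\phi(m)\psi(m')$. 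Thus $(\phi,\psi)\mapsto\bigl(\mu(\phi),\mu(\psi)^{-1}\bigr)$ restricts to an injective group homomorphism $\Rad_\tau\hookrightarrow N_\tau^{\perp}$, whence $\lvert\Rad_\tau\rvert$ divides $\lvert N_\tau^{\perp}\rvert = \lvert M\times M\rvert/\lvert N_\tau\rvert = |D|$.

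Finally I would close the loop with a one-line arithmetic step. Set $q := |M|/\bigl(d\,\lvert\Rad_\tau\rvert\bigr)\in\Q_{>0}$. Using $|M|^2 = \lvert N_\tau\rvert\,|D|$ together with $d^2 = \lvert N_\tau\rvert/\lvert\Rad_\tau\rvert$, a direct computation gives $q^2 = |D|/\lvert\Rad_\tau\rvert$, which is a positive integer by the divisibility just established. A positive rational whose square is an integer is itself an integer, so $q\in\Na$; that is, $\sqrt{\lvert N_\tau\rvert\lvert\Rad_\tau\rvert} = d\,\lvert\Rad_\tau\rvert$ divides $|M|$, as claimed.

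The main obstacle is the identification $(\phi,\psi)\in\Rad_\tau\iff\bigl(\mu(\phi),\mu(\psi)^{-1}\bigr)\in N_\tau^{\perp}$: one has to keep straight the several dualities at play (the pairing $\B_\omega$ on $\widehat{M}$, the tautological pairing between $\widehat{M}\times\widehat{M}$ and $M\times M$, and the inversion coming from the second factor $\omega^{-1}$) and verify that the resulting assignment is a genuine group homomorphism. Once that is in place, everything else is bookkeeping with orders of finite abelian groups.
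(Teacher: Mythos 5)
Your proof is correct, and it takes a genuinely different route from the paper's. The paper restricts the unique $|M|$-dimensional irreducible representation $W$ of $K^{(\omega,\omega^{-1})}[\hspace{1pt}\widehat{M}\times\widehat{M}\hspace{1pt}]$ to the subalgebra $K^{(\omega,\omega^{-1})\vert_{N_{\tau}}}[N_{\tau}]$, invokes freeness of the big twisted group algebra over the small one to see that all irreducibles occur in $W$ with a common multiplicity $s$, and reads off $|M|=s\sqrt{|N_{\tau}||\Rad_{\tau}|}$ by counting dimensions. You instead make everything numerically explicit: $|N_{\tau}|=|M|^2/|D|$ with $D=M\cap\tau M\tau^{-1}$ via the double-coset basis of Proposition \ref{decomp}(ii), the divisibility $|\Rad_{\tau}|\mid|D|$ via the embedding $\Rad_{\tau}\hookrightarrow N_{\tau}^{\perp}$ induced by the isomorphism $\mu:\widehat{M}\to M$ coming from non-degeneracy of $\B_{\omega}$, and then elementary arithmetic (a positive rational with integral square is an integer). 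Each approach has its merits: the paper's is shorter and realizes the quotient $s=|M|/\sqrt{|N_{\tau}||\Rad_{\tau}|}$ conceptually as a multiplicity, which is exactly the form in which it is reused in the proof of Proposition \ref{vipcharacter}; yours avoids the unproved freeness assertion, stays within Pontryagin duality for finite abelian groups, and yields the extra explicit identity $s=\sqrt{|D|/|\Rad_{\tau}|}$, which the paper's argument does not provide. The only points to keep an eye on are ones you already flag: bimultiplicativity of $\B_{\omega}$ (needed for $\mu$ and for your map on $\Rad_{\tau}$ to be a homomorphism) and the fact that Lemma \ref{irrtwgpal}(v) is being applied, as the paper itself does, to the cocycle $(\omega,\omega^{-1})\vert_{N_{\tau}}$ after inflating from $N_{\tau}/\Rad_{\tau}$.
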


\begin{proof}
Since $\omega$ is non-degenerate, the algebra $K^{(\omega,\omega^{-1})}[\hspace{1pt}\widehat{M} \times \widehat{M}\hspace{1pt}]$ has\vspace{2pt} a unique irreducible representation, say $W$, of\vspace{1pt} dimension $|M|$. Consider $W$ as a representation of $K^{(\omega,\omega^{-1})}[N_{\tau}]$ by restriction of scalars. It decomposes\vspace{1pt} as the direct sum $W \simeq \oplus_{\phi \in \widehat{\Rad_{\tau}}} V_{\phi}^{(s_{\phi})}$. On the other hand, $K^{(\omega,\omega^{-1})}[\hspace{1pt}\widehat{M} \times \widehat{M}\hspace{1pt}]$ is free\vspace{1pt} as a module over $K^{(\omega,\omega^{-1})}[N_{\tau}]$. Bearing in mind\vspace{2pt} that all the $V_{\phi}$'s have the same dimension, the above implies that all the numbers $s_{\phi}$'s are equal. Write\vspace{1pt} simply $s$ for them. Counting dimensions, we obtain:
\begin{equation}\label{S2Eq2}
|M| = s |\widehat{\Rad_{\tau}}| \sqrt{\Big\vert\frac{N_{\tau}}{\Rad_{\tau}}\Big\vert}= s\sqrt{|N_{\tau}||\Rad_{\tau}|}.
\end{equation}
This establishes the claim.
\end{proof}

The following result refines \cite[Proposition 2.2]{CM3}, which required the hypothesis $M \cap \big(\tau M \tau^{-1}\big) =\{1\}$:

\begin{proposition}\label{vipcharacter}
If $\omega$ is non-degenerate, then the cocharacter $|M|e_{\varepsilon} \tau e_{\varepsilon}$ is contained in every Hopf order of $(K\hspace{-0.9pt}G)_J$.
\end{proposition}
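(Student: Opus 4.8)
The plan is to realize $|M|\hspace{1pt}e_{\varepsilon}\tau e_{\varepsilon}$ as a non-negative integer combination of the irreducible cocharacters $c_{\tau}(m,m')$ constructed in Proposition \ref{Prop:irr}, and then to apply Proposition \ref{character}(ii), by which every cocharacter of $(K\hspace{-0.9pt}G)_J$ lies in every Hopf order.

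First I would reduce to the case in which $\tau$ is one of the fixed double-coset representatives $\tau_{\ell}$. Since $e_{\varepsilon}=\frac{1}{|M|}\sum_{m\in M}m$ satisfies $m\hspace{1pt}e_{\varepsilon}=e_{\varepsilon}\hspace{1pt}m=e_{\varepsilon}$ for all $m\in M$, writing $\tau=m_1\tau_{\ell}m_2$ with $m_1,m_2\in M$ gives $e_{\varepsilon}\tau e_{\varepsilon}=e_{\varepsilon}\tau_{\ell}e_{\varepsilon}$. Hence it is enough to prove the statement for $\tau=\tau_{\ell}$, so that all the notation of the preceding subsection ($N_{\tau}$, $\Rad_{\tau}$, and the cocharacters $c_{\tau}(m,m')$) is available.

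Next I would combine the two identities already established. Equation \eqref{S2Eq1} reads $\sum_{i=1}^{r}c_{\tau}(m_i,m'_i)=\sqrt{|N_{\tau}||\Rad_{\tau}|}\,e_{\varepsilon}\tau e_{\varepsilon}$, where $\{(m_i,m'_i)\}_{i=1}^{r}$ are coset representatives of $(\Rad_{\tau})^{\perp}$ in $M\times M$; and equation \eqref{S2Eq2}, valid because $\omega$ is non-degenerate, reads $|M|=s\sqrt{|N_{\tau}||\Rad_{\tau}|}$ with $s\in\mathbb{N}$. Multiplying \eqref{S2Eq1} by $s$ then yields $|M|\hspace{1pt}e_{\varepsilon}\tau e_{\varepsilon}=s\sum_{i=1}^{r}c_{\tau}(m_i,m'_i)$. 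Since each $c_{\tau}(m_i,m'_i)$ is an irreducible cocharacter of $(K\hspace{-0.9pt}G)_J$ by Proposition \ref{Prop:irr}(i), it belongs to every Hopf order $X$ of $(K\hspace{-0.9pt}G)_J$ by Proposition \ref{character}(ii); as $X$ is an $R$-module and $s\in\mathbb{N}\subseteq R$, the element $|M|\hspace{1pt}e_{\varepsilon}\tau e_{\varepsilon}$ lies in $X$ as well.

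I do not anticipate a real obstacle: the substantive input — the description of the irreducible cocharacters and the divisibility $\sqrt{|N_{\tau}||\Rad_{\tau}|}\mid |M|$ — was supplied by Proposition \ref{Prop:irr} and the proposition immediately preceding this statement. The only points requiring a little care are the reduction $e_{\varepsilon}\tau e_{\varepsilon}=e_{\varepsilon}\tau_{\ell}e_{\varepsilon}$ and the observation that $s=|M|/\sqrt{|N_{\tau}||\Rad_{\tau}|}$ is an honest positive integer, so that we are scaling cocharacters by an element of $R$ and not merely of $K$.
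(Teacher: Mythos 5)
Your proposal is correct and follows essentially the same route as the paper: multiply \eqref{S2Eq1} by the integer $s$ from \eqref{S2Eq2} to write $|M|\hspace{1pt}e_{\varepsilon}\tau e_{\varepsilon}=s\sum_{i=1}^{r}c_{\tau}(m_i,m'_i)$ and invoke Proposition \ref{character}(ii). Your preliminary reduction $e_{\varepsilon}\tau e_{\varepsilon}=e_{\varepsilon}\tau_{\ell}e_{\varepsilon}$ is a sound (and welcome) explicit justification of a step the paper handles implicitly by having already set $\tau=\tau_{\ell}$.
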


\begin{proof}
From \eqref{S2Eq1} and \eqref{S2Eq2} we have:
$$|M|e_{\varepsilon} \tau e_{\varepsilon} = s\sqrt{|N_{\tau}||\Rad_{\tau}|} e_{\varepsilon} \tau e_{\varepsilon} = s \sum_{i=1}^r c_{\tau}(m_i,m'_i).$$
Now, use the fact that the right-hand side term is a cocharacter of $(K\hspace{-0.9pt}G)_J$ and that a cocharacter is contained in any Hopf order by Proposition \ref{character}(ii).
\end{proof}

\section{Lagrangian decomposition and a sufficient condition for the existence of Hopf orders}

In \cite[Proposition 4.1]{CM3} we constructed an example of integral Hopf order for the twisted group algebra $(K\hspace{-1pt}S_4)_J$, where $J$ was a twist arising from a subgroup of $S_4$ isomorphic to the Klein four-group. The key fact in this construction was the existence of a Hopf order $X$ of $K\hspace{-1pt}S_4$ such that $J^{\pm 1} \in X \otimes_R X$. Then, the twisted Hopf $R$-algebra $X_J$ of $X$ provides a Hopf order of $(K\hspace{-1pt}S_4)_J$. The goal of this section is to fit this example in a general group-theoretical framework. \par \vspace{2pt}

We start with a brief discussion on Lagrangian decompositions of an abelian group of central type. We refer the reader to \cite[Section 4]{D} and \cite[Section 1]{BGM} for further details. \par \vspace{2pt}

Let $M$ be a finite abelian group of central type and $\beta:M \times M \rightarrow K^{\times}$ a non-degenerate skew-symmetric pairing. For a subgroup $L$ of $M$, we consider the orthogonal complement
$$L^{\perp} = \{m \in M : \beta(m,l)=1 \hspace{5pt} \forall l \in L\}.$$
The subgroup $L$ is said to be a \emph{Lagrangian} of $M$ if $L=L^{\perp}$. A Lagrangian subgroup gives rise to a short exact sequence
$$1 \rightarrow L \rightarrow M \stackrel{\hspace{-2pt}\pi}{\rightarrow} \widehat{L} \rightarrow 1.$$
Here, $\pi$ is defined\vspace{-1pt} by $\pi(m)(l)=\beta(m,l)$ for all $m \in M, l \in L$. Suppose that $\pi$ splits. Then, $M\simeq L\times \widehat{L}$ and such a decomposition is called\vspace{0.5pt} \emph{Lagrangian decomposition} of $M$. It is proved in \cite[Lemma 4.2]{D} and \cite[Proposition 1.7]{BGM} that:
\begin{enumerate}
\item A Lagrangian decomposition of $M$ always exists. \vspace{2pt}
\item Writing every element of $M$ as a pair $(l,\lambda)$, with $l \in L,\lambda \in \widehat{L}$, the pairing $\beta$ takes the form:
$$\beta\big((l,\lambda),(l',\lambda')\big) = \lambda(l')\lambda'(l)^{-1}.$$
\end{enumerate}

Let $\alpha: M \times M \rightarrow K^{\times}$ be now a non-degenerate $2$-cocycle. Applying the preceding discussion to the pairing ${\mathcal B}_{\alpha}$, we obtain that $\alpha$ is (up to coboundary) given by:
\begin{equation}\label{eval}
\alpha\big((l,\lambda),(l',\lambda')\big) = \lambda(l').
\end{equation}

We will next see that this formula, when applied to the twisting procedure, allows to express the twist in an illuminating form. \par \vspace{2pt}

Suppose that $\omega: \widehat{M} \times \widehat{M} \rightarrow K^{\times}$ is a non-degenerate $2$-cocycle. Let $\widehat{M} \simeq L\times \widehat{L}$ be a Lagrangian decomposition of $\widehat{M}$ such that $\omega$ is given as in \eqref{eval}. Call $f:L\times \widehat{L} \rightarrow \widehat{M}$ the isomorphism giving the previous decomposition. Identify the character group of $\widehat{M}$ with $M$ in the natural way, and similarly for $\widehat{L}$. Thus, we have an isomorphism $\widehat{f}:M \rightarrow \widehat{L} \times L$. Under these isomorphisms, we can see the elements of $\widehat{M}$ as pairs $(l,\lambda)$, with $l \in L,\lambda \in \widehat{L}$, and the elements of $M$ as pairs $(\lambda',l')$, with $\lambda' \in \widehat{L},l' \in L$. The evaluation\vspace{1pt} of $\widehat{M}$ at $M$ is then given by $(l,\lambda)(\lambda',l')=\lambda'(l)\lambda(l')$. \par \vspace{2pt}

Under these identifications, we also have that:
\begin{enumerate}
\item[(1)] The primitive idempotents in $K\hspace{-1pt}M$ are of the form $e_{(l,\lambda)}$. \vspace{2pt}
\item[(2)] Each $e_{(l,\lambda)}$ can be rewritten as the product $e_{\lambda} e_l$, where now $e_{\lambda}\in K\hspace{-1pt}L$ and $e_l \in K\hspace{-0.5pt}\widehat{L}$. (We again view $l \in L$ as a character of $\widehat{L}$.)
\end{enumerate}

The next result ties the twist afforded by $\omega$ with the dual bases $\{(\lambda,e_{\lambda})\}_{\lambda \in \widehat{L}}$ and $\{(e_l,l)\}_{l \in L}$ of $K\hspace{-1pt}L$:

\begin{lemma}\label{lemJ}
The twist $J$ in $K\hspace{-1pt}M\otimes K\hspace{-1pt}M$ afforded by $\omega$ can be expressed as:
\begin{equation}\label{Jrwt}
J \hspace{1pt}=\hspace{1pt} \sum_{\lambda \in \widehat{L}} e_{\lambda} \otimes \lambda \hspace{1pt}=\hspace{1pt} \sum_{l \in L} l \otimes e_l.\vspace{-2pt}
\end{equation}
Hence, $J$ lies in $K\hspace{-1pt}L\otimes K\hspace{-0.5pt}\widehat{L}.$
\end{lemma}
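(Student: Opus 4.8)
The plan is to unravel the definition $J=\sum_{\phi,\psi\in\widehat{M}}\omega(\phi,\psi)\,e_{\phi}\otimes e_{\psi}$ by plugging in the explicit shape \eqref{eval} of $\omega$ and the factorisation of the idempotents of $K\hspace{-1pt}M$ recorded in (1)--(2) above. I keep the identifications fixed before the statement: $\widehat{M}\simeq L\times\widehat{L}$ and, dually, $M\simeq\widehat{L}\times L$, so that the evaluation of $\widehat{M}$ on $M$ is $(l,\lambda)(\lambda',l')=\lambda'(l)\lambda(l')$ and a primitive idempotent of $K\hspace{-1pt}M$ factors as $e_{(l,\lambda)}=e_{\lambda}e_l=e_le_{\lambda}$ with $e_{\lambda}\in K\hspace{-1pt}L$ and $e_l\in K\hspace{-0.5pt}\widehat{L}$ commuting. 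The only ingredients used in the computation are $\sum_{l\in L}e_l=1$ in $K\hspace{-0.5pt}\widehat{L}$, $\sum_{\lambda\in\widehat{L}}e_{\lambda}=1$ in $K\hspace{-1pt}L$, the character orthogonality $\sum_{l\in L}\mu(l)=|L|\,\delta_{\mu,\varepsilon}$ for $\mu\in\widehat{L}$ together with its analogue on $\widehat{L}$, and $|L|=|\widehat{L}|$.

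Substituting $\phi=(l,\lambda)$, $\psi=(l',\lambda')$ and $\omega(\phi,\psi)=\lambda(l')$ gives
\[
J=\sum_{l,l'\in L}\ \sum_{\lambda,\lambda'\in\widehat{L}}\lambda(l')\,(e_{\lambda}e_l)\otimes(e_{\lambda'}e_{l'}).
\]
Carrying out the sum over $\lambda'$ and then over $l$ collapses the idempotent sums $\sum_{\lambda'}e_{\lambda'}$ and $\sum_l e_l$ to $1$, leaving $J=\sum_{\lambda\in\widehat{L}}\sum_{l'\in L}\lambda(l')\,e_{\lambda}\otimes e_{l'}$. Writing $e_{l'}=\tfrac{1}{|\widehat{L}|}\sum_{\mu\in\widehat{L}}\mu(l')^{-1}\mu$ in the group basis of $K\hspace{-0.5pt}\widehat{L}$ and applying orthogonality of the characters of $L$ yields $\sum_{l'\in L}\lambda(l')\,e_{l'}=\lambda$, so $J=\sum_{\lambda\in\widehat{L}}e_{\lambda}\otimes\lambda$. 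This is the first equality in \eqref{Jrwt}, and it already exhibits $J$ inside $K\hspace{-1pt}L\otimes K\hspace{-0.5pt}\widehat{L}$.

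For the second equality one can either repeat the symmetric collapse (sum over $l$ and $\lambda'$ first and then over $\lambda$, using orthogonality on $\widehat{L}$ to get $\sum_{\lambda\in\widehat{L}}\lambda(l')\,e_{\lambda}=l'$), or argue structurally: under $(K\hspace{-1pt}L)^{*}\simeq K\hspace{-0.5pt}\widehat{L}$ the family $\{\lambda\}_{\lambda\in\widehat{L}}$ is precisely the basis of $(K\hspace{-1pt}L)^{*}$ dual to the idempotent basis $\{e_{\lambda}\}_{\lambda\in\widehat{L}}$ of $K\hspace{-1pt}L$, while $\{e_l\}_{l\in L}$ is the basis dual to the group basis $\{l\}_{l\in L}$; hence $\sum_{\lambda\in\widehat{L}}e_{\lambda}\otimes\lambda$ and $\sum_{l\in L}l\otimes e_l$ both represent the canonical element of $K\hspace{-1pt}L\otimes(K\hspace{-1pt}L)^{*}$ and therefore agree.

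There is no essential difficulty here: the argument is entirely formal. The one thing that demands care — and the likeliest source of a slip — is the bookkeeping of the three nested dualisations $\widehat{M}\leftrightarrow M$, $L\leftrightarrow\widehat{L}$ and $\widehat{\widehat{L}}=L$; in particular one must keep track that after these identifications a primitive idempotent of $K\hspace{-1pt}L$ acts as a dual-basis functional attached to a group element of $K\hspace{-0.5pt}\widehat{L}$ and vice versa, and that the pairing used in \eqref{eval} and in $(l,\lambda)(\lambda',l')=\lambda'(l)\lambda(l')$ is applied to the correct arguments with the correct inverses.
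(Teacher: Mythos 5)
Your computation is correct and is essentially the paper's own proof: both substitute $\omega((l,\lambda),(l',\lambda'))=\lambda(l')$ into the definition of $J$, factor $e_{(l,\lambda)}=e_{\lambda}e_l$, collapse the complete sums of idempotents to $1$, and identify $\sum_{l'\in L}\lambda(l')e_{l'}=\lambda$ (resp.\ $\sum_{\lambda\in\widehat{L}}\lambda(l')e_{\lambda}=l'$ for the second equality). The dual-basis remark for the second equality is a pleasant alternative phrasing but not a genuinely different route.
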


\begin{proof}
We compute:
$$\begin{array}{rl}
J & \hspace{-2pt}=\hspace{3pt} {\displaystyle \sum_{l,l' \in L} \hspace{2pt} \sum_{\lambda,\lambda' \in \widehat{L}} \omega\big((l,\lambda),(l',\lambda')\big) e_{(l,\lambda)} \otimes e_{(l',\lambda')}} \vspace{4pt} \\
  & \hspace{-6pt}\stackrel{\eqref{eval}}{=}\hspace{-1pt} {\displaystyle \sum_{l,l' \in L} \hspace{2pt} \sum_{\lambda,\lambda' \in \widehat{L}} \lambda(l') e_{\lambda} e_l \otimes e_{\lambda'} e_{l'}}  \vspace{5pt} \\
  & \hspace{-2pt}=\hspace{3pt} {\displaystyle \sum_{\lambda \in \widehat{L}} e_{\lambda} \bigg(\sum_{l \in L} e_l \bigg) \otimes \bigg(\sum_{\lambda' \in \widehat{L}} e_{\lambda'} \bigg) \bigg(\sum_{l' \in L} \lambda(l') e_{l'} \bigg)}  \vspace{5pt} \\
  & \hspace{-2pt}=\hspace{3pt} {\displaystyle \sum_{\lambda \in \widehat{L}} e_{\lambda} \otimes \lambda.}
\end{array}$$

The expression in the right-hand side of \eqref{Jrwt} is obtained in a similar way by using that $\sum_{\lambda \in \widehat{L}} \lambda(l')e_{\lambda}=l'$.
\end{proof}

Observe that
\begin{equation}\label{J-1rwt}
J^{-1} \hspace{1pt}=\hspace{1pt} \sum_{\lambda \in \widehat{L}} e_{\lambda} \otimes \lambda^{-1} \hspace{1pt}=\hspace{1pt} \sum_{l \in L} l^{-1} \otimes e_l.
\end{equation}

We are now in a position to state the main result of this section:

\begin{theorem}\label{thm:main1}
Let $K$ be a (large enough) number field with ring\vspace{1pt} of integers $R$. Let $G$ be a finite group and $M$ an abelian subgroup of $G$ of central type. Consider the twist \linebreak $J$ in $K\hspace{-1pt}M \otimes K\hspace{-1pt}M$ afforded by a non-degenerate $2$-cocycle $\omega: \widehat{M} \times \widehat{M} \rightarrow K^{\times}$. \par \vspace{2pt}

Fix a Lagrangian decomposition $\widehat{M} \simeq L \times \widehat{L}$. Suppose that $L$ (viewed as inside of $M$) is contained in a normal abelian subgroup $N$ of $G$. Then, $(K\hspace{-0.8pt}G)_J$ admits a Hopf order over $R$.
\end{theorem}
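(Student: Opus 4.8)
The strategy is the one sketched in the introduction: produce a Hopf order $X$ of the \emph{untwisted} algebra $KG$ satisfying $J^{\pm 1}\in X\otimes_R X$, and then quote Proposition \ref{twistorder}.

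First I would reduce to a convenient normalization of $\omega$. Replacing $\omega$ by a cohomologous cocycle replaces $J$ by a gauge-equivalent twist $J'=(u\otimes u)J\Delta(u)^{-1}$, and conjugation by $u$ is then a Hopf algebra isomorphism $(KG)_J\to(KG)_{J'}$ under which Hopf orders transport; so it costs nothing to assume that $\omega$ has the normal form \eqref{eval} attached to the fixed Lagrangian decomposition $\widehat{M}\simeq L\times\widehat{L}$. I would also enlarge $K$ so that $KN$ splits as an algebra — this is exactly what ``large enough'' provides — and write $\{e^N_\nu\}_{\nu\in\widehat N}$ for its primitive idempotents. Dualizing the Lagrangian decomposition realizes $L$ and $\widehat L$ as subgroups of $M$, hence of $G$, and the hypothesis is precisely that this copy of $L$ lies inside $N$.

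Next I would check that $X$, the $R$-subalgebra of $KG$ generated by $\{e^N_\nu g:\nu\in\widehat N,\ g\in G\}$, is a Hopf order of $KG$. Normality of $N$ gives $g\,e^N_\nu\,g^{-1}=e^N_{g\triangleright\nu}$ and hence the multiplication rule $(e^N_\nu g)(e^N_{\nu'}g')=\delta_{\nu,\,g\triangleright\nu'}\,e^N_\nu\,gg'$; thus $X$ is the $R$-span of the finite set $\{e^N_\nu g\}$, so it is finitely generated, and being torsion-free over the Dedekind domain $R$ it is projective. Since $\sum_\nu e^N_\nu g=g\in X$ for every $g\in G$, one has $X\otimes_R K=KG$, so $X$ is a lattice. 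Finally, from $\Delta(e^N_\nu)=\sum_\eta e^N_\eta\otimes e^N_{\eta^{-1}\nu}$, $\varepsilon(e^N_\nu)=\delta_{\nu,1}$, $S(e^N_\nu)=e^N_{\nu^{-1}}$ and the fact that $g$ is grouplike, one computes $\Delta(e^N_\nu g)=\sum_\eta(e^N_\eta g)\otimes(e^N_{\eta^{-1}\nu}g)\in X\otimes_R X$, $\varepsilon(e^N_\nu g)=\delta_{\nu,1}\in R$, $S(e^N_\nu g)=g^{-1}e^N_{\nu^{-1}}=e^N_{g^{-1}\triangleright\nu^{-1}}g^{-1}\in X$, and $1=\sum_\nu e^N_\nu\in X$; so $X$ is closed under the Hopf operations of $KG$.

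Then I would verify $J^{\pm 1}\in X\otimes_R X$. By Lemma \ref{lemJ} and \eqref{J-1rwt}, $J^{\pm 1}=\sum_{\lambda\in\widehat L}e^L_\lambda\otimes\lambda^{\pm 1}$, with $L$ and $\widehat L$ viewed inside $M\subseteq G$. Because $L\subseteq N$ we have $KL\subseteq KN$, and since $KN$ is split commutative each primitive idempotent $e^L_\lambda$ of $KL$ equals $\sum_{\nu|_L=\lambda}e^N_\nu$, a $\{0,1\}$-combination of primitive idempotents of $KN$, hence lies in $X$; also $\lambda^{\pm 1}\in\widehat L\subseteq G\subseteq X$. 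Therefore $J^{\pm 1}\in X\otimes_R X$, and Proposition \ref{twistorder} yields that $X$ is a Hopf order of $(KG)_J$ over $R$, which proves the theorem. The only genuinely delicate point is the bookkeeping: correctly identifying the copy of $L$ inside $M$ through the dualized Lagrangian decomposition, and checking that the normalization of $\omega$ and the enlargement of $K$ leave the statement untouched; once that is in place the rest is a routine verification.
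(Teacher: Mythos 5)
Your proposal is correct and follows essentially the same route as the paper: you construct the same lattice $X$ generated by $\{e^N_{\nu}g : \nu\in\widehat{N},\, g\in G\}$, verify it is a Hopf order of $K\hspace{-0.8pt}G$, show $J^{\pm 1}\in X\otimes_R X$ via Lemma \ref{lemJ}, and conclude with Proposition \ref{twistorder}. The only differences are explicit justifications (gauge equivalence to reduce $\omega$ to the normal form \eqref{eval}, and projectivity of $X$ over the Dedekind domain $R$) that the paper leaves implicit.
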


\begin{proof}
We will construct a Hopf order $X$ of $K\hspace{-0.8pt} G$ such that $J^{\pm 1} \in X \otimes_R X$. Then, $X$ will be closed under the coproduct and the antipode of $(K\hspace{-0.9pt} G)_J$ by Proposition \ref{twistorder}. \par \vspace{2pt}

Let $X$ be\vspace{-1pt} the $R$-subalgebra of $K\hspace{-0.8pt} G$ generated by the set $\{e_{\nu}g: \nu \in \widehat{N}, g\in G\}$. Since $N$ is normal, $G$ acts on $\widehat{N}$ by $(g \triangleright \nu)(n)=\nu(g^{-1}ng)$ for all $n \in N$. Thus, $ge_{\nu}g^{-1}=e_{g \hspace{0.65pt}\triangleright\hspace{0.65pt} \nu}$, and we get the rule: $(e_{\nu}g)(e_{\nu'}g')=e_{\nu}e_{g \hspace{0.65pt}\triangleright\hspace{0.65pt} \nu'}gg'$. Choose\vspace{1.5pt} a set $Q$ of coset representatives of $N$ in $G$ with $1$ as the representative of $N$. Then, $X$ is a free $R$-module with basis $\{e_{\nu}q: \nu \in \widehat{N}, q \in Q\}$. One can see\vspace{1pt} that $X$ is a Hopf order of $K\hspace{-0.8pt} G$ by using the formulas:  $\Delta(e_{\nu})=\sum_{\eta \in \widehat{N}} e_{\eta} \otimes e_{\eta^{-1}\nu}, \hspace{3pt} \varepsilon(e_{\nu})=\delta_{\nu,1},$ and $S(e_{\nu})=e_{\nu^{-1}}$. \par \vspace{2pt}

An idempotent of $K\hspace{-1pt}L$ is a sum of primitive\vspace{0.5pt} idempotents of $K\hspace{-1pt}N$ because $K\hspace{-1pt}L$ is a subalgebra of $K\hspace{-1.2pt}N$ and $K\hspace{-1pt}N=\oplus_{\nu \in \widehat{N}} \hspace{1pt} K\hspace{-1pt} e_{\nu}$. Hence, $X$ contains all the primitive idempotents of $K\hspace{-1pt}L$. The\vspace{1.5pt} expressions \eqref{Jrwt} and \eqref{J-1rwt} yield that $J^{\pm 1} \in X \otimes_R X$.
\end{proof}

A particular case in which the hypothesis of Theorem \ref{thm:main1} is satisfied is when $M$ itself is contained in a normal abelian subgroup of $G$. Although this does not always happen; see Example \ref{example1}. \par \vspace{2pt}

The next characterization will be useful to prove that, in some situations, $X$ is the unique Hopf order. A warning on notation is first necessary. Since we will simultaneously work with the primitive idempotents of different group algebras, we will specify the group as a superscript to distinguish them.

\begin{proposition}\label{cond:unique}
Retain the hypotheses of Theorem \ref{thm:main1}. Assume furthermore that the natural action of $G/N$ on $N$ induced by conjugation is faithful (equivalently, that $C_G(N)=N$). Consider the Hopf order $X$ of $(K\hspace{-0.69pt}G)_J$ over $R$ constructed above. Let $Y$ be a Hopf order of $(K\hspace{-0.69pt}G)_J$ over $R$. The following assertions are equivalent: \vspace{2pt}
\begin{enumerate}
\item[{\it (i)}] $X=Y$. \vspace{2pt}
\item[{\it (ii)}] $e_{\nu}^N \in Y$ for all $\nu \in \widehat{N}$. \vspace{4pt}
\item[{\it (iii)}] $e_{\varepsilon}^L, e_{\varepsilon}^N \in Y$.
\end{enumerate}
\end{proposition}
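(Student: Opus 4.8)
The plan is to prove the cycle of implications $(i) \Rightarrow (ii) \Rightarrow (iii) \Rightarrow (i)$. The first implication is immediate: $X$ contains every primitive idempotent $e_\nu^N$ of $K\hspace{-1pt}N$ by its very construction (these idempotents are $R$-linear combinations of the generators $e_\nu^N$ with $g=1$), so if $X=Y$ then certainly $e_\nu^N \in Y$ for all $\nu$. The second implication $(ii) \Rightarrow (iii)$ is also essentially immediate: $\varepsilon \in \widehat{N}$, so $e_\varepsilon^N \in Y$ by hypothesis, and $e_\varepsilon^L$ is a sum of primitive idempotents of $K\hspace{-1pt}N$ (since $K\hspace{-1pt}L \subseteq K\hspace{-1pt}N$ and $L \subseteq N$), hence $e_\varepsilon^L \in Y$ as well; here one must be a little careful that $e_\varepsilon^L = \sum_{\nu \in \widehat N,\ \nu|_L = \varepsilon} e_\nu^N$, which is the statement that the idempotent cutting out the trivial $L$-isotypic component is the sum of the $e_\nu^N$ over characters $\nu$ restricting trivially to $L$.

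The substantive implication is $(iii) \Rightarrow (i)$. Assuming $e_\varepsilon^L, e_\varepsilon^N \in Y$, I would first recover \emph{all} primitive idempotents $e_\nu^N$ of $K\hspace{-1pt}N$ inside $Y$. The key point is that $Y$ contains all of $Q$ (indeed all group-like elements of $K\hspace{-1pt}G \subseteq (K\hspace{-1pt}G)_J$ are cocharacters of the twisted Hopf algebra, hence lie in any Hopf order by Proposition \ref{character}(ii); more directly, $G$ consists of group-like elements of $(K\hspace{-1pt}G)_J$ since twisting does not change which grouplikes are there — the elements of $G$ are still grouplike after twisting). Then for any $g \in G$ we have $g\, e_\varepsilon^N\, g^{-1} = e_{g \triangleright \varepsilon}^N = e_\varepsilon^N$ — that conjugate is trivial — so this alone does not suffice. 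Instead, one should exploit faithfulness: since $C_G(N) = N$, the characters $\{g \triangleright \varepsilon\}$ do not exhaust $\widehat N$, so more work is needed. The correct route is to note that $Y \cap K\hspace{-1pt}N$ is a Hopf order of $K\hspace{-1pt}N$ by Proposition \ref{sub} (as $K\hspace{-1pt}N$ is a Hopf subalgebra of $(K\hspace{-1pt}G)_J$ — note $J \in K\hspace{-1pt}N \otimes K\hspace{-1pt}N$, so $K\hspace{-1pt}N$ with the twisted structure is a Hopf subalgebra, but since $N$ is abelian the twisted coproduct on $K\hspace{-1pt}N$ equals the untwisted one, so $Y \cap K\hspace{-1pt}N$ is an ordinary Hopf order of the group algebra $K\hspace{-1pt}N$). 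A Hopf order of $K\hspace{-1pt}N$ containing the idempotent $e_\varepsilon^N$ must, by a standard argument on Hopf orders of commutative semisimple group algebras, contain all the $e_\nu^N$: applying $\Delta$ to $e_\varepsilon^N = \sum_{\eta} e_\eta^N \otimes e_{\eta^{-1}}^N$ and pairing with characters of $K\hspace{-1pt}N$ (which lie in $(Y\cap K\hspace{-1pt}N)^\star$ by Proposition \ref{character}(i)), one extracts each $e_\nu^N$. This gives $e_\nu^N \in Y$ for all $\nu \in \widehat N$.

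Once all primitive idempotents of $K\hspace{-1pt}N$ are in $Y$, together with all of $G$ we get that the $R$-subalgebra generated by $\{e_\nu^N g : \nu \in \widehat N, g \in G\}$, which is exactly $X$, is contained in $Y$; thus $X \subseteq Y$. For the reverse inclusion $Y \subseteq X$, I would pass to duals: $Y^\star \subseteq X^\star$ would follow from $X \subseteq Y$, and then take $\star$ again using Lemma \ref{dual}(ii), but this only re-proves $X \subseteq Y$. Instead, the reverse inclusion should come from comparing discriminants/indices, or more cleanly from the following: $X$ is a Hopf order and $X \subseteq Y$ with both Hopf orders of the same Hopf algebra; one shows $Y \subseteq X$ by checking $Y^\star \subseteq X^\star$, i.e. that every character of $(K\hspace{-1pt}G)_J$ that is $R$-valued on $X$... no — rather, the clean argument is that $X$ was \emph{constructed} so that $(K\hspace{-1pt}G)_J$ with order $X$ is self-dual-compatible: concretely, $X^\star$ is the Hopf order of $(K\hspace{-1pt}G)_J^* \cong$ (a twist of) $K\hspace{-1pt}G^*$ generated by the analogous data, and one checks directly that the generators $e_\nu^N$, $g$ of $X$ each pair into $R$ with the generators of $Y^\star$; equivalently one shows $Y \subseteq X$ by showing $Y$ is spanned over $R$ by elements $e_\nu^N q$ using that $Y \subseteq \bigoplus_{\nu, q} K\, e_\nu^N q$ and that the $e_\nu^N q$-coefficient of any element of $Y$ lies in $R$ (obtained by multiplying by idempotents $e_\nu^N \in Y$ on the left, by $q^{-1} \in Y$ on the right, and reading off a coefficient via a character). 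The main obstacle is this last step — pinning down $Y \subseteq X$ rather than merely $X \subseteq Y$ — and the role of the faithfulness hypothesis $C_G(N)=N$ is precisely to guarantee that the decomposition $X = \bigoplus_{\nu \in \widehat N, q \in Q} R\, e_\nu^N q$ has the right rank (the action of $G/N$ on $\widehat N$ being faithful ensures $X$ has $K$-dimension $|G|\cdot|N|/|N| \cdot$... i.e. that $\{e_\nu^N q\}$ is a genuine $K$-basis of $K\hspace{-1pt}G$), so that an order squeezed between $X$ and $K\hspace{-1pt}G$ and closed under the relevant operations has no room to be strictly larger than $X$ while still being a lattice; I expect to invoke faithfulness exactly here and in the earlier reduction that $Y \cap K\hspace{-1pt}N$ "sees" all of $\widehat N$.
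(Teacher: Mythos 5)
Your skeleton (the cycle (i)$\Rightarrow$(ii)$\Rightarrow$(iii)$\Rightarrow$(i), and the two reductions by multiplying with idempotents and with elements of $Q$) matches the paper, and (i)$\Rightarrow$(ii)$\Rightarrow$(iii) is fine. But the substantive implication (iii)$\Rightarrow$(i) rests on two false claims. First, the elements of $G$ are \emph{not} group-like in $(K\hspace{-0.8pt}G)_J$, nor are they cocharacters of it in general: $\Delta_J(g)=J\Delta(g)J^{-1}\neq g\otimes g$, and by Proposition \ref{Prop:irr} the irreducible cocharacters supported on $K(M\tau M)$ live only on the pairs in $\Rad_\tau$, whereas $\tau$ itself is supported on all of $N_\tau$. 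Second, $J$ does \emph{not} lie in $K\hspace{-1pt}N\otimes K\hspace{-1pt}N$: by Lemma \ref{lemJ} it lies in $K\hspace{-1pt}L\otimes K\widehat{L}$, and only $L$ (not $\widehat{L}\subseteq M$) is assumed to sit inside $N$; consequently $\Delta_J$ restricted to $K\hspace{-1pt}N$ need not be the untwisted coproduct, and your extraction of the $e_\nu^N$ from $e_\varepsilon^N$ by pairing with characters of the group algebra $K\hspace{-1pt}N$ is not justified as stated. This is exactly where the unused hypothesis $e_\varepsilon^L\in Y$ enters in the paper: from $e_\varepsilon^L\in Y$ one gets all $e_\lambda^L=(\lambda^{-1}\otimes id)\Delta(e_\varepsilon^L)\in Y$ (working in the Hopf order $Y\cap K\hspace{-1pt}L$ after first putting $M\subseteq Y$ via $Y\cap K\hspace{-1pt}M$), whence $J^{\pm1}\in Y\otimes_R Y$ by \eqref{Jrwt} and \eqref{J-1rwt}; only then is $Y$ a Hopf order of the \emph{untwisted} $K\hspace{-0.8pt}G$, so that $G\subseteq Y$ and $Y\cap K\hspace{-1pt}N$ is an ordinary Hopf order of $K\hspace{-1pt}N$, giving $X\subseteq Y$.

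For $Y\subseteq X$ you correctly reduce to showing that the coefficient $k_1$ of $e_\nu^N\cdot 1$ in an element $\sum_{q}k_q e_\nu^N q\in Y$ lies in $R$, but ``reading off a coefficient via a character'' does not do this: any $\varphi\in Y^\star$ only yields $\sum_q k_q\varphi(e_\nu^N q)\in R$, a linear combination. The missing mechanism is the paper's iterated-coproduct sandwich: choose generators $\mu_1,\dots,\mu_t$ of $\widehat N$, apply $\Delta^{(t)}$ and multiply on both sides by $e_{\mu_1}^N\otimes\cdots\otimes e_{\mu_t}^N\otimes 1$; faithfulness of the $G/N$-action on $\widehat N$ forces all terms with $q\neq 1$ to vanish (since $q\triangleright\mu_i=\mu_i$ for all $i$ iff $q=1$), leaving $k_1$ times a tensor product of idempotents inside $Y\otimes_R\cdots\otimes_R Y$, so $k_1$ is integral over $R$ and hence in $R$. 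Faithfulness is used precisely to isolate the $q=1$ term, not to control the rank of the lattice.
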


\begin{proof}
(i) $\Rightarrow$ (ii) This is clear by the construction of $X$. \par \vspace{2pt}

(ii) $\Rightarrow$ (iii) The\vspace{0.5pt} map from $\widehat{N}$ to $\widehat{L}$ given by restriction is surjective, and for every $\lambda \in \widehat{L}$, we have:
$$e^L_{\lambda} \hspace{2pt}=\hspace{1pt} \sum_{\begin{subarray}{c} \nu \in \widehat{N} \vspace{1pt} \\ \nu \vert_L = \lambda \end{subarray}} e^N_{\nu}.$$
In particular, $e_{\varepsilon}^L \in Y$. \par \vspace{2pt}

(iii) $\Rightarrow$ (i) We first\vspace{0.5pt} show that $X \subseteq Y$. By Proposition \ref{sub}, $Y \cap (K\hspace{-1pt}M)$ is a Hopf order of $K\hspace{-1pt}M$ over $R$. Proposition \ref{character} yields that $M$ is contained in $Y$. Similarly, $Y \cap (K\hspace{-1pt}L)$ is a Hopf order of $K\hspace{-1pt}L$ over $R$. Notice\vspace{-1pt} that $e_{\lambda}^L=(\lambda^{-1} \otimes id)\Delta(e_{\varepsilon}^L)$ for every $\lambda \in \widehat{L}$. Using Proposition \ref{character} again, we get that $e_{\lambda}^L \in Y$. In\vspace{1pt} view\vspace{-0.5pt} of Equations \ref{Jrwt} and \ref{J-1rwt}, $J$ and $J^{-1}$ belong to $Y \otimes_R Y$. So, $Y$ is\vspace{0.5pt} also a Hopf order of $K\hspace{-1pt}G$ over $R$. By applying once more Proposition \ref{character}, we obtain\vspace{1pt} that $G$ is contained in $Y$. Now, $Y \cap (K\hspace{-1pt}N)$ is a Hopf order of $K\hspace{-1pt}N$ over $R$. Arguing\vspace{-0.5pt} as before, $e_{\nu}^N=(\nu^{-1} \otimes id)\Delta(e_{\varepsilon}^N)$ belongs to $Y$ for every $\nu \in \widehat{N}$. Hence, $X \subseteq Y$. \par \vspace{2pt}

We next show that $Y \subseteq X$. Pick an arbitrary element $y \in Y$ and express it in the following form:
$$y \hspace{3pt}=\hspace{2pt}\sum_{\nu\in\widehat{N}\hspace{-1pt},\hspace{0.5pt}q \in Q} k_{\nu,q} e_{\nu}q, \hspace{2pt}\textrm{ with } k_{\nu,q}\in K.$$
We will prove that $k_{\nu,q} \in R$ for all $\nu \in \widehat{N},q \in Q$. This task admits two reductions. First, by\vspace{1pt} multiplying $y$ with each  primitive idempotent in $K\hspace{-1pt}N$ (which belong to $Y$), it suffices to show that if $\sum_{q \in Q} k_q e _{\nu} q \in Y$, with $k_q \in K$, then $k_q \in R$. Second, by multiplying this new element with the elements in $Q$ (which also belong to $Y$), it suffices to show that $k_1 \in R$. \par \vspace{2pt}

Let $\{\mu_i\}_{i=1}^t$ be\vspace{0.75pt} a set of generators of $\widehat{N}$ as an abelian group. Since $Y$ is a Hopf order of $K\hspace{-1pt}G$ that contains\vspace{1pt} $\{e_{\nu}^N\}_{\nu \in \widehat{N}}$, the element resulting from the following computation will be in $Y \otimes_R \ldots \otimes_R Y$ ($t+1$ times):
$$\begin{array}{l}
{\displaystyle \big(e^N_{\mu_1}\otimes \ldots \otimes e^N_{\mu_t} \otimes 1\big)\Delta^{(t)}\Big(\sum_{q\in Q} k_q e^N_{\nu}q\Big)\big(e^N_{\mu_1}\otimes \ldots \otimes e^N_{\mu_t}\otimes 1\big)} \vspace{6pt} \\
 \hspace{0.8cm}= \hspace{0.1cm} {\displaystyle \sum_{q\in Q} \big(e^N_{\mu_1}\otimes \ldots \otimes e^N_{\mu_t} \otimes 1\big) \bigg(\sum_{\eta_1,\ldots,\eta_t \in \widehat{N}} k_q e^N_{\eta_1}q \otimes \ldots \otimes e^N_{\eta_t}q \otimes e^N_{\nu-\eta_1-\ldots - \eta_t}q\bigg)} \vspace{4pt} \\
  {\displaystyle  \hspace{10.1cm} \big(e^N_{\mu_1}\otimes \ldots \otimes e^N_{\mu_t}\otimes 1\big)} \vspace{8pt} \\
 \hspace{0.8cm}= \hspace{0.1cm} {\displaystyle \sum_{q\in Q} \hspace{0.6mm} \sum_{\eta_1,\ldots, \eta_t \in \widehat{N}\hspace{-1pt}} k_q e^N_{\mu_1} e^N_{\eta_1} e^N_{q \hspace{1pt}\triangleright \mu_1} q \otimes \ldots \otimes e^N_{\mu_t}e^N_{\eta_t} e^N_{q \hspace{1pt}\triangleright \mu_t} q \otimes e^N_{\nu-\eta_1-\ldots -\eta_t}q.}
\end{array}$$

The idempotents $e^N_{\mu_{i}}$'s are orthogonal.\vspace{0.5pt} The only non-zero summands in the above sum will be those in which
$\mu_i=\eta_i= q \triangleright \mu_i$ for every $i=1,\ldots, t$. By hypothesis, the action of $G/N$ on $N$ is faithful. So, the action of $G/N$ on $\widehat{N}$ is faithful\vspace{-0.5pt} as well. Then, $q \triangleright \mu_i=\mu_i$ for all $i$ if and only if $q=1$. The above sum reduces to:
$$k_1 e_{\mu_1}^N \otimes \ldots \otimes e_{\mu_t}^N \otimes e_{\nu-\mu_1-\ldots-\mu_t}^N.$$
This element belongs to $Y \otimes_R \ldots \otimes_R Y$ ($t+1$ times). Hence, it satisfies a monic polynomial with coefficients in $R$. Since the tensor product of idempotents is an idempotent, $k_1$ satisfies a monic polynomial with coefficients in $R$. Therefore, $k_1 \in R$ and we are done.
\end{proof}

We can squeeze a bit more condition (iii) in Proposition \ref{cond:unique}:

\begin{proposition}\label{squeeze}
Retain the hypotheses of Theorem \ref{thm:main1}. In addition, suppose that $L$ generates $N$ as a $G/N$-module. Let $Y$ be a Hopf order of $(K\hspace{-0.69pt}G)_J$ over $R$. Then, $e_{\varepsilon}^L \in Y$ if and only if $e_{\varepsilon}^N \in Y$.
\end{proposition}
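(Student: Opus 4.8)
The plan is to prove the two implications of the equivalence separately, keeping throughout the following preliminary observation in mind. Since $J\in KL\otimes K\widehat{L}$ by Lemma~\ref{lemJ} and the subgroups $L$ and $\widehat{L}$ commute inside the abelian group $M$, the element $J$ commutes with $x\otimes x$ for $x\in L$ and for $x\in\widehat{L}$; hence $KL$, $K\widehat{L}$ and $KM$ are Hopf subalgebras of $(KG)_J$ on which $\Delta_J$ and $S_J$ restrict to the ordinary group-algebra structure. In particular, by Propositions~\ref{sub} and~\ref{character}, \emph{every} Hopf order $Y$ of $(KG)_J$ already contains $M$, and therefore contains $L$ and $\widehat{L}$.

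For the implication ``$e^L_{\varepsilon}\in Y\Rightarrow e^N_{\varepsilon}\in Y$'' --- the one used in the proof of Theorem~\ref{thm:main2} --- I would argue as follows. From $e^L_{\varepsilon}\in Y$ and the fact that $Y\cap KL$ is a Hopf order of $KL$ (Proposition~\ref{sub}), applying the characters $\lambda^{-1}$ of $KL$, which lie in $(Y\cap KL)^{\star}$ by Proposition~\ref{character}, to $\Delta(e^L_{\varepsilon})=\sum_{\eta\in\widehat{L}}e^L_{\eta}\otimes e^L_{\eta^{-1}}$ yields $e^L_{\lambda}=(\lambda^{-1}\otimes\mathrm{id})\Delta(e^L_{\varepsilon})\in Y$ for every $\lambda\in\widehat{L}$. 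Together with $\widehat{L}\subseteq Y$ and the formulas~\eqref{Jrwt},~\eqref{J-1rwt}, this gives $J^{\pm1}=\sum_{\lambda\in\widehat{L}}e^L_{\lambda}\otimes\lambda^{\pm1}\in Y\otimes_R Y$. Since $J^{-1}$ is a twist for $(KG)_J$ with $\big((KG)_J\big)_{J^{-1}}=KG$, Proposition~\ref{twistorder} shows that $Y$ is a Hopf order of $KG$; hence $G\subseteq Y$ by Proposition~\ref{character}, and $Y\cap KN$ is a Hopf order of the commutative group algebra $KN$ by Proposition~\ref{sub}. Now for each $g\in G$ the conjugation automorphism of $KG$ carries $KL$ onto $K(gLg^{-1})$ and the trivial-character idempotent to the trivial-character idempotent, so $g\hspace{1pt}e^L_{\varepsilon}\hspace{1pt}g^{-1}=e^{\,gLg^{-1}}_{\varepsilon}$; this lies in $Y\cap KN$ because $g^{\pm1},e^L_{\varepsilon}\in Y$ and $gLg^{-1}\subseteq N$ ($N$ being normal). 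Finally, in the commutative algebra $KN$ one has $e^{H_1}_{\varepsilon}e^{H_2}_{\varepsilon}=e^{\langle H_1,H_2\rangle}_{\varepsilon}$ for subgroups $H_1,H_2\le N$ --- indeed $e^{H}_{\varepsilon}=\sum_{\nu\in\widehat{N},\,\nu|_{H}=\varepsilon}e^N_{\nu}$, and a character of $N$ is trivial on $\langle H_1,H_2\rangle$ exactly when it is trivial on both $H_i$ --- so that
\[
\prod_{g\in G}e^{\,gLg^{-1}}_{\varepsilon}\;=\;e^{\,\langle\, gLg^{-1}\,:\,g\in G\,\rangle}_{\varepsilon}\;=\;e^N_{\varepsilon},
\]
the last equality being precisely the hypothesis that $L$ generates $N$ as a $G/N$-module (the conjugation action of $G$ on the abelian group $N$ factors through $G/N$). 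As this product lies in $Y\cap KN\subseteq Y$, we conclude $e^N_{\varepsilon}\in Y$.

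For the reverse implication ``$e^N_{\varepsilon}\in Y\Rightarrow e^L_{\varepsilon}\in Y$'' I would again try to promote $Y$ to a Hopf order of $KG$: once that is known, $G\subseteq Y$, the intersection $Y\cap KN$ is a Hopf order of $KN$, each $e^N_{\nu}=(\nu^{-1}\otimes\mathrm{id})\Delta(e^N_{\varepsilon})$ lies in $Y$, and hence $e^L_{\varepsilon}=\sum_{\nu\in\widehat{N},\,\nu|_{L}=\varepsilon}e^N_{\nu}\in Y$. In view of $J^{\pm1}=\sum_{l\in L}l^{\pm1}\otimes e^{\widehat{L}}_{l}$ and $L\subseteq Y$, it is enough for this to get the primitive idempotents of $K\widehat{L}$ into $Y$, i.e. to show $e^{\widehat{L}}_{\varepsilon}\in Y$. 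To extract this from $e^N_{\varepsilon}\in Y$ I would feed $e^N_{\varepsilon}$ through the coproduct of $(KG)_J$: using $J=\sum_{\lambda\in\widehat{L}}e^L_{\lambda}\otimes\lambda$ and the identity $n\,e^N_{\mu}=\mu(n)e^N_{\mu}$ in $KN$, a direct computation gives
\[
\Delta_J(e^N_{\varepsilon})\;=\;\sum_{\mu\in\widehat{N}}e^N_{\mu}\otimes e^{N}_{((\mu|_{L})\,\triangleright\,\mu)^{-1}}\;\in\;Y\otimes_R Y,
\]
and in the summands with $\mu|_{L}=\varepsilon$ the second tensor factor is exactly $e^N_{\mu^{-1}}$. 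Pairing the second leg against suitable elements of $Y^{\star}$ --- built from the characters of $(KG)_J$ and the cocharacters of Propositions~\ref{Prop:irr} and~\ref{vipcharacter}, together with the operations of the Hopf order $Y^{\star}$ --- should single out these $e^N_{\nu}$ with $\nu|_{L}=\varepsilon$, whose sum is $e^L_{\varepsilon}$.

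The step I expect to be the real obstacle is this last one in the reverse implication. Unlike $KL$, $K\widehat{L}$ and $KM$, the subalgebra $KN$ is in general \emph{not} a Hopf subalgebra of $(KG)_J$ (this already fails whenever $M\not\subseteq N$, e.g. in the semidirect-product situation of Theorem~\ref{thm:main2}), so one cannot simply intersect with $KN$ and apply Proposition~\ref{sub} before knowing that $Y$ is a Hopf order of $KG$. Producing enough functionals in $Y^{\star}$ to separate the idempotents $e^N_{\nu}$ --- equivalently, getting $e^{\widehat{L}}_{\varepsilon}$ into $Y$ --- is therefore the crux, and it is where the cocharacter description of Section~3 and the hypothesis that $L$ generates $N$ as a $G/N$-module have to be combined.
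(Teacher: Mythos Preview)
Your forward implication is correct and is essentially the paper's argument: from $e^L_\varepsilon\in Y$ you recover all $e^L_\lambda$, hence $J^{\pm1}\in Y\otimes_R Y$, hence $Y$ is a Hopf order of $KG$, hence $G\subseteq Y$, and then $\prod_{g\in G} g\,e^L_\varepsilon\,g^{-1}=e^N_\varepsilon$ by the generation hypothesis (the paper phrases this via uniqueness of the integral, you via $e^{H_1}_\varepsilon e^{H_2}_\varepsilon=e^{\langle H_1,H_2\rangle}_\varepsilon$; these are equivalent).

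The gap is in the reverse implication, and it comes from a false premise. You assert that $KN$ is in general \emph{not} a Hopf subalgebra of $(KG)_J$, and this leads you to anticipate a hard argument with cocharacters. In fact $KN$ \emph{is} a Hopf subalgebra of $(KG)_J$ under the hypotheses of Theorem~\ref{thm:main1}. Using $J=\sum_\lambda e^L_\lambda\otimes\lambda$ and that $N$ is abelian with $L\subseteq N$, one gets for $n\in N$
\[
\Delta_J(n)=\sum_{\lambda\in\widehat L} e^L_\lambda n\otimes \lambda n\lambda^{-1},
\qquad
S_J(n)=\sum_{\lambda\in\widehat L} e^L_\lambda\,(\lambda^{-1}n^{-1}\lambda),
\]
and both lie in $KN$ (resp.\ $KN\otimes KN$) because $N$ is normal and $\widehat L\subseteq M$ commutes with $L$. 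Thus Proposition~\ref{sub} applies directly: $Y\cap KN$ is a Hopf order of the Hopf subalgebra $KN\subseteq(KG)_J$, and every algebra character $\nu\in\widehat N$ lies in $(Y\cap KN)^\star$. Now your own computation
\[
\Delta_J(e^N_\varepsilon)=\sum_{\mu\in\widehat N} e^N_\mu\otimes e^N_{((\mu|_L)\triangleright\mu)^{-1}}
\]
finishes the job: pair the \emph{first} leg (not the second) with $\nu^{-1}$ to obtain $e^N_{(\nu^{-1}|_L)\triangleright\nu}\in Y$ for every $\nu\in\widehat N$; for those $\nu$ with $\nu|_L=\varepsilon$ this is $e^N_\nu$, and summing over them gives $e^L_\varepsilon\in Y$. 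No cocharacter machinery and no use of the generation hypothesis are needed for this direction.
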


\begin{proof}
Keep\vspace{2pt} the notation of the proof of Theorem \ref{thm:main1}. Bear in mind that $e_{\varepsilon}^N$ is an integral in $K\hspace{-1pt}N$ such that $\varepsilon(e_{\varepsilon}^N)=1$. For $g \in G$, notice that $g \triangleright e_{\varepsilon}^L = ge_{\varepsilon}^Lg^{-1}=e_{\varepsilon}^{gLg^{-1}}$ and $\varepsilon\big(e_{\varepsilon}^{gLg^{-1}}\big)=1$. Consider\vspace{0.5pt} the element $\Lambda:= \prod_{g \in G} e_{\varepsilon}^{gLg^{-1}}$ in $K\hspace{-1pt}N$. That $L$ generates $N$ as a $G/N$-module means\vspace{2pt} that there is a subset $F$ of $Q$ such that $N=\prod_{g \in F}\hspace{1pt} gLg^{-1}$. This implies that $\Lambda$ is an integral in $K\hspace{-1pt}N$. Moreover, $\varepsilon(\Lambda)=1$. By the uniqueness of integrals, it must be $\Lambda=e_{\varepsilon}^N$. \par \vspace{2pt}

Suppose now that $e_{\varepsilon}^L \in Y$. We have\vspace{1pt} seen in the proof of (iii) $\Rightarrow$ (i) in Proposition \ref{cond:unique} that if $e_{\varepsilon}^L \in Y$, then $Y$ is a Hopf order of $K\hspace{-0.69pt}G$\vspace{1pt} and $G \subset Y$. Therefore, $ge_{\varepsilon}^Lg^{-1} \in Y$ and $e_{\varepsilon}^N=\prod_{g \in G} \hspace{0.5pt} ge_{\varepsilon}^Lg^{-1} \in Y$. \par \vspace{2pt}

Conversely, suppose that $e_{\varepsilon}^N \in Y$. Arguing as in the proof of (iii) $\Rightarrow$ (i) in Proposition \ref{cond:unique} with $Y \cap (K\hspace{-1pt}N)$, we get that $e_{\nu}^N=(\nu^{-1} \otimes id)\Delta(e_{\varepsilon}^N)$ belongs to $Y$ for every $\nu \in \widehat{N}$. The same argument of the proof of (ii) $\Rightarrow$ (iii) gives that $e_{\varepsilon}^L \in Y$. \par \vspace{2pt}

Notice that the parts of the proof of Proposition \ref{cond:unique} we just invoked do not require that the action of $G/N$ on $N$ is faithful.
\end{proof}

The following example shows a finite non-abelian group $G$ containing an abelian subgroup $M$ of central type such that:
\begin{enumerate}
\item[(1)] $M$ is not included in a normal abelian subgroup of $G$. \vspace{2pt}
\item[(2)] There is a Lagrangian decomposition $M \simeq L \times \widehat{L}$ such that $L$ is however included in a normal abelian subgroup of $G$.
\end{enumerate}

\begin{example}\label{example1}
Let $p$ be a prime number and $\F_p$ the field with $p$ elements. Consider the subgroup $G$ of $\GL_{2n+2}(\F_p)$ consisting of matrices $(a_{ij})$ defined by the following conditions:
$$a_{11}=a_{2n+2\hspace{1.5pt} 2n+2}=1 \text{ and } a_{i\hspace{0.25pt} 1} = a_{2n+2\hspace{1pt} j}=0 \text{ for } i= 2,\ldots, 2n+2; j=1,\ldots ,2n+1.$$

By forgetting the first and last rows and columns of every matrix, we make $G$ to fit into the following short exact sequence:
$$1\to \Gamma \to G\to \GL_{2n}(\F_p)\to 1.$$
The subgroup $\Gamma$ consists of matrices of the following form: \vspace{2mm}
$$\hspace{1.5cm}\left(\begin{array}{c|ccc|c}
1       & a_{12} & \ldots & a_{1\hspace{0.75pt} 2n+1}            & a_{1\hspace{0.75pt} 2n+2} \\ \hline
0       &         &        &                       & a_{2\hspace{0.75pt} 2n+2} \vspace{-2pt} \\
\vdots  &         &  \hspace{0.3cm}\textrm{{\large Id}}    &        & \vdots \\
0       &         &        &                       & a_{2n+1\hspace{1.5pt} 2n+2} \\ \hline
0       &  0      & \ldots &   0                   & 1
\end{array}\right).\vspace{2mm}$$
Write $V$ for the abelian group $(\F_p^{2n},+)$ and set $V^*= \Hom_{\F_p}(V,\F_p)$. Take a dual basis $\{(x_i,y_i)\}_{i=1}^{2n} \subset  V^* \times V$. We assign to every matrix $(a_{ij})$ in $\Gamma$ the following pair in $V^* \times V:$
$$\big(a_{2\hspace{1pt} 2n+2}\hspace{1pt} x_1+\ldots+a_{2n+1\hspace{1.5pt} 2n+2}\hspace{1pt} x_{2n}\hspace{1pt},\hspace{1pt} a_{12}\hspace{0.5pt}y_1+\ldots +a_{1\hspace{0.75pt} 2n+1}\hspace{0.5pt} y_{2n}\big).$$
This gives us another exact sequence:
$$0\to \F_p \to \Gamma \to V^* \times V\to 0.$$

Write $z$ for a generator of $(\F_p,+)$. Then, $\Gamma$ has the following presentation:
$$\Gamma=\langle x_i,y_i,z : x_i^p=y_i^p=z^p=[x_i,z]=[y_i,z]=[x_i,x_j]=[y_i,y_j]=1, [y_i,x_j]=z^{\delta_{i,j}}\rangle.$$

Here, we view the generators as inside $\Gamma$ as follows: $x_i$ is the elementary matrix with $1$ in the $(i,2n+2)$-entry, $y_i$ is the elementary matrix with $1$ in the $(1,i)$-entry, and $z$ is the elementary matrix with $1$ in the $(1,2n+2)$-entry. \par \vspace{2pt}

Let $M$ be now the subgroup of $G$ generated by $x_{n+1},\ldots, x_{2n},y_1,\ldots,y_n$, which is clearly of central type. Consider a non-degenerate cocycle $\omega: \widehat{M} \times \widehat{M} \rightarrow K^{\times}$ with Lagrangian decomposition defined by
$M=L \times \widehat{L} = \langle x_{n+1},\ldots, x_{2n}\rangle \times \langle y_1,\ldots, y_n\rangle$. \par \vspace{2pt}

We have that $M$ is abelian and it is not contained in a normal abelian subgroup of $G$. (By multiplying with appropriate matrices, one can see that if $M$ were contained in such a group, then $\Gamma$ would be abelian as well, reaching a contradiction.) However, $L$ is contained in the normal abelian subgroup $N:= \langle x_1,\ldots,x_{2n} \rangle \langle z \rangle$. \par \vspace{2pt}

Here, the roles of $L$ and $\widehat{L}$ can be interchanged. We also have that $\widehat{L}$ is contained in the normal abelian subgroup $\langle y_1,\ldots,y_{2n} \rangle \langle z \rangle$. Notice that the two Hopf orders constructed from each one of these normal subgroups are different. This can be seen when trying to express a primitive idempotent of $K\hspace{-1pt}\widehat{L}$ as an $R$-linear combination of the basis $\{e_{\nu}q: \nu \in \widehat{N}, q \in Q\}$.
\end{example}

All examples of integral Hopf orders in twisted group algebras that we know so far are constructed as in the proof of Theorem \ref{thm:main1}. This suggests the following question:

\begin{question}
Let $K$ be a number\vspace{1pt} field with ring of integers $R$. Let $G$ be a finite group and $J$ a twist for $K\hspace{-0.69pt} G$ arising from an abelian subgroup $M$ and a non-degenerate $2$-cocycle on $\widehat{M}$ with values in $K^{\times}$. Suppose that $(K\hspace{-0.69pt} G)_J$ admits a Hopf order $X$ over $R$. Is there a cohomologous twist $\tilde{J}$ to $J$ such that $\tilde{J}^{\pm 1} \in X \otimes_R X$?
\end{question}

The example of Hopf order $X$ of $K\hspace{-1pt} S_4$ in \cite[Proposition 4.1]{CM3} shows that the initial twist $J$ used there does not satisfy $J^{\pm 1} \in X \otimes_R X$, see \cite[Remark 4.2]{CM3}. However, $J$ could be replaced by a cohomologous twist to achieve that condition.

\section{Existence and uniqueness of Hopf orders in twists of certain semidirect products}\label{ex-uniq}

This section also grows out of the above-mentioned example of integral Hopf order for $(K\hspace{-1pt}S_4)_J$. Such a Hopf order had the additional property of being unique. In this section, we examine this property in the framework defined by the hypotheses of Theorem \ref{thm:main1}. For semidirect products of groups, we provide several conditions that guarantee the uniqueness of the Hopf order constructed there. \par \vspace{2pt}

Let $M$ be a finite abelian group. Suppose that $M=LP$, where $L$ and $P$ are subgroups of $M$ such that $L\cap P=\{1\}$ and $L \simeq P$. Fix an isomorphism $f:P \rightarrow \widehat{L}$. It induces a non-degenerate\vspace{-0.75pt} skew-symmetric pairing $\beta_f:\widehat{L} \times \widehat{P} \to K^{\times}$ given by $\beta_f(\lambda,\rho)= \rho(f^{-1}(\lambda))$. Identifying $\widehat{M}$ with $\widehat{P}\widehat{L}$, we get\vspace{1.5pt} the following $2$-cocycle on $\widehat{M}$:
$$\omega(\rho_1\lambda_1,\rho_2\lambda_2)=\beta_f(\lambda_1,\rho_2), \hspace{10pt} \lambda_i \in \widehat{L}, \rho_i \in \widehat{P} \hspace{1pt}\textrm{ for }\hspace{1pt} i=1,2.\vspace{4pt}$$

The twist $J$ in $K\hspace{-1pt} M \otimes K\hspace{-1pt} M$ afforded by $\omega$ takes the following form:
$$J=\sum_{\lambda \in\widehat{L}} \hspace{1pt}\sum_{\rho \in \widehat{P}}\hspace{1pt} \omega(\lambda,\rho)e_{\lambda}\otimes e_{\rho}.$$
We call $J$ the \emph{twist arising from} $f$. The isomorphism $\widehat{f}:L \rightarrow \widehat{P}$ yields a Lagrangian decomposition $\widehat{M} \simeq L \times \widehat{L}$ such that $J$ can be expressed as in \eqref{Jrwt}. \par \vspace{2pt}

The following result, encompassed in Theorem \ref{thm:main1}, supplies more examples of integral Hopf orders in twisted group algebras:

\begin{theorem}\label{thm:main2}
Let $K$ be a (large enough) number field with ring of integers $R$. Consider the semidirect product $G := N \rtimes Q$ of two finite groups $N$ and $Q$, with $N$ abelian. Let $L$ and $P$ be abelian subgroups of $N$ and $Q$, respectively. Set $M=LP$. Let $\tau \in Q$. Suppose that $N,Q,L,P,$ and $\tau$ satisfy the following conditions:
\begin{enumerate}
\item[{\it (i)}] $L$ and $P$ are isomorphic and commute with one another. \vspace{2pt}
\item[{\it (ii)}] $Q$ acts on $N$ faithfully. \vspace{2pt}
\item[{\it (iii)}] $N=L \oplus (\tau \cdot L)$, where $N$ is written additively. \vspace{2pt}
\item[{\it (iv)}] $N^{\tau} \neq \{1\}$. \vspace{2pt}
\item[{\it (v)}] $\widehat{N}^{\sigma \tau}=\big(\widehat{N}^{\tau}\big) \cap \big(\widehat{N}^{\sigma \tau \sigma^{-1}}\big)=\{\varepsilon\}$ for every $\sigma \in P$ with $\sigma \neq 1$.
\end{enumerate}
Let $J$ be the twist in $K\hspace{-1pt}M \otimes K\hspace{-1pt}M$ arising from an isomorphism $f:P \rightarrow \widehat{L}$. Then, $(K\hspace{-0.9pt}G)_J$ admits a unique Hopf order over $R$. This Hopf order is generated, as an $R$-subalgebra, by the primitive idempotents of $K\hspace{-1.1pt}N$ and the elements of $Q$.
\end{theorem}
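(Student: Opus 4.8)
The plan is to show that the hypotheses (i)--(v) of Theorem~\ref{thm:main2} imply the hypotheses of Proposition~\ref{cond:unique} and Proposition~\ref{squeeze}, and then to verify that any Hopf order $Y$ of $(K\hspace{-0.9pt}G)_J$ over $R$ contains the idempotent $e^L_{\varepsilon}$. Once that is done, Proposition~\ref{squeeze} gives $e^N_{\varepsilon} \in Y$ as well (using that $L$ generates $N$ as a $G/N$-module, which follows from condition (iii): $N = L \oplus (\tau \cdot L)$ shows $N$ is generated by $L$ together with the single element $\tau$), and then Proposition~\ref{cond:unique}, condition (iii), forces $Y = X$. So the existence part is immediate from Theorem~\ref{thm:main1} (condition (i) makes $M = LP$ abelian of central type via the cocycle arising from $f$, and $L \subseteq N$ with $N$ normal abelian), and the whole weight of the proof falls on producing $e^L_{\varepsilon}$ inside an arbitrary $Y$.

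To extract $e^L_{\varepsilon}$, the plan is to use the distinguished cocharacter of Section~3. Since the cocycle $\omega$ here is non-degenerate (it arises from an isomorphism $f: P \to \widehat{L}$, hence the associated pairing on $\widehat{M}$ is non-degenerate), Proposition~\ref{vipcharacter} tells us that $|M| e^M_{\varepsilon} \tau e^M_{\varepsilon} \in Y$ for the chosen $\tau \in Q$. I would then analyze the product structure: $e^M_{\varepsilon} = e^L_{\varepsilon} e^P_{\varepsilon}$ (these commute by condition (i) since $L$ and $P$ commute), and I would compute $e^M_{\varepsilon} \tau e^M_{\varepsilon}$ inside $KG$ using the action of $\tau$ on $N$ — here condition (iii), $N = L \oplus (\tau \cdot L)$, is what makes $\tau$ ``spread'' $e^L_{\varepsilon}$ across a complement, and condition (iv), $N^{\tau} \neq \{1\}$, ensures the relevant coefficient does not collapse to zero. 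The idea is that manipulating this element — multiplying by elements of $Q \subseteq Y$ (which lie in $Y$ once we know $Q \subset Y$, obtained from Proposition~\ref{character} applied to the Hopf subalgebra $KQ$), applying coproducts and counits, and pairing against characters via the dual order $Y^{\star}$ as outlined after the statement of the theorem — will isolate a scalar multiple of $e^L_{\varepsilon}$ with a \emph{unit} coefficient, or more precisely will produce $e^L_{\varepsilon}$ itself after dividing out by an integer that is shown to be invertible in $R$ (or by exhibiting $e^L_{\varepsilon}$ as an idempotent satisfying a monic polynomial, as in the argument of Proposition~\ref{cond:unique}).

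Condition (v) is what I expect to be the crux. After forming products such as $\sigma \cdot (e^M_{\varepsilon} \tau e^M_{\varepsilon}) \cdot \sigma^{-1}$ and $e^M_{\varepsilon} \tau e^M_{\varepsilon} \cdot \sigma \tau^{-1}$ for $\sigma \in P$, and summing or multiplying these, cross terms will appear indexed by fixed-point subgroups $\widehat{N}^{\tau}$, $\widehat{N}^{\sigma\tau}$, $\widehat{N}^{\sigma\tau\sigma^{-1}}$; the hypothesis $\widehat{N}^{\sigma\tau} = \widehat{N}^{\tau} \cap \widehat{N}^{\sigma\tau\sigma^{-1}} = \{\varepsilon\}$ for $\sigma \neq 1$ is precisely the orthogonality/transversality statement that kills all unwanted terms, leaving only the ``diagonal'' contribution proportional to $e^L_{\varepsilon}$. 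The main obstacle, then, is the bookkeeping: writing the product $e^M_{\varepsilon} \tau e^M_{\varepsilon}$ explicitly as a combination of elements $e^N_{\nu} \tau e^N_{\nu'}$ (or of $e^N_\nu q$ in the free basis), tracking how conjugation by $P$ and right multiplication by $Q$ permute the indices, and checking that condition (v) forces exactly the cancellations needed so that, after a finite sequence of Hopf-order operations in $Y$ and $Y^{\star}$, one lands on $e^L_{\varepsilon}$ up to a factor that is a unit (or an integer we can cancel by the monic-polynomial trick). I would organize this as a short lemma computing $e^M_{\varepsilon}\tau e^M_{\varepsilon}$ in the free basis, a second lemma showing the relevant idempotent appears with controllable coefficient after combining conjugates, and then assemble: $e^L_{\varepsilon} \in Y \Rightarrow e^N_{\varepsilon} \in Y$ (Proposition~\ref{squeeze}) $\Rightarrow X = Y$ (Proposition~\ref{cond:unique}), where condition (ii) supplies the faithfulness $C_G(N) = N$ needed for Proposition~\ref{cond:unique}.
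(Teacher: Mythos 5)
Your high-level architecture matches the paper's: existence from Theorem~\ref{thm:main1}, and uniqueness reduced via Propositions~\ref{cond:unique} and~\ref{squeeze} to showing $e^L_{\varepsilon}\in Y$, with Proposition~\ref{vipcharacter} supplying the cocharacter $c_\tau=|M|e^M_{\varepsilon}\tau e^M_{\varepsilon}\in Y$ as the starting point, and with conditions (iii)--(v) doing the work you attribute to them. But the heart of the proof --- actually producing $e^L_{\varepsilon}$ from $c_\tau$ --- is left as a plan (``manipulating this element \dots will isolate a scalar multiple of $e^L_\varepsilon$''), and this is not mere bookkeeping. The paper's argument hinges on a specific choice you do not identify: the character $\chi$ of the $G$-representation $\operatorname{Ind}_Q^G(K)$ modulo the trivial summand. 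One forms $E_\tau=(\chi\otimes id)(\Delta_J(c_\tau))\in Y$, shows after a substantial computation (using (iii) to write $\tau$ as a $2\times 2$ block matrix on $N\simeq L\oplus L$, (iv) to guarantee the index set is nonempty, and the first half of (v) to kill the $\sigma\neq 1$ terms in $\chi$) that $E_\tau=\sum_{\sigma\in P}\sum_{\lambda}\sigma\tau e^N_{(\lambda,\lambda)}\sigma^{-1}$, does the same for $E_{\tau^{-1}}$, and then uses the second half of (v) to show the product collapses to $E_\tau E_{\tau^{-1}}=\sum_{\lambda}e^L_\lambda$ (sum over nontrivial $\lambda$ in a certain subgroup of $\widehat{L}$). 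A final pass --- applying $(\varphi\otimes id)\Delta_J$ for each such $\varphi$ and multiplying the resulting translates --- isolates $e^L_\varepsilon$ exactly, with no unit/denominator issues to resolve. Without naming $\chi$ and the product $E_\tau E_{\tau^{-1}}$, the claim that the ``diagonal contribution proportional to $e^L_\varepsilon$'' survives with unit coefficient is an unproved assertion, and it is precisely the part of the theorem where all five hypotheses must be threaded together.

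There is also a concrete error in your outline: you propose to get $Q\subset Y$ from Proposition~\ref{character} applied to ``the Hopf subalgebra $KQ$,'' but $K\hspace{-0.5pt}Q$ is \emph{not} a Hopf subalgebra of $(K\hspace{-0.5pt}G)_J$ --- the twist $J$ lies in $K\hspace{-0.5pt}L\otimes K\hspace{-0.5pt}P$ with $L\subseteq N$, so $\Delta_J(q)=J(q\otimes q)J^{-1}$ leaves $K\hspace{-0.5pt}Q\otimes K\hspace{-0.5pt}Q$ for general $q\in Q$. In the paper, $G\subset Y$ is deduced only \emph{after} $e^L_\lambda\in Y$ is established (which gives $J^{\pm1}\in Y\otimes_R Y$ and hence that $Y$ is a Hopf order of the untwisted $K\hspace{-0.5pt}G$); using elements of $Q$ beforehand is circular. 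What you may legitimately use at the outset is $M=LP\subset Y$ (since $K\hspace{-1pt}M$ is commutative, $(K\hspace{-1pt}M)_J=K\hspace{-1pt}M$ as Hopf algebras and Propositions~\ref{sub} and~\ref{character} apply), which is why the paper's manipulations of $E_\tau$ only ever conjugate by elements of $P$.
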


\begin{proof}
We argued above that $L$ and $f$ provide a Lagrangian decomposition\linebreak $\widehat{M} \simeq L \times \widehat{L}$ such that $J$ can be expressed as in \eqref{Jrwt}. By hypothesis, $L$ is contained in $N$, and $N$ is a normal abelian subgroup of $G$. Theorem \ref{thm:main1} gives that $(K\hspace{-0.9pt} G)_J$ admits a Hopf order $X$ over $R$. The existence is so ensured. Notice that, in this setting, the Hopf order $X$ constructed in the proof of Theorem \ref{thm:main1} is the $R$-submodule generated by the set $\{e_{\nu}^N q: \nu \in \widehat{N},q \in Q\}$. This set is a basis of $K\hspace{-0.9pt} G$ as a $K$-vector space. \par \vspace{2pt}

For the uniqueness, let $Y$ be a Hopf order of $(K\hspace{-0.9pt} G)_J$ over $R$. The idea of the proof is to establish that $e_{\varepsilon}^L$ belongs to $Y$ and then get that $X=Y$ by applying Propositions \ref{cond:unique} and \ref{squeeze}. (Hypotheses (ii) and (iii) are needed to apply such propositions.) Showing that $e_{\varepsilon}^L \in Y$ will require much technical work. \par \vspace{2pt}

Denote\vspace{-1pt} by $V$ the representation $\text{Ind}_Q^G(K)$ modulo the trivial representation $K$. We identify $\text{Ind}_Q^G(K):=K\hspace{-1pt}G \otimes_{K\hspace{-1pt}Q} K$ with $K\hspace{-1pt} N$ as a vector space. Set,\vspace{-2pt} for short, $\widehat{N}^{\bullet}= \widehat{N} \backslash \{\varepsilon\}$. The set $\big\{e_{\nu}: \nu \in \widehat{N}^{\bullet}\big\}$ is a basis of $V$ and the action of $G$ on $V$ is defined by $(nq) \hspace{1pt}\mbox{$\diamond \hspace{-4.255pt} \cdot$}\hspace{3pt} e_{\nu} = (q \triangleright \nu)(n)e_{q \triangleright \nu}$ for all $n \in N,q \in Q$.\vspace{1pt} Here, $(q \triangleright \nu)(n)=\nu(q^{-1}\cdot n)$. Consider\vspace{1pt} the character $\chi:K\hspace{-1pt} G \rightarrow K$ afforded by $V$. It is not difficult to check that $\chi$ is given by:
$$\chi\big(e^N_{\nu}q\big) =
\begin{cases}
\hspace{2pt}1\hspace{3pt} \text{ if }\hspace{2pt} \nu \neq \varepsilon \hspace{2pt} \text{ and }\hspace{2pt} q \triangleright \nu =\nu, \vspace{1pt} \\
\hspace{2pt}0\hspace{3pt} \text{ otherwise.}
\end{cases}\vspace{2pt}$$

We know from Proposition \ref{vipcharacter} that the cocharacter $c_{\tau}:= |M| e^M_{\varepsilon} \tau e^M_{\varepsilon}$ of $(K\hspace{-1pt} G)_J$ belongs to $Y$. The element $E_{\tau}:=(\chi\otimes id)(\Delta_J(c_{\tau}))$ must belong to $Y$ as well in view of Proposition \ref{character}. A large part of the rest of the proof is devoted to finding an appropriate expression for $E_{\tau}$. We start with the following computation:
\begin{align}
E_{\tau} \hspace{5pt} & \hspace{-3pt}\overset{\text{\ding{172}}}{=}\hspace{4pt} |M| \hspace{1pt} \displaystyle{\sum_{\begin{subarray}{l} \lambda_1,\lambda_2 \in \widehat{L} \\ \rho_1,\rho_2 \in \widehat{P}\end{subarray}} \big(\chi \otimes id\big)\bigg(J\Big(e^L_{\lambda_1} e^P_{\rho_1}\tau e^L_{\lambda_2} e^P_{\rho_2} \otimes e^L_{\lambda_1^{-1}}e^P_{\rho_1^{-1}}\tau e^L_{\lambda_2^{-1}}e^P_{\rho_2^{-1}}\Big)J^{-1}}\bigg) \vspace{4pt} \nonumber \\
\hspace{5pt} & \hspace{-3pt}\overset{\text{\ding{173}},\text{\ding{174}}}{=}\hspace{4pt} |M| \hspace{1pt} \displaystyle{\sum_{\begin{subarray}{l} \lambda_1,\lambda_2 \in \widehat{L} \\ \rho_1,\rho_2 \in \widehat{P}\end{subarray}} \hspace{2pt} \frac{\omega(\lambda_1,\rho_1^{-1})}{\omega(\lambda_2,\rho_2^{-1})} \hspace{2pt} \chi \big(e^L_{\lambda_1} e^P_{\rho_1}\tau e^L_{\lambda_2} e^P_{\rho_2}\big) e^L_{\lambda_1^{-1}}e^P_{\rho_1^{-1}}\tau e^L_{\lambda_2^{-1}}e^P_{\rho_2^{-1}}} \vspace{5pt} \nonumber \\
\hspace{5pt} & \hspace{-3pt}\overset{\text{\ding{175}},\text{\ding{174}}}{=}\hspace{4pt} |M| \hspace{2pt}\displaystyle{\sum_{\begin{subarray}{l} \lambda \in \widehat{L} \\ \rho \in \widehat{P}\end{subarray}} \chi\big(e^P_{\rho}\tau e^L_{\lambda}\big)e^L_{\lambda^{-1}}e^P_{\rho^{-1}}\tau e^L_{\lambda^{-1}}e^P_{\rho^{-1}}.} \label{sumetau}
\end{align}
Here, we have used:
\begin{enumerate}
\item[\ding{172}] That $\Delta(e_{\varepsilon}^M)=\sum_{\lambda \in \widehat{L}} \sum_{\rho \in \widehat{P}} \hspace{2pt} e^L_{\lambda} e^P_{\rho} \otimes e^L_{\lambda^{-1}}e^P_{\rho^{-1}}$. \vspace{1pt}
\item[\ding{173}] Definition of $J$. \vspace{1pt}
\item[\ding{174}] That $\{e_{\lambda}^L\}_{\lambda \in \widehat{L}}$ and $\{e_{\rho}^P\}_{\rho \in \widehat{P}}$ are complete sets of orthogonal idempotents in $K\hspace{-0.9pt}L$ and $K\hspace{-0.9pt}P$, respectively. \vspace{1pt}
\item[\ding{175}] That $\chi$ is a character: $\chi(gh)=\chi(hg)$ for all $g,h \in G$. And that $L$ and $P$ commute with one another. \vspace{1pt}
\end{enumerate}

We\vspace{-2pt} next calculate the scalar $\chi\big(e^P_{\rho}\tau e^L_{\lambda}\big)$ occurring in the sum \eqref{sumetau}. We use that $e^L_{\lambda} = \sum_{\begin{subarray}{l} \nu \in \widehat{N} \\ \nu \vert_L = \lambda \end{subarray}} e^N_{\nu}$. We have:
$$\chi\big(e^P_{\rho}\tau e^L_{\lambda}\big) \hspace{1pt} = \hspace{2pt} \frac{1}{|P|}\sum_{\sigma \in P} \sum_{\begin{subarray}{c} \nu \in \widehat{N} \vspace{0.75pt} \\ \nu \vert_L = \lambda \end{subarray}} \rho(\sigma^{-1})\chi\big(e^N_{\nu}\sigma \tau\big).$$
By hypothesis (v), $\sigma \tau$ does not fix non-trivial characters in $\widehat{N}$ when $\sigma \neq 1$. Then, the above equality takes the form:
\begin{equation}\label{Eq1}
\hspace{-4mm}\chi\big(e^P_{\rho}\tau e^L_{\lambda}\big) \hspace{1pt} = \hspace{2pt} \frac{1}{|P|}\sum_{\begin{subarray}{c} \nu \in \widehat{N} \vspace{0.75pt}\\ \nu \vert_L = \lambda \end{subarray}} \chi(e^N_{\nu}\tau) \hspace{1pt} = \hspace{2pt} \frac{1}{|P|} \hspace{1.5pt} \# \big\{\nu \in \widehat{N}^{\bullet} \hspace{1pt}:\hspace{1pt} \nu\vert_L=\lambda \hspace{2pt}\text{ and }\hspace{2pt} \tau \triangleright \nu =\nu\big\}.
\end{equation}

We next find out the value of the right-hand side term in this equation. Put $_{\tau}N=\{\tau \cdot n-n:n\in N\}$. The condition $\tau \triangleright \nu=\nu$ amounts to  $\nu \vert_{_{\tau}\hspace{-1pt}N}=\varepsilon$. Recall that our hypothesis (iii) is that $N=L\oplus (\tau \cdot L)$. This implies that $N=L+{}_{\tau}N$. Observe that the conditions $\tau \triangleright \nu=\nu$ and $\nu\vert_L=\lambda$ define $\nu$ uniquely. And $\nu$ satisfies such conditions if and only if $\lambda\vert_{L\cap\hspace{1pt} {}_{\tau}\hspace{-1pt}N} = \varepsilon$. Equation \ref{Eq1} now reads as:
$$\chi\big(e^P_{\rho}\tau e^L_{\lambda}\big) =
\begin{cases}
\hspace{2pt}\frac{1}{|P|}\hspace{3pt} \text{ if }\hspace{2pt} \lambda\vert_{L\cap\hspace{1pt} {}_{\tau}\hspace{-1pt}N} = \varepsilon \hspace{2pt}\text{ and } \hspace{2pt} \lambda\neq\varepsilon, \vspace{1pt} \\
\hspace{6pt}0\hspace{6pt} \text{ otherwise.}
\end{cases}$$

We return to the sum \eqref{sumetau}. We make the changes of variables $\rho \mapsto \rho^{-1}$ and $\lambda \mapsto \lambda^{-1}$ and substitute the value of $\chi\big(e^P_{\rho}\tau e^L_{\lambda}\big)$. As before, we set $\widehat{L}^{\bullet}= \widehat{L} \backslash \{\varepsilon\}$. We get:
$$E_{\tau} \hspace{1pt} = \hspace{1pt} \frac{|M|}{|P|}\hspace{1pt} \sum_{\rho\in \widehat{P}} \hspace{1pt}\sum_{\begin{subarray}{c} \lambda\in \widehat{L}^{\bullet} \vspace{0.3pt} \\ \lambda\vert_{L \cap \hspace{1pt} {}_{\tau}\hspace{-1pt}N}=\varepsilon \end{subarray}} e^L_{\lambda}e^P_{\rho} \tau e^L_{\lambda}e^P_{\rho}.$$
We simplify further this expression by using the equality:
$$\sum_{\rho\in \widehat{P}} e^P_{\rho}\otimes e^P_{\rho} \hspace{1pt} = \hspace{1pt} \frac{1}{|P|}\sum_{\sigma \in P} \sigma \otimes \sigma^{-1}.$$
We obtain:
\begin{equation}\label{Eq2}
E_{\tau} \hspace{1pt} = \hspace{1pt} \sum_{\sigma \in P} \sigma \Bigg(\sum_{\begin{subarray}{c} \lambda\in \widehat{L}^{\bullet} \vspace{0.3pt} \\ \lambda\vert_{L \cap \hspace{1pt} {}_{\tau}\hspace{-1pt}N} =\varepsilon \end{subarray}} e^L_{\lambda} \tau e^L_{\lambda}\hspace{2pt}\Bigg)\sigma^{-1}.
\end{equation}

The next step is to simplify the sum in the brackets. Since $N=L\oplus (\tau \cdot L)$, we have an isomorphism $L\oplus L\simeq N$ given by $(l_1,l_2)\mapsto l_1+\tau\cdot l_2$. Under this identification, we can write the action of $\tau$ on $N\simeq L\oplus L$ in matrix form as:
$$\tau=
\begin{pmatrix}
0 & \alpha \\
id & \gamma
\end{pmatrix}.$$
Here, $\alpha,\gamma:L \rightarrow L$ are the following group homomorphisms: $\alpha=\pi_L \circ \tau^2$ and $\gamma=\tau^{-1} \circ \pi_{\tau\cdot L} \circ \tau^2$, where $\pi_L:N \rightarrow L$ and $\pi_{\tau\cdot L}:N \rightarrow \tau\cdot L$ are the projections attached to the direct sum $N=L\oplus (\tau\cdot L)$. \par \vspace{2pt}

We will identify $\widehat{N}$ with $\widehat{L}\oplus \widehat{L}$ as well.
Bear in mind that $e^L_{\lambda} = \sum_{\lambda' \in \widehat{L}} e^N_{(\lambda,\lambda')}$. We compute:
$$\begin{array}{rl}
e^L_{\lambda}\tau e^L_{\lambda} & \hspace{-2.5pt} = \hspace{3pt} {\displaystyle \sum_{\lambda_1,\lambda_2\in \widehat{L}} e^N_{(\lambda,\lambda_1)}\tau e^N_{(\lambda,\lambda_2)}} \vspace{4pt} \\
 & \hspace{-2.5pt} = \hspace{3pt} {\displaystyle \sum_{\lambda_1,\lambda_2\in \widehat{L}} \tau e^N_{\tau^{-1} \triangleright (\lambda,\lambda_1)} e^N_{(\lambda,\lambda_2)}} \vspace{4pt} \\
 & \hspace{-2.5pt} = \hspace{3pt} {\displaystyle \sum_{\lambda_1,\lambda_2 \in \widehat{L}} \tau e^N_{(\lambda_1,\lambda\circ \alpha + \lambda_1\circ \gamma)} e^N_{(\lambda,\lambda_2)}} \vspace{4pt} \\
 & \hspace{-2.5pt} = \hspace{3pt} \tau e^N_{(\lambda,\lambda \circ (\alpha+\gamma))}
\end{array}$$
We replace this in Equation \ref{Eq2}. We get:
$$E_{\tau} \hspace{1pt} = \hspace{1pt} \sum_{\sigma \in P}  \sum_{\begin{subarray}{c} \lambda\in \widehat{L}^{\bullet} \vspace{0.3pt} \\ \lambda\vert_{L \cap\hspace{1pt}  {}_{\tau}\hspace{-1pt}N} =\varepsilon \end{subarray}} \sigma \tau e^N_{(\lambda,\lambda \circ (\alpha+\gamma))} \sigma^{-1}.$$

We next describe $L\cap {}_{\tau}N$. For $(l_1,l_2) \in L \oplus L$, we have:
$$(\tau-id) \begin{pmatrix} l_1 \\ l_2\end{pmatrix} = \begin{pmatrix} \alpha(l_2)-l_1 \\ l_1+(\gamma-id)(l_2) \end{pmatrix}.$$
This element belongs to $L$ if and only if $l_1=(id-\gamma)(l_2)$. Then:
$$(\tau-id) \begin{pmatrix} l_1 \\ l_2 \end{pmatrix} = \begin{pmatrix}
(\alpha+\gamma-id)(l_2) \\
0
\end{pmatrix}.$$
We thus see that $\lambda \in \widehat{L}$ is trivial on $L\cap {}_{\tau} N$ if and only if $\lambda \circ (\alpha+\gamma-id)=\varepsilon$; or, equivalently, $\lambda \circ (\alpha+\gamma)=\lambda$. In this case, the\vspace{1pt} character $(\lambda,\lambda)$ in $\widehat{N}$ is $\tau$-invariant. We show\vspace{1pt} that the set of characters $\lambda$ satisfying $\lambda \circ (\alpha+\gamma-id)=\varepsilon$ is not empty. Our hypothesis (iv) states that $N^{\tau}\neq \{1\}$. Then, there is a non-trivial $(l_1,l_2)$ such that:
$$\begin{pmatrix}
0 & \alpha \\
id & \gamma
\end{pmatrix}
\begin{pmatrix}
l_1 \\
l_2
\end{pmatrix} =
\begin{pmatrix}
l_1 \\
l_2
\end{pmatrix}.$$
This means that $\alpha(l_2) = l_1$ and $(\alpha+\gamma-id)(l_2)=0$. The non-triviality of $(l_1,l_2)$ implies that $\alpha+\gamma-id:L \to L$ is not invertible.  Therefore, there is $\lambda \in \widehat{L}$ such that $\lambda \neq \varepsilon$ and $\lambda \circ (\alpha+\gamma-id)=\varepsilon$. \par \vspace{2pt}

Summing this up, we finally arrive at the desired expression for $E_{\tau}$:
$$E_{\tau} \hspace{1pt} = \hspace{1pt} \sum_{\sigma \in P} \sum_{\begin{subarray}{c} \lambda \in \widehat{L}^{\bullet} \vspace{0.3pt} \\ \lambda \circ (\alpha+\gamma)=\lambda \end{subarray}}
\sigma \tau e^N_{(\lambda,\lambda)} \sigma^{-1}.$$

On the other hand, proceeding in a similar fashion with the cocharacter $c_{\tau^{-1}}:=|M|e^M_{\varepsilon}\tau^{-1}e^M_{\varepsilon}$ we obtain the following element:
$$E_{\tau^{-1}} \hspace{1pt} = \hspace{1pt} \sum_{\sigma \in P} \sum_{\begin{subarray}{c} \lambda \in \widehat{L}^{\bullet} \vspace{0.3pt} \\ \lambda \circ (\alpha+\gamma)=\lambda \end{subarray}}
\sigma \tau^{-1} e^N_{(\lambda,\lambda)} \sigma^{-1}.$$

Both elements, $E_{\tau}$ and $E_{\tau^{-1}}$, belong to $Y$. The product $E_{\tau}E_{\tau^{-1}}$ belongs to $Y$ as well. The next step is to calculate $E_{\tau}E_{\tau^{-1}}$:
$$E_{\tau}E_{\tau^{-1}} \hspace{1pt} = \hspace{1pt} \sum_{\sigma_1,\sigma_2 \in P} \hspace{3pt} \sum_{\begin{subarray}{c} \lambda_1 \in \widehat{L}^{\bullet} \vspace{0.3pt} \\ \lambda_1 \circ (\alpha+\gamma)=\lambda_1 \end{subarray}} \hspace{3pt} \sum_{\begin{subarray}{c} \lambda_2 \in \widehat{L}^{\bullet} \vspace{0.3pt} \\ \lambda_2 \circ (\alpha+\gamma)=\lambda_2 \end{subarray}} \sigma_1 \tau e^N_{(\lambda_1,\lambda_1)}\sigma_1^{-1}\sigma_2 \tau^{-1} e^N_{(\lambda_2,\lambda_2)}\sigma_2^{-1}.$$
We use that the character $(\lambda_2,\lambda_2)$ is $\tau$-invariant. We have:
$$E_{\tau}E_{\tau^{-1}} \hspace{1pt} = \hspace{1pt} \sum_{\sigma_1,\sigma_2 \in P} \hspace{3pt} \sum_{\begin{subarray}{c} \lambda_1 \in \widehat{L}^{\bullet} \vspace{0.3pt} \\ \lambda_1 \circ (\alpha+\gamma)=\lambda_1 \end{subarray}} \hspace{3pt} \sum_{\begin{subarray}{c} \lambda_2 \in \widehat{L}^{\bullet} \vspace{0.3pt}\\ \lambda_2 \circ (\alpha+\gamma)=\lambda_2 \end{subarray}} \sigma_1 \tau e^N_{(\lambda_1,\lambda_1)} e^N_{(\sigma_1^{-1}\sigma_2) \hspace{0.5pt} \triangleright \hspace{0.5pt} (\lambda_2,\lambda_2)}\sigma_1^{-1}\sigma_2\tau^{-1}\sigma_2^{-1}.$$
The product\vspace{1.5pt} of the idempotents is non-zero if and only if $(\sigma_1^{-1}\sigma_2) \triangleright (\lambda_2,\lambda_2)=(\lambda_1,\lambda_1)$. This implies that $\big((\sigma_1^{-1}\sigma_2)^{-1} \tau (\sigma_1^{-1}\sigma_2)\big) \triangleright (\lambda_2,\lambda_2)=(\lambda_2,\lambda_2)$. Our hypothesis (v) states\vspace{0.5pt} that $\big(\widehat{N}^{\tau}\big) \cap \big(\widehat{N}^{\sigma \tau \sigma^{-1}}\big)=\{\varepsilon\}$ whenever $\sigma \neq 1$. Then,\vspace{1pt} the only non-zero contributions\vspace{1.5pt} to the previous sum occur when $\sigma_1=\sigma_2$. And,\vspace{1pt} in this case, $\lambda_1=\lambda_2$. We obtain:
$$E_{\tau}E_{\tau^{-1}} \hspace{2pt} = \hspace{4pt} \sum_{\sigma \in P} \hspace{3pt} \sum_{\begin{subarray}{c} \lambda \in \widehat{L}^{\bullet} \vspace{0.3pt} \\ \lambda \circ (\alpha+\gamma)=\lambda \end{subarray}} \sigma e^N_{(\lambda,\lambda)} \sigma^{-1} \hspace{2pt} = \hspace{4pt} \sum_{\sigma \in P} \hspace{3pt} \sum_{\begin{subarray}{c} \lambda \in \widehat{L}^{\bullet} \vspace{0.3pt} \\ \lambda \circ (\alpha+\gamma)=\lambda \end{subarray}}  e^N_{\sigma \hspace{0.5pt} \triangleright \hspace{0.5pt} (\lambda,\lambda)}.$$

Recall that $L$ and $P$ commute by hypothesis (i). For every $l \in L$, we have:
$$\big(\sigma \triangleright (\lambda,\lambda)\big)(l) = (\lambda,\lambda)(\sigma^{-1}l\sigma) = (\lambda,\lambda)(l) = \lambda(l).$$
Then, for every $\sigma \in P$, there is $\mu(\sigma) \in \widehat{L}$ such that $\sigma \triangleright (\lambda,\lambda)=(\lambda,\mu(\sigma))$. Since $(\lambda,\lambda)$ is $\tau$-invariant and non-trivial, it cannot be $\sigma$-invariant for $\sigma \in P$ with $\sigma \neq 1$. Otherwise, $(\lambda,\lambda)$ would be $\sigma \tau$-invariant, contradicting\vspace{-1pt} our hypothesis (v). Hence, $\mu(\sigma_1) \neq \mu(\sigma_2)$ when $\sigma_1\neq \sigma_2$. This yields\vspace{0.5pt} that $\mu(\sigma)$ runs one-to-one over all $\widehat{L}$ when $\sigma$ runs in $P$. Now we get:
$$E_{\tau}E_{\tau^{-1}} \hspace{3pt} = \hspace{2pt} \sum_{\begin{subarray}{c} \lambda \in \widehat{L}^{\bullet} \vspace{0.3pt} \\ \lambda \circ (\alpha+\gamma)=\lambda \end{subarray}} \hspace{3pt} \sum_{\sigma \in P} e^N_{\sigma \hspace{0.5pt} \triangleright \hspace{0.5pt} (\lambda,\lambda)} \hspace{4pt} = \hspace{2pt} \sum_{\begin{subarray}{c} \lambda \in \widehat{L}^{\bullet} \vspace{0.3pt} \\ \lambda \circ (\alpha+\gamma)=\lambda \end{subarray}} \hspace{4pt} \sum_{\lambda' \in \widehat{L}} e^N_{(\lambda,\lambda')} \hspace{4pt} = \hspace{2pt} \sum_{\begin{subarray}{c} \lambda \in \widehat{L}^{\bullet} \vspace{0.3pt} \\ \lambda \circ (\alpha+\gamma)=\lambda \end{subarray}} e_{\lambda}^L.$$

In particular, this gives that $E_{\tau}E_{\tau^{-1}} \in K\hspace{-1pt}L$. Then, $\Delta_J(E_{\tau}E_{\tau^{-1}})=\Delta(E_{\tau}E_{\tau^{-1}})$. Let $\varphi \in \widehat{L}$ be such that $\varphi\circ (\alpha+\gamma)=\varphi$. The next step\vspace{1.5pt} is to compute the element $(\varphi \otimes id)(\Delta_J(E_{\tau}E_{\tau^{-1}}))$. We have:
$$\begin{array}{ll}
(\varphi \otimes id)(\Delta_J(E_{\tau}E_{\tau^{-1}})) & =\hspace{5pt} {\displaystyle \sum_{\begin{subarray}{c} \lambda \in \widehat{L}^{\bullet} \vspace{1pt} \\ \lambda \circ (\alpha+\gamma)=\lambda \end{subarray}} \hspace{3pt} \sum_{\phi \in \widehat{L}} \varphi(e_{\phi}^L) e_{\phi^{-1}\lambda}^L} \vspace{5pt} \\
 & =\hspace{5pt} {\displaystyle \sum_{\begin{subarray}{c} \lambda \in \widehat{L}^{\bullet} \vspace{1pt} \\ \lambda \circ (\alpha+\gamma)=\lambda \end{subarray}} e_{\varphi^{-1}\lambda}^L} \vspace{5pt} \\
 & =\hspace{5pt} {\displaystyle \sum_{\begin{subarray}{c} \lambda \in \widehat{L}\backslash\{\varphi^{-1}\hspace{-0.5pt}\} \vspace{1pt}\\ \lambda \circ (\alpha+\gamma)=\lambda \end{subarray}} e_{\lambda}^L.}
\end{array}$$
In the last equality, we used that the set $\{\lambda \in \widehat{L} : \lambda \circ (\alpha+\gamma)=\lambda\}$ is a subgroup of $\widehat{L}$. By Proposition \ref{sub}, $Y \cap (K\hspace{-1pt}L)$ is a Hopf order of $K\hspace{-1pt}L$ over $R$.\vspace{1.25pt} The element $(\varphi \otimes id)(\Delta_J(E_{\tau}E_{\tau^{-1}}))$ belongs to $Y \cap (K\hspace{-1pt}L)$ in light of Proposition \ref{character}.\vspace{1.25pt} We exploit further the previous equality with the following calculation: \par \vspace{2pt}
$$\prod_{\begin{subarray}{c} \varphi \in \widehat{L}^{\bullet} \vspace{1pt} \\ \varphi \circ (\alpha+\gamma)=\varphi \end{subarray}}(\varphi \otimes id)(\Delta_J(E_{\tau}E_{\tau^{-1}})) \hspace{1pt} = \hspace{1pt} e_{\varepsilon}^L.$$
This yields that $e_{\varepsilon}^L \in Y \cap (K\hspace{-1pt}L)$. Thus, $e_{\varepsilon}^L \in Y$. Propositions \ref{cond:unique} and \ref{squeeze} finally apply to obtain $Y=X$.
\end{proof}

The following consequence of the uniqueness property is noteworthy:

\begin{remark}
The Hopf order $X$ constructed in the above proof is the unique Hopf order of $K\hspace{-1pt} G$ such that $J \in X \otimes_R X$.
\end{remark}

The number of orders in a semisimple Hopf algebra over a number field is finite by \cite[Theorem 1.8]{M}. Theorem \ref{thm:main2} brings to light a different behavior of the number of Hopf orders in twisted group algebras in comparison to that in group algebras on abelian groups. \par \vspace{2pt}

For example, the number of Hopf orders of the group algebra on $C_p,$ with $p$ prime, tends to infinity when the base number field is enlarged in a suitable way, see \cite[Theorem 3 and p. 21]{TO} or the compilation in \cite[Section 3]{CM2}. However, for twisted group algebras, Theorem \ref{thm:main2} shows that such a number can be constantly one. This phenomenon already appeared in our study of the Hopf orders of Nikshych's Hopf algebra, see \cite[Theorem 6.15 and Remark 6.13]{CM2}.

\section{Examples}\label{example}

We illustrate Theorem \ref{thm:main2} with several examples:

\subsection{The basic example}
Let $\F_q$ be the finite field with $q$ elements. For $n \in \Na$, consider the field extension $\F_{q^n}/\F_q$. By fixing a basis of $\F_{q^n}$ as a $\F_q$-vector space, we get a linear isomorphism $\F_{q^n} \simeq \F_q^n$. The product of $\F_{q^n}$ induces an injective algebra homomorphism
$\Phi:\F_{q^n}\to \M_n(\F_q)$. \par \vspace{2pt}

Consider the groups $Q=\SL_{2n}(q)$ and $N=\F_q^{2n}$, where $Q$ acts on $N$ in the natural way. Write $B=\{v_1,\ldots, v_{2n}\}$ for the standard basis of $\F_q^{2n}$ as a $\F_q$-vector space. Let $L$ and $L'$ be\vspace{1.25pt} the subspaces of $\F_q^{2n}$ spanned by $\{v_1,\ldots, v_n\}$ and $\{v_{n+1},\ldots, v_{2n}\}$ respectively. Let $P$ be the following subgroup of $Q$:
$$P=\bigg\{
\begin{pmatrix}
\textrm{Id} & \hspace{-3pt}\Phi(a) \\
0  & \hspace{-3pt}\textrm{Id}
\end{pmatrix} : a \in \F_{q^n}\bigg\}.$$
Finally, we pick in $Q$ the block matrix
$$\tau=\begin{pmatrix}
\textrm{Id} & \hspace{-3pt}0 \\
\textrm{Id} & \hspace{-3pt}\textrm{Id}
\end{pmatrix}.$$
We already have all the prescribed data to apply Theorem \ref{thm:main2}. It is easy to see that $N,Q,L,P,$ and $\tau$ satisfy the hypotheses (i)-(iv). Notice that $N^{\tau}=L'$. \par \vspace{2pt}

We next justify that the hypothesis (v) is also satisfied. First, we check that $\widehat{N}^{\sigma\tau} = \{\varepsilon\}$ for every $\sigma \in P$ with $\sigma \neq 1$. We write $\sigma$ as
$$\sigma =
\begin{pmatrix}
\textrm{Id} & \hspace{-3pt}\Phi(a) \\
0 & \hspace{-3pt}\textrm{Id}
\end{pmatrix},$$
with $a \neq 0$. Then, the following determinant is nonzero:
$$\det(\sigma\tau -1) =
\det \begin{pmatrix}
\Phi(a) & \hspace{-3pt}\Phi(a) \\
    \textrm{Id}  & \hspace{-3pt}0
\end{pmatrix} = \det (\Phi(-a)).$$
The matrix $\sigma\tau-1$ acts on $N$ and this action is invertible. Hence, the action of $\sigma\tau-1$ on $\widehat{N}$ is invertible as well. This yields that $\widehat{N}^{\sigma\tau} = \{\varepsilon\}$. \par \vspace{2pt}

Second, we check that $\widehat{N}^{\tau} \cap \widehat{N}^{\sigma \tau \sigma^{-1}}=\{\varepsilon\}$ for $\sigma$ as before. We can write every element of $\widehat{N}$ as a pair $(\varphi,\psi)$, with $\varphi \in \widehat{L}$ and $\psi \in \widehat{L'}.$ One can easily\vspace{1pt} show that $\widehat{N}^{\tau} = \big\{(\varphi,\varepsilon): \varphi \in \widehat{L}\big\}$ and $\widehat{N}^{\sigma\tau \sigma^{-1}} = \big\{(\varphi, \Phi(-a)^{-1} \triangleright \varphi) : \varphi \in \widehat{L}\big\}$. Since $\Phi(-a)$ is invertible, we obtain that $\widehat{N}^{\tau}\cap \widehat{N}^{\sigma \tau \sigma^{-1}}=\{\varepsilon\}$. \par \vspace{2pt}

We have verified that all the hypotheses of Theorem \ref{thm:main2} hold. We denote the resulting twisted Hopf algebra $\big(K(\F_q^{2n} \rtimes \SL_{2n}(q))\big)_J$ by $H_{q,n}$. The requirement that $K$ is large enough is satisfied in this example if $K$ contains a primitive $p$th root of unity, where $p=\textrm{char}(\F_q)$.
\par \vspace{2pt}

In this way, we constructed an infinite family of semisimple Hopf algebras, parameterized by $q$ and $n$, which admit a unique Hopf order over $R$.

\subsection{Variations of the basic example} \enlargethispage{1.5\baselineskip}
Observe that the previous construction similarly works for $Q=\GL_{2n}(q)$. \par \vspace{2pt}

We next show that it works for $Q=\Sp_{2n}(q)$ as well. Recall that
$$\Sp_{2n}(q)=\bigg\{A \in \GL_{2n}(q): A^t\begin{pmatrix}
0 & \hspace{-3pt}\textrm{Id} \\
-\textrm{Id} & \hspace{-3pt}\textrm{0}
\end{pmatrix} A= \begin{pmatrix}
0 & \hspace{-3pt}\textrm{Id} \\
-\textrm{Id} & \hspace{-3pt}\textrm{0}
\end{pmatrix}\bigg\}.$$
Consider\vspace{0.25pt} the bilinear form $\mathcal{T}:\F_{q^n} \times \F_{q^n} \rightarrow \F_q$ defined by the trace of the field extension $\F_{q^n}/\F_q$. We know\vspace{0.5pt} that this form is non-degenerate and symmetric. (When $q$ is even, we also know that the map $\F_{q^n}\rightarrow \F_q, x \mapsto \textrm{Tr}_{\F_{q^n}/\F_q}(x^2)=\textrm{Tr}_{\F_{q^n}/\F_q}(x)^2$, is nonzero.) A result from\vspace{0.25pt} the theory of bilinear forms ensures the existence of an orthogonal\vspace{0.12pt} basis for $\F_{q^n}$ as a $\F_q$-vector space. Expressing the product of $\F_{q^n}$ with respect to this basis,\vspace{0.12pt} we get an algebra homomorphism $\Psi:\F_{q^n}\to \M_n(\F_q)$ with the additional property that $\Psi(a)$ is symmetric for all $a \in \F_q$. Now, set
$$P=\Bigg\{
\begin{pmatrix}
\textrm{Id} & \hspace{-3pt}\Psi(a) \\
0  & \hspace{-3pt}\textrm{Id}
\end{pmatrix} : a \in \F_{q^n}\Bigg\}.$$
One can easily check that $P \subset Q$ and $\tau \in Q$. The hypotheses (i)-(iv) of Theorem \ref{thm:main2} are likewise satisfied. For\vspace{-1pt} the hypothesis (v), one can argue as before to check the required conditions on the invariant subsets of $\widehat{N}$. Or,\vspace{1pt} alternatively, use that $\Psi(a)=D\Phi(a)D^{-1}$ for all $a \in \F_{q^n}$, where $D$ is an invertible matrix.

\subsection{Composition of basic examples}
Let $n_1,n_2,\ldots,n_k \in \Na$ be such that $n=\sum_i n_i$. For a fixed $q$, consider the family of Hopf algebras $H_{q,n_i}$ and the tensor product Hopf algebra
$$\bigotimes_i H_{q,n_i} \simeq K\Big(\prod_i \big(\F_q^{2n_i} \rtimes \SL_{2n_i}(q)\big)\Big)_{\otimes J_i} \simeq K\Big(\F_q^{2n} \rtimes \Big(\prod_i \SL_{2n_i}(q)\Big)\Big)_{\Upsilon},$$
where we write $\Upsilon=\otimes J_i$. Let $X_{q,n_i}$ denote the unique Hopf order of $H_{q,n_i}$. Then, $Y:=\otimes_i \hspace{1pt} X_{q,n_i}$ is a Hopf order of $K\hspace{-1.25pt}\big(\F_q^{2n} \rtimes (\prod_i \SL_{2n_i}(q))\big)_{\Upsilon}$. Since $Y$ contains all the primitive idempotents of $K\F_q^{2n}$, we have that $\Upsilon \in Y\otimes_R Y$ and $Y$ is unique by Proposition \ref{cond:unique}. \par \vspace{2pt}

The diagonal embedding $\prod_i \SL_{2n_i}(q) \hookrightarrow \SL_{2n}(q)$ induces an embedding of Hopf algebras
$$\bigotimes_i H_{q,n_i} \hookrightarrow K\Big(\F_q^{2n} \rtimes \SL_{2n}(q)\Big)_{\Upsilon}.$$
By intersecting with $\otimes_i \hspace{1pt} H_{q,n_i}$, we see that any Hopf order $X$ of $K\hspace{-1.25pt}\big(\F_q^{2n} \rtimes \SL_{2n}(q)\big)_{\Upsilon}$ contains all the primitive idempotents of $K\F_q^{2n}$ and satisfies $\Upsilon \in X\otimes_R X$. Then, $X$ is also a Hopf order of the group algebra $K\hspace{-1.25pt}\big(\F_q^{2n} \rtimes \SL_{2n}(q)\big)$. Theorem \ref{thm:main1} and Proposition \ref{cond:unique} show that $K\hspace{-1.25pt}\big(\F_q^{2n} \rtimes \SL_{2n}(q)\big)_{\Upsilon}$ has a unique Hopf order.

\section{An application}\label{app}

In \cite[Section 5]{CM3}, we posed the following question:

\begin{question}
Let $G$ be a finite non-abelian simple group. Let $\Omega$ be a non-trivial twist for $\Co G$ arising from a $2$-cocycle on an abelian subgroup of $G$. Can $(\Co G)_{\Omega}$ admit a Hopf order over a number ring?
\end{question}

The results obtained so far in \cite{CCM} and \cite[Theorem 3.3]{CM3} support a negative answer. In this final section, we provide one more instance of a partial negative answer through $\PSL_{2n+1}(q)$. The strategy of proof deployed here differs from that in \cite{CCM} and \cite[Theorem 3.3]{CM3}. Here, we embed  $H_{q,n}$ in a twist of $K \PSL_{2n+1}(q)$ and exploit the concrete form of the unique Hopf order of $H_{q,n}$. Compared with \cite[Theorem 6.3]{CCM}, we note that the following proof is constructive and does not rely either on the classification of the finite simple groups or on that of the minimal simple groups.

\begin{theorem}\label{PSL2n1}
Let $K$ be a number field with ring of integers $R$. Let $p$ be a prime number and $q=p^m$ with $m \geq 1$. Assume that $K$ contains a primitive $p$th root of unity $\zeta$. There exists a twist $J$ for the group algebra $K \PSL_{2n+1}(q)$, arising from a $2$-cocycle on an abelian subgroup, such that $(K \PSL_{2n+1}(q))_J$ does not admit a Hopf order over $R$.
\end{theorem}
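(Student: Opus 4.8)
The strategy is to leverage the embedding $\F_q^{2n} \rtimes \SL_{2n}(q) \hookrightarrow \PSL_{2n+1}(q)$ so that the twist $J$ constructed for $H_{q,n}$ becomes a twist for $K\PSL_{2n+1}(q)$, and then argue by contradiction: if a Hopf order $Y$ of $(K\PSL_{2n+1}(q))_J$ existed, then intersecting with the Hopf subalgebra $(K(\F_q^{2n}\rtimes\SL_{2n}(q)))_J = H_{q,n}$ (using Proposition~\ref{sub}) would yield a Hopf order of $H_{q,n}$, which by Theorem~\ref{thm:main2} must be the explicit unique order $X$ generated by the primitive idempotents of $K\F_q^{2n}$ and the elements of $\SL_{2n}(q)$. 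The contradiction will be extracted from a numerical/arithmetic obstruction: some element that is forced to lie in $Y$ (a cocharacter of $(K\PSL_{2n+1}(q))_J$, or a product of evaluations as in the proof of Theorem~\ref{thm:main2}) cannot be integral, because the denominators appearing — powers of $p$, or the index $|\PSL_{2n+1}(q)|$-type factors — are not invertible in $R$ given that $\zeta$ is a primitive $p$th root of unity and $R$ has residue characteristic $p$ at some prime.

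\smallskip

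First I would set up the embedding carefully: realize $\F_q^{2n}\rtimes\SL_{2n}(q)$ inside $\GL_{2n+1}(q)$ as the stabilizer-type subgroup of matrices with a distinguished fixed vector (block-upper-triangular with a $\SL_{2n}$ block and a column), check that its image in $\PSL_{2n+1}(q)$ is isomorphic to it (the center of $\SL_{2n+1}(q)$ meets this subgroup trivially, since scalar matrices with the required block shape are trivial), and conclude that $K(\F_q^{2n}\rtimes\SL_{2n}(q))$ is a Hopf subalgebra of $K\PSL_{2n+1}(q)$. Since the twist $J$ lives in $K M\otimes KM$ with $M$ an abelian subgroup of $\F_q^{2n}\rtimes\SL_{2n}(q)$, it is also a twist for $K\PSL_{2n+1}(q)$, and $(K(\F_q^{2n}\rtimes\SL_{2n}(q)))_J = H_{q,n}$ is a Hopf subalgebra of $(K\PSL_{2n+1}(q))_J$. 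Then, assuming a Hopf order $Y$ exists, Proposition~\ref{sub} gives that $Y\cap H_{q,n}$ is a Hopf order of $H_{q,n}$, hence equals $X_{q,n}$ by the uniqueness in Theorem~\ref{thm:main2}.

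\smallskip

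The heart of the argument is then to show this is impossible. The cleanest route is to produce an element of $(K\PSL_{2n+1}(q))_J$ that must lie in $Y$ by Proposition~\ref{character} (a cocharacter, e.g.\ of the form $|M|e^M_\varepsilon \tau' e^M_\varepsilon$ for a well-chosen $\tau'\in\PSL_{2n+1}(q)$ via Proposition~\ref{vipcharacter}), and whose restriction, via $Y\cap H_{q,n} = X_{q,n}$ together with the explicit $R$-basis $\{e^N_\nu q : \nu\in\widehat N, q\in Q\}$ of $X_{q,n}$, forces a scalar with a factor of $1/p$ that cannot be absorbed. Concretely I expect to pick an element $\tau'$ that does \emph{not} normalize $\F_q^{2n}\rtimes\SL_{2n}(q)$ — using the larger ambient $\PSL_{2n+1}(q)$ to access new double cosets of $M$ — and compute the corresponding cocharacter or an evaluation $(\chi\otimes\mathrm{id})\Delta_J(c_{\tau'})$; when expanded in terms of the primitive idempotents $e^N_\nu$ and group elements, the coefficients will involve $|M|^{-1}$ or $|P|^{-1} = q^{-n} = p^{-mn}$, contradicting integrality since $p$ is not a unit in $R$ (indeed $p$ divides $1-\zeta$ up to a unit). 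Alternatively, one can mimic the $E_\tau E_{\tau^{-1}}$ computation of Theorem~\ref{thm:main2} inside the bigger group and show the analogous product lands outside $X_{q,n}\otimes_R X_{q,n}$-range.

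\smallskip

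\textbf{Main obstacle.} The delicate point is the bookkeeping in the second and third steps: one must verify that the embedding is compatible with the coalgebra structures of the twists (so that $H_{q,n}$ really is a Hopf \emph{subalgebra} of $(K\PSL_{2n+1}(q))_J$, not merely a subalgebra), and — more substantially — one must identify a specific element $\tau'\in\PSL_{2n+1}(q)$ whose associated cocharacter or evaluation, when pushed down to $X_{q,n}$ via the explicit basis, genuinely exhibits a $p$-power denominator. This requires understanding how double cosets $M\tau' M$ in $\PSL_{2n+1}(q)$ that are \emph{not} contained in $\F_q^{2n}\rtimes\SL_{2n}(q)$ interact with the idempotent $e^M_\varepsilon$, and checking that the resulting coefficients are not accidentally cleared of denominators. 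I expect the chosen $\tau'$ to be (the image of) a permutation-type matrix cycling the distinguished vector into the $2n$-dimensional block, so that $M\cap\tau' M\tau'^{-1}$ is small and Proposition~\ref{Prop:irr} forces the cocharacter to have the full factor $\sqrt{|N_{\tau'}||\Rad_{\tau'}|}$, hence the $|M|e^M_\varepsilon\tau' e^M_\varepsilon$ a genuine $1/|M|$-free integral multiple that nonetheless, after restriction through $X_{q,n}$, cannot be written over $R$.
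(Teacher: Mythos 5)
Your setup is the same as the paper's: embed $\F_q^{2n}\rtimes\SL_{2n}(q)$ in $\PSL_{2n+1}(q)$, transport $J$, assume a Hopf order $Y$ exists, and use Proposition~\ref{sub} together with the uniqueness in Theorem~\ref{thm:main2} to conclude $Y\cap H_{q,n}=X_{q,n}$. That part is fine. The gap is in the heart of the argument, where you propose to derive the contradiction from a cocharacter $|M|e^M_{\varepsilon}\tau' e^M_{\varepsilon}$ (or a similar evaluation) "exhibiting a $p$-power denominator." This mechanism cannot work as stated: by Proposition~\ref{character}(ii) every cocharacter automatically lies in \emph{every} Hopf order, so no cocharacter can witness non-existence; and more fundamentally, having coefficients with denominator $p$ when expanded in the basis $\{e^N_{\nu}q\}$ or in the group-element basis is not by itself an obstruction, because a Hopf order of $(K\PSL_{2n+1}(q))_J$ is under no obligation to be contained in $X_{q,n}$ or in $R[\PSL_{2n+1}(q)]$ --- it only has to be a lattice closed under the operations. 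To get a contradiction you must exhibit an element that is forced into $Y$ and is provably \emph{not integral over $R$}, i.e.\ satisfies no monic polynomial over $R$; "has a $1/p$ in its coordinates" does not establish that.

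The paper supplies exactly this missing step. Since $X_{q,n}\subseteq Y$ contains the primitive idempotents of $K\hspace{-1pt}N$, one has $J^{\pm1}\in Y\otimes_R Y$, so $Y$ is also a Hopf order of the \emph{untwisted} $K\PSL_{2n+1}(q)$ and hence contains all group elements. Conjugating the elements $\frac1q\sum_{r\in\F_q}\chi(r)u_{\pi(x_{1\,2n+1}(r))}\in Y$ by group elements and summing over characters trivial on $\F_p$ places $\frac1p\sum_{r\in\F_p}u_{\pi(x_{ij}(r))}$ in $Y$ for every root subgroup; the product of two such elements lies in the group algebra of a Heisenberg group of order $p^3$, and under its $p$-dimensional irreducible representation it maps to $\frac1p\sum_{i=1}^{p}E_{i1}$, a matrix $A$ with $A^2=\frac1p A$, whose nonzero eigenvalue $1/p$ shows it is not integral over $R$. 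This representation-theoretic evaluation is the essential idea your proposal lacks; without it (or some equivalent integrality test) the plan does not close. Incidentally, the final contradiction lives entirely inside $\SL_{2n+1}$ root subgroups and does not require choosing a $\tau'$ outside the normalizer of $\F_q^{2n}\rtimes\SL_{2n}(q)$ or analyzing new double cosets of $M$, so that part of your plan is also a detour.
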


\begin{proof}
Let $\pi:\SL_{2n+1}(q) \rightarrow \PSL_{2n+1}(q)$ denote\vspace{0.75pt} the canonical projection. As in Section \ref{example}, write $N=\F_q^{2n}$. We have an embedding $\iota:N \rtimes \SL_{2n}(q) \rightarrow \PSL_{2n+1}(q)$ given by:
$$(v,A) \mapsto \pi \hspace{-2pt}\begin{pmatrix} A & v \\ 0 & 1 \end{pmatrix}.$$
We identify $N \rtimes \SL_{2n}(q)$ with its image through $\iota$ and view it as a subgroup of $\PSL_{2n+1}(q)$. The twist $J$ for $K(N \rtimes \SL_{2n}(q))$ used in the construction of $H_{q,n}$ allows us to twist $K\PSL_{2n+1}(q)$ as well. Thus, we can consider $H_{q,n}$ as a Hopf subalgebra of $(K\PSL_{2n+1}(q))_J$. \par \vspace{2pt}

We will next prove that $(K\PSL_{2n+1}(q))_J$ does not admit a Hopf order over $R$ by contradiction. Suppose that $(K\PSL_{2n+1}(q))_J$ admits a Hopf order $Y$ over $R$. Then, $Y \cap H_{q,n}$ is a Hopf order of $H_{q,n}$ over $R$. We saw in Section \ref{example} that $H_{q,n}$ admits a unique Hopf order $X$ over $R$. So,  $X=Y \cap H_{q,n}$. And we know that $J \in X \otimes_R X$. This implies that $J \in Y \otimes_R Y$ and that $Y$ is a Hopf order of $K\PSL_{2n+1}(q)$. \par \vspace{2pt}

We write $u_{\pi(B)}$ for the element $\pi(B) \in \PSL_{2n+1}(q)$ when viewed in the group algebra $K\PSL_{2n+1}(q)$. For $r \in \F_q$, let $x_{ij}(r)$ denote the elementary matrix in $\SL_{2n+1}(q)$ with $r$ in the $(i,j)$-entry. Recall that $X$ (and consequently $Y$) contains the primitive idempotents of $K\hspace{-1pt} N$. Then, for a character $\chi:\F_q \to K^{\times}$ of $(\F_q,+)$, the element
$$\frac{1}{q}\sum_{r \in \F_q} \chi(r) u_{\pi(x_{1\hspace{0.33pt}2n+1}(r))}$$
belongs to $Y$. Since $\PSL_{2n+1}(q) \subset Y$, we can multiply that element by elements of $\PSL_{2n+1}(q)$ and produce other elements in $Y$. In particular, $Y$ contains the element
$$\frac{1}{q}\sum_{r \in \F_q} \chi(r) u_{\pi(x_{ij}(r))}.$$
Consider now $(\F_p,+)$ as a subgroup of $(\F_q,+)$. Summing over all characters $\chi$ such that $\chi \vert_{\F_p}$ is trivial, we get that
$$\frac{1}{p}\sum_{r \in \F_p} u_{\pi(x_{ij}(r))}$$
belongs to $Y$. \par \vspace{2pt}

Put $g_1=\pi(x_{12}(1)), g_2=\pi(x_{13}(1)),$ and $g_3=\pi(x_{23}(1))$. The subgroup generated by $g_1,g_2,$ and $g_3$ is a Heisenberg group of order $p^3$. The previous arguments show that $Y$ contains the element
$$\frac{1}{p^2}\sum_{r,s=0}^{p-1} u_{g_1^{r}g_3^{s}} = \Big(\frac{1}{p}\sum_{r \in \F_p} u_{\pi(x_{12}(r))}\Big)\Big(\frac{1}{p}\sum_{s \in \F_p} u_{\pi(x_{23}(s))}\Big).$$

Consider the $p$-th dimensional irreducible representation of the Heisenberg group given by:
$$\begin{array}{l}
g_1 \mapsto  E_{12} + E_{23} + \ldots + E_{p-1 p} + E_{p1}, \vspace{3pt} \\
g_2 \mapsto \textrm{diag}(\zeta,\zeta,\ldots,\zeta), \vspace{3pt} \\
g_3 \mapsto \textrm{diag}(1,\zeta,\ldots, \zeta^{p-1}).
\end{array}$$
Here, $E_{ij}$ denotes\vspace{1.5pt} the matrix in $\M_p(K)$ with $1$ in the $(i,j)$-entry and zero elsewhere. A direct calculation reveals that $\frac{1}{p^2}\sum_{r,s} u_{g_1^{r}g_3^{s}}$ maps to $\frac{1}{p}\sum_{i=1}^{p} E_{i1}$. Since $\frac{1}{p^2}\sum_{r,s} u_{g_1^{r}g_3^{s}}$ belongs to $Y$, it satisfies\vspace{1.5pt} a monic polynomial with coefficients in $R$. However, $\frac{1}{p}\sum_{i=1}^{p} E_{i1}$ does not, reaching so a contradiction.
\end{proof}

The statement for the complexified group algebra is established as in the proof of \cite[Corollary 2.4]{CM1}:

\begin{corollary}
There exists a twist $J$ for the group algebra $\Co \PSL_{2n+1}(q)$, arising from a $2$-cocycle on an abelian subgroup, such that the complex semisimple Hopf algebra $(\Co \PSL_{2n+1}(q))_J$ does not admit a Hopf order over any number ring.
\end{corollary}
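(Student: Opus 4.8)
The plan is to reduce the statement to Theorem \ref{PSL2n1} by the descent (``spreading out'') argument used in the proof of \cite[Corollary 2.4]{CM1}. The key point is that the twist $J$ of Theorem \ref{PSL2n1} is already defined over a cyclotomic field: the Movshev construction of Section \ref{example} makes sense over any field containing a primitive $p$th root of unity $\zeta$, so $J \in K_0\PSL_{2n+1}(q) \otimes_{K_0} K_0\PSL_{2n+1}(q)$ with $K_0 := \Q(\zeta)$, and therefore $(\Co\PSL_{2n+1}(q))_J = (K\PSL_{2n+1}(q))_J \otimes_K \Co$ for every number field $K$ with $K_0 \subseteq K$.

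Suppose, towards a contradiction, that $(\Co\PSL_{2n+1}(q))_J$ admits a Hopf order $\Lambda$ over a number ring $R_0$, i.e. $\Lambda$ is an $R_0$-Hopf subalgebra of $(\Co\PSL_{2n+1}(q))_J$ with $\Lambda \otimes_{R_0}\Co \cong (\Co\PSL_{2n+1}(q))_J$. First I would fix a finite set $x_1,\dots,x_k$ of $R_0$-module generators of $\Lambda$ and record the structure constants expressing the Hopf operations back in terms of the $x_i$: identities $x_ix_j = \sum_l b^{ij}_l x_l$, $\Delta_J(x_i) = \sum_{j,l} a^i_{jl}\, x_j \otimes x_l$, $S_J(x_i) = \sum_l s^i_l x_l$, $1 = \sum_l u_l x_l$, all with coefficients in $R_0$, together with $\varepsilon_J(x_i) \in R_0$. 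Writing $x_i = \sum_g c_{i,g}\, g$ with $c_{i,g}\in\Co$, let $K$ be the number field generated over $K_0$ by $\mathrm{Frac}(R_0)$ and the finitely many scalars $c_{i,g}$, and set $R := \mathcal{O}_K$; then $R_0 \subseteq R$ and $x_1,\dots,x_k$ lie in $K\PSL_{2n+1}(q)$.

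Next I would verify that $X := \sum_i R x_i \subseteq (K\PSL_{2n+1}(q))_J$ is a Hopf order over $R$. It is finitely generated and torsion-free over the Dedekind domain $R$, hence projective; the $x_i$ span $(\Co\PSL_{2n+1}(q))_J$ over $\Co$ and lie in $K\PSL_{2n+1}(q)$, so they span $K\PSL_{2n+1}(q)$ over $K$ and the natural map $X \otimes_R K \to (K\PSL_{2n+1}(q))_J$ is an isomorphism, i.e. $X$ is a full lattice; and, since $J$ and the $x_i$ are defined over $K$, the algebra, coalgebra and antipode structure of $(\Co\PSL_{2n+1}(q))_J$ restricts to that of $(K\PSL_{2n+1}(q))_J$, so the displayed identities (whose coefficients lie in $R_0 \subseteq R$) show that $X$ is closed under multiplication, comultiplication, counit, unit and antipode. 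Thus $X$ is a Hopf order of $(K\PSL_{2n+1}(q))_J$ over $R$, and since $\zeta \in K$ this contradicts Theorem \ref{PSL2n1}. Hence $(\Co\PSL_{2n+1}(q))_J$ admits no Hopf order over any number ring.

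I expect no conceptual difficulty; the only point requiring care is the bookkeeping in the second step, namely that the structure constants $a^i_{jl}, b^{ij}_l, s^i_l, u_l$ may genuinely be taken in $R_0$ — this is precisely the hypothesis that $\Lambda$ is an $R_0$-subalgebra and subcoalgebra, and it is what forces the enlarged lattice $X$ to be closed over $R$ — together with the (automatic) observation that $(K\PSL_{2n+1}(q))_J$ is the correct $K$-form, since $J$ is already defined over $K_0 \subseteq K$.
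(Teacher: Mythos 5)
Your overall strategy --- reduce to Theorem \ref{PSL2n1} by descending a hypothetical Hopf order over a number ring to a Hopf order of $(K\PSL_{2n+1}(q))_J$ over $\mathcal{O}_K$ for a suitable number field $K \supseteq \Q(\zeta)$ --- is exactly the route the paper takes (it simply cites the proof of \cite[Corollary 2.4]{CM1}). However, there is a genuine gap in your second step. You write $x_i=\sum_g c_{i,g}\,g$ with $c_{i,g}\in \Co$ and then ``let $K$ be the number field generated over $K_0$ by $\mathrm{Frac}(R_0)$ and the finitely many scalars $c_{i,g}$.'' Nothing in the hypothesis forces the $c_{i,g}$ to be algebraic over $\Q$: a Hopf order is only required to be an $R_0$-sublattice of $(\Co\PSL_{2n+1}(q))_J$ closed under the Hopf operations, and its embedding into the complex group algebra may involve transcendental coordinates with respect to the group basis. (Already at the level of algebras this happens: a $\Q$-form of $\M_n(\Co)$ can sit inside $\M_n(\Co)$ as the conjugate of $\M_n(\Q)$ by a transcendental matrix. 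The cocharacters of Proposition \ref{character}, which do lie in $\Lambda$, pin down only a small part of the lattice and do not rescue algebraicity of all the $c_{i,g}$.) If some $c_{i,g}$ is transcendental, your $K$ is not a number field, $R=\mathcal{O}_K$ is not a number ring, and the contradiction with Theorem \ref{PSL2n1} evaporates.

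The standard repair --- and what the cited proof of \cite[Corollary 2.4]{CM1} actually does --- is to argue with forms rather than with coordinates. Set $K_1=\mathrm{Frac}(R_0)$ and $H_1=\Lambda\otimes_{R_0}K_1$; this is a finite-dimensional Hopf algebra over the number field $K_1$ with $H_1\otimes_{K_1}\Co\simeq(\Co\PSL_{2n+1}(q))_J$. Since $(\overline{\Q}\,\PSL_{2n+1}(q))_J$ is another form of the same complex Hopf algebra and both are defined over $\overline{\Q}$, the (nonempty, affine) variety of Hopf algebra isomorphisms between $H_1\otimes_{K_1}\overline{\Q}$ and $(\overline{\Q}\,\PSL_{2n+1}(q))_J$ has a $\Co$-point and hence a $\overline{\Q}$-point; such an isomorphism is defined over some finite extension $K$ of $K_1$ containing $\Q(\zeta)$. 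Transporting $\Lambda\otimes_{R_0}\mathcal{O}_K$ along this isomorphism produces a Hopf order of $(K\PSL_{2n+1}(q))_J$ over $\mathcal{O}_K$, and only then does Theorem \ref{PSL2n1} give the contradiction. Your bookkeeping of structure constants is fine and is indeed the part that makes the transported lattice a Hopf order; the missing ingredient is the descent of the isomorphism to $\overline{\Q}$, which cannot be replaced by adjoining the coefficients $c_{i,g}$.
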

\medskip

\end{document}